\newtheorem{theorem}{Theorem}[section]
\newtheorem{lemma}[theorem]{Lemma}
\newtheorem{proposition}[theorem]{Proposition}
\newtheorem{cor}[theorem]{Corollary}
\newtheorem{remark}[theorem]{Remark}
\newcommand{\supp}{\mathrm{supp\,}}
\newcommand{\N}{\mathbb{N}}
\newcommand{\R}{\mathbb{R}}
\newcommand{\Z}{\mathbb{Z}}
\newcommand{\T}{\mathbb{T}}
\newcommand{\lb}{\langle}
\newcommand{\rb}{\rangle}
\renewcommand{\hat}{\,\widehat}
\begin{document}

\title[Pointwise Convergence of the Schr\"odinger Flow]{Pointwise Convergence of the Schr\"odinger Flow}

\author{E. Compaan}
\address{Department of Mathematics, Massachusetts Institute of Technology} 
\email{compaan@mit.edu}

\author{R. Luc\`a}
\address{BCAM - Basque Center for Applied Mathematics, 48009 Bilbao, Spain and Ikerbasque, Basque Foundation
for Science, 48011 Bilbao, Spain.}
\email{rluca@bcamath.org} 

\author{G. Staffilani}
\address{Department of Mathematics, Massachusetts Institute of Technology}
\email{gigliola@math.mit.edu}


\keywords{Schr\"odinger equation, maximal estimates, smoothing estimates, random data}

\date{\today}

\begin{abstract} 
In this paper we address the question of the pointwise almost everywhere limit of  nonlinear Schr\"odinger flows to the initial data, in both the continuous and the periodic settings. Then we show how, in some cases, certain smoothing effects for the non-homogeneous part of the solution can be used to upgrade to a 
uniform convergence to zero of this part, and we discuss the sharpness of the results obtained. We also use randomization techniques to prove that with much less regularity of the initial data, both in continuous and the periodic settings, almost surely one obtains uniform  convergence of the nonlinear solution  to the initial data, hence showing how more {\it generic} results can be obtained.
\end{abstract}

\maketitle
\tableofcontents

\section{Introduction}

In this work, we are concerned with the question of almost everywhere convergence of solutions to certain  nonlinear Schr\"odinger equations (NLS) to initial data.
More precisely, let $u(x, t)$ be a solution to  
\begin{equation}\label{NLSeq} 
\begin{cases} i \partial_t u + \Delta u =  \mathcal{N} (u) ,
\\ u(x,0) = f(x), 
\end{cases} 
\quad x \in \T^d \text{ or } \R^d \, ,
\end{equation} 
where $\mathbb{T} := \R / 2\pi \mathbb{Z}$ and $\mathcal{N}$ is a power type nonlinearity.
 If $f \in H^s$, for what $s$ do we have that  $u(x,t) \to f(x)$ as $t \to 0$ for (Lebesgue) almost every $x$\,?

In the linear Euclidean setting, namely $\mathcal N = 0$ and $x \in \R^{d}$, this question was first 
posed by Carleson \cite{Carleson}, who showed that almost everywhere (a.e.) convergence holds for $f \in H^\frac14(\R)$. 
Dahlberg--Kenig \cite{DahlbergKenig} showed that this one dimensional result is sharp; in fact they proved that 
$s\geq \frac14$ is a 
necessary condition for a.e. convergence on~$\R^d, d\geq 1$. 
Since then, the higher dimensional problem has been studied by many authors 
\cite{MR729347, MR848141, MR904948, Vega, MR1194782, MR1671214, MR1748920, MR1748921, MR2033842, MR2264734, Bourgain2013, Luc2018, 1608.07640, MR3613507, MR3842310}. 
Recently,
Bourgain \cite{Bourgain2016} proved that
$s \geq \frac{d}{2(d+1)}$ is a necessary condition for a.e. pointwise convergence to the data (see also \cite{MR3903115} for an alternative counterexample). 
This has been proved to be sharp, up to the endpoint, by Du--Guth--Li \cite{DGL} in the $\R^2$ case, and by Du-- Zhang \cite{Du2019} in 
higher dimensions. 

In the linear periodic setting, namely $\mathcal N = 0$ and $x \in \T^{d}$, much less is known. The only result appears to be that of Mouya--Vega \cite{MR2409184}  when $d=1$,
(sufficiency of $s > \frac13$ and necessity of $s \geq \frac14$), 
which method of proof, based on Strichartz estimates, has been extended to higher dimensions by Wang--C. Zhang \cite{WangZhang}. Together with recent 
improvements in periodic Strichartz estimates \cite{BourgainDemeter}, one can show that $s > \frac{d}{d+2}$ is a sufficient condition for almost everywhere 
convergence to initial data\footnote{Although in this paper  we only consider rational tori, this particular result 
holds for any torus since it is based on Strichartz estimates, now available for any torus thanks to \cite{BourgainDemeter}.}. 
We refer to Section \ref{Deterministic ResultsLinear} for more details. In Section  \ref{Deterministic ResultsLinear} we 
also show that almost everywhere convergence fails when  
$s<\frac{d}{2(d+1)}$,\footnote{This follows adapting the non periodic counterexamples to the periodic setting and still works if we consider irrational tori.} 
see Proposition \ref{couter}.   
At the  moment, in the periodic  case almost sure convergence when $s\in \left[ \frac{d}{2(d+1)},  \frac{d}{d+2} \right]$ remains  an open question.

In the first part of this paper we extend these results to the nonlinear setting. Hereafter 
$\Omega$ denotes either $\T$ or $\R$.
We define
\begin{equation}\label{Def:SOmegaD}
s_{\Omega^d} :=   \begin{cases}
 \frac{d}{d+2} & \text{ if }  \Omega = \mathbb{T}, \vspace{6pt}
 \\  
  \frac{d}{2(d+1)} & \text{ if }  \Omega = \mathbb{R} .
  \end{cases}
\end{equation}  
Summarizing  the results mentioned above, one has
$$
\lim_{t \to 0} e^{it\Delta}f(x) = f(x) \quad \mbox{for a.e. $x \in \Omega^d$}
$$
for all $f \in H^{s}(\Omega^d)$ with $s > s_{\Omega^d}$. If $\Omega^{d} = \R$ we only need $s \geq s_{\R} = \frac14$. 

In the following theorem we prove that a similar result is true for solutions to NLS with power nonlinearities.

\begin{theorem}\label{MainTHM1}
Let $\mathcal{N}(z) = \pm |z|^{p-1}z$ with $p \geq 3$.
If $f \in H^{s}(\Omega^d)$ with 
\begin{equation}\label{RegAssTheorem1}
s > \max \left( s_{\Omega^d}, \ \frac{d}{2} - \frac{2}{p-1} \right) \, ,
\end{equation}
and $u$ is the corresponding solution to \eqref{NLSeq}, then
\begin{equation}\label{AlmostEverywhereConv}
\lim_{t \to 0} u(x, t) = f(x) \quad \mbox{for a.e. $x \in \Omega^d$} \, .
\end{equation}
If $\Omega^d = \R$ and $p < 9$ we can relax the condition $s > s_{\R} = \frac14$ to $s \geq \frac14$.
Moreover if we consider the cubic equation ($p=3$) and $d=1$ or $\Omega^d = \R^2$ we have for~$s > \frac d6$
\begin{equation}\label{EverywhereConv}
\lim_{t \to 0} u(x, t) - e^{it \Delta} f(x) = 0 \quad \text{for every $x \in \Omega^d$} \, 
\end{equation} 
and the convergence is uniform with respect to the $x$ variable\footnote{A proof of \eqref{EverywhereConv} in the case $d=1$ is in \cite{ErdoganTzirakis, Comp2013}.
Here we extend 
the result to $\Omega^d = \R^2$.}, namely
$$
\lim_{t \to 0} \sup_{x \in \Omega^d} |u(x, t) - e^{it \Delta} f(x)| = 0 \, .
$$
\end{theorem}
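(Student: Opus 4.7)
I would write the solution via Duhamel,
\[ u(t) = e^{it\Delta}f + w(t), \qquad w(t) := -i\int_0^t e^{i(t-r)\Delta}\mathcal{N}(u(r))\,dr, \]
and treat the two pieces separately. The linear contribution $e^{it\Delta}f(x)\to f(x)$ a.e.\ is exactly the result recalled in the introduction, valid because $s>s_{\Omega^d}$ (or $s\geq 1/4$ when $\Omega=\R$), so the task reduces to proving that the Duhamel remainder $w(t,x)$ vanishes a.e.\ in $x$ as $t\to 0$.

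\textbf{Control of the Duhamel term.} First I would invoke the standard subcritical local well-posedness theory in $H^{s}$, available because $s>d/2-2/(p-1)$, to obtain a solution $u\in C([0,T];H^{s}(\Omega^d))$. Using Sobolev embeddings, the fractional Leibniz/chain rules, and Strichartz estimates when $s$ is close to scaling-critical, I would then establish
\[ \mathcal{N}(u)\in L^{1}\bigl([0,T];H^{s'}(\Omega^d)\bigr) \]
for some $s'\in(s_{\Omega^d},s]$ (or $s'=s=1/4$ in the $\R$ endpoint case). From the pointwise inequality
\[ \sup_{0<t<T}|w(t,x)|\leq \int_{0}^{T}\sup_{\tau\in\R}\bigl|e^{i\tau\Delta}\mathcal{N}(u(r))(x)\bigr|\,dr, \]
Minkowski's integral inequality combined with the linear maximal-function estimate on $H^{s'}$ (the Carleson-type bound that underlies the linear result) yields
\[ \Bigl\|\sup_{0<t<T}|w(t,\cdot)|\Bigr\|_{L^{q}(\Omega^d)}\lesssim \int_{0}^{T}\|\mathcal{N}(u(r))\|_{H^{s'}}\,dr<\infty. \]
In particular $r\mapsto \sup_{\tau}|e^{i\tau\Delta}\mathcal{N}(u(r))(x)|$ lies in $L^{1}([0,T])$ for a.e.\ $x$, and absolute continuity of the integral then forces $w(t,x)\to 0$ as $t\to 0$ for a.e.\ $x$. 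Together with the linear convergence this establishes \eqref{AlmostEverywhereConv}.

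\textbf{Uniform convergence in the cubic case.} For $p=3$ with $d=1$ or $\Omega^d=\R^{2}$ and $s>d/6$, the plan is to replace the maximal-function argument by a \emph{nonlinear smoothing} estimate: using a normal-form/resonance decomposition together with sharp bilinear Strichartz estimates, I would show that $w(t)\in H^{s+\alpha}$ for some $\alpha>0$ with $s+\alpha>d/2$ and $\|w(t)\|_{H^{s+\alpha}}\to 0$ as $t\to 0$. Sobolev embedding then places $w(t)$ in $C^{0}_x$ with $\|w(t)\|_{L^{\infty}_x}\to 0$, yielding \eqref{EverywhereConv}. The threshold $s>d/6$ is exactly what is needed so that the nonlinear gain pushes the Duhamel term strictly above the $H^{d/2}$ embedding into $C^{0}$.

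\textbf{Main obstacle.} The principal difficulty will be producing the nonlinear smoothing gain of size $>d/2-d/6=d/3$ in the cubic case on $\R^{2}$: this requires a careful resonance analysis and the use of refined $L^{4}$/bilinear Strichartz estimates, while in $d=1$ one can lean on the techniques of \cite{ErdoganTzirakis,Comp2013}. For the generic a.e.\ statement the analogous obstacle is closing the $L^{1}_{t}H^{s'}$ bound on $\mathcal{N}(u)$ at the endpoint $s=1/4$ on $\R$, which is precisely why the restriction $p<9$ (the $H^{1/4}$-subcritical range on $\R$) enters.
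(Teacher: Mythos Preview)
Your route to \eqref{AlmostEverywhereConv} is genuinely different from the paper's, and one step is not adequately justified.  The paper does \emph{not} split $u=e^{it\Delta}f+w$ and try to control the Duhamel piece via the linear maximal estimate applied slice-by-slice in $r$.  Instead it (i) approximates $u=\Phi_tf$ by the smooth truncated flows $\Phi^N_tf$, (ii) proves a general embedding of $X^{s,\frac12+}_\delta$ into the maximal-function space (Lemma~\ref{MainLemma}), and (iii) shows $\|\Phi_tf-\Phi^N_tf\|_{X^{s,\frac12+}_\delta}\to0$ using only the standard multilinear estimate $\|\mathcal N(u)-\mathcal N(v)\|_{X^{s,-\frac12++}}\lesssim(\|u\|^{p-1}+\|v\|^{p-1})\|u-v\|$ from the local theory (Lemma~\ref{LocalLemma}).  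The point is that the paper never needs anything stronger on $\mathcal N(u)$ than what the well-posedness argument already provides.

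Your argument, by contrast, hinges on $\mathcal N(u)\in L^1_tH^{s'}_x$ for some $s'>s_{\Omega^d}$.  This is \emph{strictly stronger} than the $X^{s,-\frac12+}$ bound that the local theory gives: one has $L^1_tH^s_x\hookrightarrow X^{s,-\frac12-}$ but not into $X^{s,-\frac12+}$, and conversely $X^{s,-\frac12+}$ does not embed into $L^1_tH^s_x$.  So this bound does not come for free from well-posedness and has to be proved independently.  On $\R^d$ one can usually patch this together from Strichartz and Kato--Ponce with some bookkeeping, but on $\T^d$ the Strichartz estimates carry an $N^{0+}$ loss at the endpoint, and when $s$ sits near $\max(s_{\T^d},\,d/2-2/(p-1))$ it is not clear how to close $\||u|^{p-1}u\|_{L^1_tH^{s'}_x}$ for general $d,p$ (try for instance $\T^3$, $p=5$, $s$ just above $1$).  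You would essentially be forced to reprove a multilinear estimate at least as delicate as Lemma~\ref{LocalLemma}, whereas the paper's $X^{s,b}$ embedding lets it reuse that lemma directly.  This is the real content of the paper's Section~\ref{Sec:ProofOfTheorem1}: reducing the nonlinear maximal estimate to the \emph{same} multilinear bound that drives the contraction.

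For \eqref{EverywhereConv} your plan coincides with the paper's: both use a smoothing estimate for the cubic nonlinearity (Corollary~\ref{uuu}, proved via the bilinear estimate of Lemma~\ref{bilinear}) to place the Duhamel term in $X^{\frac d2+,\frac12+}_\delta\hookrightarrow C_tH^{\frac d2+}_x\hookrightarrow C_tC_x$, and the threshold $s>d/6$ is exactly $s+\sigma>d/2$ with $\sigma<2s$.  No resonance/normal-form decomposition is needed; the bilinear Strichartz gain alone suffices.
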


\begin{remark}\label{Remark1}
The result is sharp in the following sense. The conditions $p \geq 3$ and
$$
s > \max \left( 0, \ \frac{d}{2} - \frac{2}{p-1} \right) \, , 
$$
ensure that the flow is locally well defined, in fact $s_c:=\frac{d}{2} - \frac{2}{p-1}$ is the critical exponent. The extra assumption~$s > s_{\Omega^d}$ 
ensures that the linear part $e^{it\Delta} f$ of the flow converges pointwise a.e. to the initial datum $f$. This 
condition is sharp if $\Omega = \mathbb{R}$ (modulo endpoints when $d \geq 2$) and we do not expect improved convergence 
to the data when we introduce a nonlinearity. Moreover by the proof of Theorem \ref{MainTHM1} it 
will be clear that any improvement of the exponent $s_{\Omega^d}$ into the linear setting would provide 
an analogous improvement of Theorem \ref{MainTHM1} as well. More precisely, if we define 
\begin{equation}\label{Inf}
s^*_{\Omega^{d}} := \inf \left\{ s \, : \,  \lim_{t \to 0} e^{it\Delta}f(x) = f(x) \quad \mbox{for a.e. $x \in \Omega^d$}, \quad \mbox{$\forall f \in H^{s}(\Omega^d)$} \right\} \, , 
\end{equation}
we can replace the assumption \eqref{RegAssTheorem1} by 
\begin{equation}
s > s^*_{\Omega^d} \quad \mbox{and} \quad s >  \frac{d}{2} - \frac{2}{p-1}  \, ,
\end{equation}
and we can relax $s > s^*_{\Omega^d}$ to $s \geq s^*_{\Omega^d}$ if the $\inf$ in \eqref{Inf} is a $\min$.
\end{remark}

In the  second part of this paper we adopt a different approach than the purely deterministic one we presented in the first part  that culminated  in Theorem \ref{MainTHM1}.  More precisely we consider the linear and  the cubic NLS and we show that actually uniform convergence to the  
data is {\it generically} true for 
initial data which are less smooth than the data postulated in Theorem \ref{MainTHM1}. In the periodic setting, we consider  
\begin{equation}\label{ReprInitialData}
 f^\omega(x) = \sum_{n\in \Z^d} \frac{g_n^\omega}{\lb n \rb^{\frac{d}2+\alpha}} e^{in \cdot x} \, ,
 \qquad \alpha >0 \, , 
 \end{equation}
where $g_n^\omega$ are independent (complex) standard Gaussian variables and we define $\lb \cdot \rb = (1 + |\cdot|^2)^\frac12$.
We will need the following facts, proved in Section \ref{linSchrTd}. Fix $t \in \R$. Then $ e^{it\Delta}f^\omega(x)$ belongs to $\bigcap_{s < \alpha}H^{s}(\T^d)$ 
$\mathbb P$-almost surely.  
Thus we are working at the $H^{\alpha-}$ level.  
Moreover, $e^{it \Delta}f^{\omega}$ is $\mathbb{P}$-almost surely a continuous function of the $x$ variable, where $\mathbb{P}$ is the law of the sequence 
$\{ g_n^\omega \}_{n \in \mathbb{Z}}$.

In the following statement we consider the Wick ordered cubic NLS in $\T^d, \, d=1, 2$. Namely equation \eqref{NLSeq} with nonlinearity
\begin{equation}\label{eq:wickNLS}
\mathcal{N}(u) := \pm u \left( |u|^2 - 2 \mu \right), 
\qquad 
\mu := \fint_{\T^d} |u(x, t)|^2 dx \, ,
\end{equation}
Since once we fix the initial datum 
$f\in L^2$, solutions to this equation are related to that of the cubic NLS by multiplication with a factor $e^{i 2 \mu t}$, 
the study of pointwise  
convergence of \eqref{eq:wickNLS} turns out to be equivalent to that of the cubic NLS.

\begin{theorem}\label{MainTHM2}
Let $f^{\omega}$ be defined in \eqref{ReprInitialData} for $\alpha>0$.
Then one has $\mathbb{P}$-almost surely the following. 
For all $t \in \R$ the free solution 
$e^{it \Delta} f^\omega$ belongs to $\bigcap_{s < \alpha}H^{s}(\T^d)$ 
and is continuous in the $x$ variable. 
Moreover 
\begin{equation}\label{r2prob-lin}
\lim_{t \to 0} e^{it\Delta}f^{\omega}(x) = f^{\omega}(x) \quad \text{ for every $x \in \T^d$} \, .
\end{equation}
and the convergence is uniform in the $x$ variable.
Let $d=1, 2$ and let $u$ be the solution to the Wick ordered cubic NLS \eqref{eq:wickNLS} with  random initial data 
$f^\omega$ as above.  Again, $\mathbb{P}$-almost surely one has
\begin{equation}\label{AlmostEverywhereConvergenceRandom}
\lim_{t \to 0} u(x, t) = f^{\omega}(x) \quad \mbox{for a.e. $x \in \T^d$} \, .
\end{equation}
Furthermore, if $\alpha > \frac{d-1}2$, then 
\begin{equation}\label{EverywhereConvergenceRandom}
\lim_{t \to 0} u(x, t) - e^{it\Delta}f^{\omega}(x) = 0 \quad \mbox{for every $x \in \T^d$}.  
\end{equation}
and the convergence is uniform in the $x$ variable.
\end{theorem}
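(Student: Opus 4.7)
\medskip

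\emph{Linear part.} I would first show that, for any $\alpha>0$, the series $S_N(t,x):=\sum_{|n|\le N}g_n^\omega\langle n\rangle^{-d/2-\alpha}e^{-it|n|^2}e^{in\cdot x}$ converges $\mathbb P$-a.s.\ uniformly in $(t,x)$ on compact sets of $\mathbb R\times\mathbb T^d$. The standard route is to apply Kahane--Khintchine
$$
\Big\|\sum_{n} g_n h_n(t,x)\Big\|_{L^p(\Omega)}\lesssim \sqrt p\,\Big(\sum_n |h_n(t,x)|^2\Big)^{1/2}
$$
to dyadic blocks, combined with a Sobolev embedding $H^{M}_{t,x}\hookrightarrow L^\infty_{t,x}$ with $M>(d+1)/2$, and then use the extra decay $\langle n\rangle^{-\alpha}$ to sum geometrically. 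Union bound plus Borel--Cantelli gives $\mathbb P$-a.s.\ absolute, summable dyadic tail bounds; continuity of $e^{it\Delta}f^\omega$ in $x$ follows since each $S_N$ is continuous. For uniform convergence as $t\to 0$, split at a frequency $N$: the high-frequency tail $\|(I-P_{\le N})e^{it\Delta}f^\omega\|_{L^\infty_x}+\|(I-P_{\le N})f^\omega\|_{L^\infty_x}\to 0$ as $N\to\infty$ (uniformly in $t$, by the a.s.\ uniform convergence just established), while on low frequencies
$$
\|P_{\le N}(e^{it\Delta}f^\omega-f^\omega)\|_{L^\infty_x}\le |t|\sum_{|n|\le N}|g_n^\omega||n|^2\langle n\rangle^{-d/2-\alpha}\to 0
$$
as $t\to 0$ with $N$ fixed. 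Choosing $N$ first and then $t$ proves \eqref{r2prob-lin} with uniform convergence.

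\medskip

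\emph{Nonlinear almost everywhere convergence.} I would invoke the a.s.\ local well-posedness theory of the Wick-ordered cubic NLS on $\mathbb T^d$, $d=1,2$, for Gaussian data in $H^{\alpha-}$ (Bourgain in $d=2$, classical subcritical theory in $d=1$), so that the solution splits as $u(t)=e^{it\Delta}f^\omega+v(t)$ with $v(0)=0$. The heart of the matter is probabilistic smoothing: the Duhamel remainder $v$ lies a.s.\ in $C([-T,T];H^{s_0}(\mathbb T^d))$ for some $s_0>s_{\mathbb T^d}$. This is proved by running a contraction argument for $v$ in $X^{s_0,b}$, where the cubic nonlinearity $v\mapsto (e^{it\Delta}f^\omega+v)|e^{it\Delta}f^\omega+v|^2$ is expanded and the dangerous trilinear terms in $e^{it\Delta}f^\omega$ are estimated by exploiting Gaussian orthogonality together with the Wick renormalization, which kills the resonant diagonal. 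Since $s_0>s_{\mathbb T^d}$, the deterministic linear statement in Section \ref{Deterministic ResultsLinear} applied to $v$ (which starts from zero data) gives $v(t,x)\to 0$ for a.e.\ $x$; combined with the first part this yields \eqref{AlmostEverywhereConvergenceRandom}.

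\medskip

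\emph{Nonlinear uniform convergence under $\alpha>(d-1)/2$.} Under this stronger assumption, the same contraction should give $v\in C_t H^{s_1}$ with $s_1>d/2$, so that Sobolev embedding places $v(t,\cdot)$ in $C_x$ continuously in $t$; hence $\|v(t)\|_{L^\infty_x}\to 0$. Adding the uniform linear convergence proved in the first paragraph yields \eqref{EverywhereConvergenceRandom} with uniform convergence. The gain of $(d-1)/2$ derivatives is precisely what is needed to upgrade from $H^\alpha$ into $H^{d/2+}$ once $\alpha$ is increased by the probabilistic smoothing budget.

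\medskip

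The main obstacle is the probabilistic smoothing step for $v$, particularly in $d=2$, where the renormalized cubic interaction of three random waves requires a careful case analysis (high-high-high vs.\ high-high-low frequency interactions, Wick-subtracted resonant piece) in $X^{s,b}$; the rest of the argument is a combination of soft harmonic analysis for random series and application of Theorem \ref{MainTHM1}.
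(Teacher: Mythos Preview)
Your overall strategy matches the paper's: Kahane--Khintchine moment bounds plus Bernstein for the linear uniform convergence, Bourgain-type probabilistic smoothing of the Duhamel remainder $v=u-e^{it\Delta}f^\omega$ into $X^{\alpha+\sigma,\frac12+}_\delta$ with $\sigma<\frac12$ for the nonlinear statements, and Sobolev embedding once $\alpha+\sigma>\frac d2$ for the uniform part. The identification of the main obstacle (the trilinear case analysis for the renormalized nonlinearity in $d=2$) is exactly right; this is the paper's Proposition~\ref{Bourgain1/2}/Lemma~\ref{Bourgain1/2General}.

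There is one imprecision worth flagging. You write that ``the deterministic linear statement in Section~\ref{Deterministic ResultsLinear} applied to $v$ (which starts from zero data) gives $v(t,x)\to 0$ for a.e.\ $x$''. The results of that section concern $e^{it\Delta}f$, and $v$ is not a free evolution; knowing only $v\in C_tH^{s_0}$ with $v(0)=0$ does \emph{not} by itself yield pointwise a.e.\ convergence. What is actually needed is the embedding of Lemma~\ref{MainLemma}, namely $\bigl\|\sup_{0\le t\le\delta}|F|\bigr\|_{L^2_x}\lesssim\|F\|_{X^{s_0,\frac12+}_\delta}$ for $s_0>s_{\mathbb T^d}$, applied not to $v$ but to high-frequency tails (or to differences with smooth approximants), followed by Chebyshev. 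The paper implements this via Proposition~\ref{MaxEstObv}: it introduces the truncated flows $\Phi_t^N f^\omega$, reduces to showing $\bigl\|\sup_t|\Phi_t f^\omega-\Phi_t^N f^\omega|\bigr\|_{L^2_x}\to0$, and controls the Duhamel differences $w-w^N$ in $X^{\alpha+\sigma,\frac12+}_\delta$ using the smoothing estimate. Your route can be made rigorous along the same lines (e.g.\ via $P_{>N}v$), but the sentence as written, and the closing reference to ``application of Theorem~\ref{MainTHM1}'' (which requires deterministic data already above $s_{\mathbb T^d}$), do not quite do the job.
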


\begin{remark}
Notice that if $d=1$ combining \eqref{r2prob-lin} and \eqref{EverywhereConvergenceRandom} we get in fact a stronger convergence statement than \eqref{AlmostEverywhereConvergenceRandom}, namely the convergence occurs at any $x$ and uniformly ($\mathbb{P}$-almost surely). 
If $d=2$ the combination of \eqref{r2prob-lin} and \eqref{EverywhereConvergenceRandom} gives 
this stronger convergence result only for data that are in $H^{\frac12 +}(\T^2)$, while by \eqref{AlmostEverywhereConvergenceRandom}
we see that 
a.e. convergence occurs
for initial data that are merely in $H^{0 +}(\T^2)$ ($\mathbb{P}$-almost surely). 
\end{remark}

We also obtain results for randomized initial data on Euclidean spaces. 
We use an integer tiling--type randomization, of the type introduced in \cite{ZhangFang, LuhrMend, BOP15}. To begin, we construct a partition of unity on $\R^d$. 
Let $\eta$ be a smooth cut-off of the unit interval. Specifically, let $\eta: \R^d \to [0,1]$ be a smooth function such that $\supp \eta \subset \{ \xi : |\xi| \leq 2\}$ and $\eta(\xi) = 1$ for all $|\xi| \leq 1$. Then for $n \in \Z^d$, define 
\begin{equation}\label{eq:psiDef}
\psi_n(\xi) = \frac{\eta(\xi - n)}{\sum_{\ell \in \Z^d} \eta(\xi-\ell)}. 
\end{equation}
Observe that $\psi_n$ is  smooth function supported on $\{ \xi: |\xi - n| \leq 2\}$ and we have $\sum_n \psi_n(\xi) = 1$ for all $\xi$. 

Fix $f \in H^s(\R^d)$ with $s > 0$. We construct a randomization $f^\omega$ of $f$ as follows. Let $g_n^\omega$ be a collection of 
independent (complex) standard Gaussian variables and define $f^\omega$ by
\begin{equation} \label{eq:RdRand}  
\hat{f^\omega}(\xi) = \sum_{n \in \Z^d} g_n^\omega\, \psi_n(\xi) \hat{f}(\xi) \, .
\end{equation}
This randomization satisfies analogous properties to the one described above in the periodic setting. If 
$f^{\omega} \in H^{s}(\R^{d})$ then $e^{it \Delta} f^{\omega}$ is in $H^{s}$ and is a continuous function of the $x$ variable 
$\mathbb{P}$-almost surely,
for all $t \in \R$; we refer to Section \ref{SectionlinRandRd} for more details.  
In order to compare \eqref{eq:RdRand} with \eqref{ReprInitialData} it is convenient to look at the Fourier transform of \eqref{ReprInitialData}, that is 
\begin{equation}\label{ReprInitialDataFourier}
\hat{ f^\omega}(n) =  \frac{g_n^\omega}{\lb n \rb^{\frac{d}2+\alpha}} 
\end{equation}
and remember that $\psi_n(\xi)$ is a bump function of a neighborhood of $\xi = n$ and that a function with 
Fourier coefficients $\lesssim \lb n \rb^{-\frac{d}2-\alpha}$ Belongs to $H^{\alpha-}$. 
We then have the following 

\begin{theorem}\label{MainTHM3}
Fix $f \in H^s(\R^d)$ with $s > 0$. Let $f^{\omega}$ be a randomization of $f$ as defined in \eqref{eq:RdRand}.
Then one has $\mathbb{P}$-almost surely the following.
For all $t \in \R$ 
the free solution $e^{it \Delta} f^\omega$ belongs to $H^{s}(\R^d)$ 
and that is continuous in the $x$ variable. 
Moreover
\begin{equation}\label{r2prob-lin2}
\lim_{t \to 0} e^{it\Delta}f^{\omega}(x) = f^{\omega}(x) \quad \text{for every} \, \, x \in \R^d
\end{equation}
and the convergence is uniform in the $x$ variable.
Let then $u$ be the solution to the cubic NLS with initial data $f^{\omega}$.
If $d=1,2$ and~$s > \frac{d}{6(d+1)}$, again $\mathbb{P}$-almost surely
\begin{equation}\label{r2prob-nonlin} 
\lim_{t \to 0} u(x,t) = f^{\omega}(x) \quad \mbox{ for almost every $x \in \R^d. $}
\end{equation} 
\end{theorem}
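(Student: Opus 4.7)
The strategy is to follow the paradigm ``linear flow plus Duhamel remainder'' from the proof of Theorem \ref{MainTHM1}, now exploiting the Gaussian randomization. First I handle the linear statements about $e^{it\Delta}f^\omega$. The identity $\|e^{it\Delta}f^\omega\|_{H^s} = \|f^\omega\|_{H^s}$ together with a direct second-moment computation (using independence of the $g_n^\omega$ and the bounded overlap of the supports of the $\psi_n$) yields $e^{it\Delta}f^\omega \in H^s(\R^d)$ $\mathbb{P}$-almost surely. For almost sure continuity of $x \mapsto e^{it\Delta}f^\omega(x)$ and the uniform limit $e^{it\Delta}f^\omega \to f^\omega$ as $t \to 0$, I would estimate moments of the increments of the Gaussian process $(x,t) \mapsto e^{it\Delta}f^\omega(x)$ using Khinchine's inequality and then apply Kolmogorov's continuity criterion. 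The key structural input is that each block factors as $e^{it\Delta}\psi_n(D)f(x) = e^{i n \cdot x - i t |n|^2} G_n(x - 2tn, t)$, with $G_n$ Schwartz-like (its Fourier support lies in the fixed ball $\{|\zeta| \leq 2\}$); this absorbs the high-frequency phases and keeps the moment bounds summable in $n$ for every $s > 0$.

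For the nonlinear part I would use the Bourgain--Burq--Tzvetkov decomposition $u = e^{it\Delta}f^\omega + v$, where $v$ solves
\[
i\partial_t v + \Delta v = |e^{it\Delta}f^\omega + v|^2\bigl(e^{it\Delta}f^\omega + v\bigr), \qquad v(\cdot, 0) = 0.
\]
The central technical step is a probabilistic smoothing estimate: $\mathbb{P}$-almost surely there exist $T = T(\omega) > 0$ and $\sigma > s_{\R^d} = d/(2(d+1))$ such that
\[
\int_0^T \bigl\| |u(s)|^2 u(s) \bigr\|_{H^\sigma_x}\, ds < \infty.
\]
The worst contribution is the purely linear cube $|e^{it\Delta}f^\omega|^2 e^{it\Delta}f^\omega$, which I would bound by a trilinear probabilistic argument: each factor enjoys improved $L^p_t L^q_x$ Strichartz estimates (obtained by applying Khinchine block-by-block in frequency), and the resulting Sobolev gain is essentially a factor of three over the deterministic threshold. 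This is precisely why the admissible regularity becomes $s > d/(6(d+1))$. Mixed terms involving $v$ are easier since $v$ is smoother by construction, and a standard contraction in the relevant probabilistic norms gives local well-posedness of $v$.

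To convert the smoothing estimate into pointwise convergence of $v$, I would combine the Duhamel representation $v(x,t) = -i \int_0^t e^{i(t-s)\Delta}\bigl(|u|^2 u\bigr)(s)\, ds$ with the Carleson--Bourgain--Du--Guth--Li--Zhang maximal bound for the Schr\"odinger maximal function $M g(x) := \sup_{0 < \tau < T} |e^{i \tau \Delta} g(x)|$:
\[
\| M g \|_{L^2(B(0,R))} \lesssim_R \|g\|_{H^\sigma}, \qquad \sigma > s_{\R^d}.
\]
This yields $|v(x,t)| \leq \int_0^t M\bigl(|u(s)|^2 u(s)\bigr)(x)\, ds$, a quantity monotone in $t$ whose $L^2(B(0,R))$-norm is controlled by $\int_0^t \| |u(s)|^2 u(s)\|_{H^\sigma}\, ds \to 0$ as $t \to 0$; hence $v(x,t) \to 0$ for almost every $x$. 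Combined with the already established uniform convergence of the linear part, this proves \eqref{r2prob-nonlin}. The hardest step is the trilinear probabilistic smoothing estimate at the sharp threshold $s > d/(6(d+1))$: naive estimates yield only weaker thresholds, so a delicate multilinear decomposition and a case-by-case analysis in $d = 1$ and $d = 2$ (since $s_{\R^d}$ depends on $d$) will be required.
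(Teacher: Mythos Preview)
Your overall architecture (linear flow plus Duhamel remainder, smoothing for the remainder, maximal estimate to convert smoothing into a.e.\ convergence) matches the paper's. However, you have misidentified the source of the crucial gain in the nonlinear part, and this leads you to anticipate a difficulty that does not actually arise.

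You claim that the threshold $s>\frac{d}{6(d+1)}$ comes from a \emph{probabilistic} trilinear smoothing estimate, obtained by feeding the improved $L^p_tL^q_x$ Strichartz bounds for $e^{it\Delta}f^\omega$ into the cube $|e^{it\Delta}f^\omega|^2e^{it\Delta}f^\omega$, and you flag this as ``the hardest step''. But improved integrability from randomization does not by itself produce extra Sobolev regularity: if you try to put $\sigma>s_{\R^d}$ derivatives on the product and distribute them, each factor still only carries $s$ derivatives, and you end up needing $s>s_{\R^d}$ rather than $s>\frac13 s_{\R^d}$. The paper instead uses the \emph{purely deterministic} bilinear smoothing of Corollary~\ref{uuu}, namely
\[
\||u|^2u\|_{X^{s+\sigma,b'-1}}\lesssim \|u\|_{X^{s,b}}^3,\qquad \sigma<\min(2s,1),
\]
which places the Duhamel contribution in $X^{s+\sigma,\frac12+}_\delta$ with $s+\sigma$ arbitrarily close to $3s$. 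Combined with the embedding of Lemma~\ref{MainLemma} (which requires only $s+\sigma>s_{\R^d}$), this yields $3s>\frac{d}{2(d+1)}$, i.e.\ exactly $s>\frac{d}{6(d+1)}$. The paper even remarks that, since Corollary~\ref{uuu} is deterministic, the maximal estimate \eqref{NonlinearMaxEstOnR} in fact holds for \emph{every} $\omega$; randomization enters only through the linear convergence (Proposition~\ref{RandomLinCOnv2}).

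Two smaller points. First, your Kolmogorov-continuity route for the linear statements is more elaborate than necessary: the paper simply combines the block-wise $L^\infty_{x,t}$ bound \eqref{ASCOBern} (Khinchine plus Bernstein) with the high/low frequency splitting argument of Proposition~\ref{RandomLinCOnv1}, which transfers verbatim to $\R^d$. Second, your pointwise Duhamel bound $|v(x,t)|\le\int_0^t M(|u|^2u)(x,s)\,ds$ together with the $H^\sigma\to L^2_{\mathrm{loc}}$ maximal inequality is a legitimate alternative to the paper's $X^{s,b}$ embedding (Lemma~\ref{MainLemma}); but to close it you would still need $|u|^2u\in L^1_tH^\sigma_x$ with $\sigma>s_{\R^d}$, and the cleanest way to obtain that is again via the deterministic Corollary~\ref{uuu} (passing through $X^{s+\sigma,\frac12+}_\delta\hookrightarrow C_tH^{s+\sigma}_x$), not through a new probabilistic multilinear estimate.
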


\begin{remark}
Both the randomization procedures described do not improve smoothness; see for example Remark 1.2 in \cite{BurqTzvet} and the introduction of \cite{LuhrMend}.  
\end{remark}

\begin{remark}
The statements \eqref{r2prob-lin}, \eqref{r2prob-lin2} are still true, with same proof, as explained in Sections \ref{linSchrTd}, \ref{SectionlinRandRd}, if we consider random Fourier series drawn from distribution with sufficiently strong decay properties.
In fact, the argument we present works 
for independent $g_n^\omega$ drawn 
from a sequence of centered sub Gaussian random variables with unitary variance. 
\end{remark} 

\begin{remark}
The randomization precedures give convergence $e^{it\Delta} f^{\omega} \to f^{\omega}$   
for {\it any} $x \in \Omega^d$ (and uniformly) $\mathbb{P}$-almost surely; see \eqref{r2prob-lin}, \eqref{r2prob-lin2}. In the deterministic case one has convergence at {\it any} $x \in \R^d$ (and uniformly) only for $s > d/2$. 
In fact when $s \leq d/2$ it make sense to consider a refined version of the a.e. convergence problem, which consists into determine 
the (worst possible) Hausdorff dimension of the set 
where the convergence to $H^{s}(\R^d)$ initial data fails. This problem, introduced in \cite{MR997967}, has been solved
in \cite{MR2754999} when $s \in \left[ \frac d4, \frac d2 \right]$. In the range $s \in \left(\frac{d}{2(d+1)}, \frac d4\right)$, where the problem is still open, 
the best positive and negative results to date are 
in \cite{Du2019} and \cite{MR3613507, MR3903115}, respectively. 
\end{remark}

We now give  a brief description of our methods of proof for the three main theorems listed above. 

To prove \eqref{AlmostEverywhereConv}  in Theorem \ref{MainTHM1} we consider (smooth) approximations of the solutions of NLS obtained by truncating \eqref{NLSeq} on 
the first $N$ Fourier modes. Since for this class of solutions we have pointwise convergence to the initial data, we are able to rewrite the convergence 
problem as an $L^{2}_{x, loc}$ bound for a suitable maximal function, adapted to the nonlinear setting; see Proposition \ref{MaxEstObv}. It is worth  mentioning that 
(already in the linear setting) 
the maximal function approach is 
the most powerful tool to study a.e. pointwise convergence to the initial data.
In order to obtain a good enough bound, in Proposition \ref{MaxEstObv} we embed the restriction space $X^{s, \frac12+}_{\delta}$ into the space
$$\left\{ F(x,t) : \|\sup_{0 \leq t \leq \delta} |F(x,t)| \|_{L^{2}_{x, loc}} < \infty \right\} \, , $$ 
for $s > s_{\Omega^d}$; see Proposition \ref{MainLemma} ($[0, \delta]$, $\delta >0$ will be the local existence time). 
This embedding allows us to use Strichartz estimates to 
conclude the proof; see Section~\ref{Sec:ProofOfTheorem1}.

For the cubic NLS we can prove stronger results, taking advantage of the algebraic structure of the nonlinearity
$\mathcal{N}(z) = \pm |z|^{2}z$. 
A first example of this phenomenon is already in the statement \eqref{EverywhereConv}. Let us consider $x \in \R^2$ to fix the notations 
(similar observations can be made if $x \in \Omega$). 
Since for $s > s_{\R^2} = \frac13$ one 
has $e^{it\Delta} f(x) \to f(x)$ as $t \to 0$ for a.e. $x \in \R^2$, we see that \eqref{EverywhereConv} is 
clearly stronger than~\eqref{AlmostEverywhereConv}. In fact, we can show that for any $t \in \R$, the function
$$
x \in \R^2 \to u(x, t) - e^{it\Delta}f(x) \in \mathbb{C}
$$ 
is continuous. Moreover the map
\begin{equation}\label{2dSmoothing}
t \in \R \to u(x, t) - e^{it\Delta}f \in C_x(\R^2) \quad  \mbox{(with the $\sup_{x \in \R^2}$ norm)} 
\end{equation}
is also continuous.
This stronger convergence result is a consequence of a smoothing effect associated to the cubic nonlinearity on $\R^2$, that we prove 
in Corollary \ref{uuu}. A similar smoothing effect has been noted in $1d$ in both the periodic\footnote{In fact in the periodic setting one has 
to consider the Wick ordered equation; 
see the paragraph before Theorem \ref{MainTHM2} and Section \ref{WickOrderedNLS} for details.}
 \cite{ErdoganTzirakis} and non-periodic \cite{Comp2013} settings:
the nonlinearity turns out to be $\sigma$--smoother than the initial datum in $H^s$, with $\sigma < \max(2s, \frac12)$. 
We strengthen and extend this smoothing effect on $\R^d$, $d=1,2$ to $\sigma < \max(2s, 1)$; see Corollary \ref{uuu}. 
Using these facts and the 
Sobolev embedding $H^{\frac12+}(\Omega) \hookrightarrow L^{\infty}(\Omega)$, we see that the ($1d$ analog of) property~\eqref{2dSmoothing} 
is satisfied by initial data in $H^{s}(\Omega)$ with $s > \frac16$. On the other hand, in \cite{MR2337486} (see also \cite{MR3903115}) it has been observed that for any $s < \frac14$ 
there are initial data such that $\limsup_{t \to 0} |e^{it\Delta} f(x)| = \infty$ for~$x$ in a set of strictly positive measure. This construction, done
for $\Omega = \R$, is based on the Dahlberg--Kenig
counterexample and can be repeated also in the periodic setting. Combining this with the above mentioned 
smoothing effect of the $1d$ cubic NLS, it is immediate to see that the convergence statement \eqref{AlmostEverywhereConv} 
fails for $s \in (\frac16, \frac14)$ for the one-dimensional cubic NLS ($p=3$). However, as observed in 
Remark \ref{Remark1}, we expect \eqref{AlmostEverywhereConv} to fail for any~$s < s^*_{\Omega^d}$ and for any $p \geq 3$.

The proofs of Theorem \ref{MainTHM2} and  Theorem \ref{MainTHM3} rely upon a combination of the following two facts. First, the randomization improves 
the integrability of the 
randomized function. This allows us to deduce uniform  convergence of the linear propagator $e^{it\Delta}f^{\omega}$ to the 
initial data $f^{\omega}$, $\mathbb{P}$-almost surely; see Propositions \ref{RandomLinCOnv1} and \ref{RandomLinCOnv2}. 
Second, we deduce a smoothing effect associated to the cubic nonlinearity. This allows us to 
control the nonlinear (Duhamel) contribution $u(x,t) - e^{it \Delta}f^{\omega}$. While in the Euclidean case (Theorem \ref{MainTHM3}), we use the  deterministic smoothing effect given in Corollary \ref{uuu}, in the periodic case (Theorem \ref{MainTHM2}) the proof is much more involved. In fact, we follow the argument of Bourgain  in \cite{bourgain1996invariant} and we start with the  Wick--reordering of  the nonlinearity. Then using more probabilistic arguments and Jarnick's theorem (namely counting lattice points on convex archs),  
as in \cite{bourgain1996invariant}, we obtain in Proposition \ref{Bourgain1/2} a precise quantification of  the amount of smoothing
(that in this case happens $\mathbb{P}$-almost surely). In our argument a quantification of the smoothing is necessary because 
we need to be sure that  the nonlinear (Duhamel) contribution sits    in $X^{s, \frac12 +}_{\delta}$ with $s > s_{\T^d}$ (we consider $d=1,2$), so that we can conclude the proof by
implementing techniques from Theorem \ref{MainTHM1}.

\subsection{Acknowledgements}

E. Compaan is supported by NSF MSPRF 1704865. 
R. Luc\`a is supported by the ERC grant 676675 FLIRT, by BERC 2018-2021, by 
BCAM Severo Ochoa SEV-2017-0718 and IHAIP project PGC2018-094528-B-I00 (AEI/FEDER, UE).
G. Staffilani is supported by NSF grants DMS 1462401 and DMS 1764403.
The authors thank Chenjie Fan for useful discussions on Propositions \ref{RandomLinCOnv1} and \ref{RandomLinCOnv2} and the referees for their 
useful comments.

\subsection{Notations and terminology} 
For a fixed $p\in \R$ we often use the notation~$p+:=p+\varepsilon$, $p-:=p-\varepsilon$, where~$\varepsilon$ is any 
sufficiently small strictly positive real number. 
When in the same inequality we have two such quantities we use the following notation to compare them. 
We write $p+ \dots + := p + \varepsilon \cdot (\mbox{number of $+$})$,
$p- \dots - := p - \varepsilon \cdot (\mbox{number of $-$})$.
We will use $C >0$ to denote several constants depending only on fixed parameters, like 
for instance the dimension $d$. The value of $C$ may clearly differ from line to line.
Let~$A, B >0$. We may write~$A \lesssim B$ if~$A \leq C B$ when~$C > 0$ is such a constant. 
We write~$A \gtrsim B$ if~$B \lesssim A$ and~$A \sim B$ when~$A \lesssim B$ and~$A \gtrsim B$. 
We write~$A \ll B$ if~$A \leq c B$ for~$c >0$ sufficiently small (and depending only on fixed parameters) 
and~$A \gg B$ if~$B \ll A$. We denote~$A \wedge B := \min (A, B)$ 
and~$A \vee B := \max (A, B)$.  We refer to the following inequality
$$
\| D^s P_N f \|_{L^{q}} \lesssim N^{s + \frac{d}{p} - \frac{d}{q} } \|  P_N f \|_{L^p}, 
\quad 
1 \leq p \leq q \leq \infty \, , 
$$ 
simply as Bernstein inequality. Here $P_{N}$ is the frequency projection on the annulus~$\xi \sim N$.

\section{Preliminaries}\label{prelim}

Let us recall that $\Omega$ denotes either $\T$ or $\R$. We denote by $B_{\rho}$ a ball of radius $\rho >0$ centered at a generic point of $\Omega^d$ or $\Z^d$.
The following Strichartz estimates are the main 
tool to study 
the nonlinear Schr\"odinger flow:
\begin{equation}\label{FullStrich}
\|e^{it\Delta} f(x)\|_{L^{p}_{x,t}(\Omega^{d+1})} \lesssim N^{\frac{d}{2} - \frac{d+2}{p} + }  \|  f \|_{L^2_{x}(\Omega^d)}, 
\quad 
p \geq 2 \left(\frac{d+2}{d}\right),
\quad 
\supp \hat{f} \subseteq B_N \, .
\end{equation}
These estimates were proved in \cite{MR512086} for $\Omega = \R$ and in \cite{BourgainDemeter} for $\Omega = \T$.  
The additional factor $N^{0+}$ is removable except when $\Omega = \T$ and $p = 2 \left(\frac{d+2}{d}\right)$; see \cite{MR1209299, MR3512894} and the references 
therein. However, we never use this finer information.  

Hereafter $\delta \in (0,1]$, sometimes we will restrict to sufficiently small values of $\delta$. The main tool used in the study of the a.e. pointwise convergence of solutions to linear Scr\"odinger equation 
to the initial data is the following maximal estimate
\begin{equation}\label{OmegaMaxEst}
\left\| \sup_{0 \leq t \leq \delta} |e^{it\Delta } f(x) | \right\|_{L^2_x(B_1)} \lesssim \| f \|_{H^{s}_x(\Omega^d)} \, .
\end{equation}
The validity of this estimate is equivalent to the fact that $e^{it\Delta } f(x) \to f(x)$ as $t \to 0$ for almost every (with respect to the Lebesgue measure) $x \in B_1$. One implication of this statement 
is
elementary; the other is a consequence of the Stein--Niki\v{s}hin maximal principle \cite{10.2307/1970308, MR0343091}.
Inequality \eqref{OmegaMaxEst}
holds for all $s > s_{\Omega^d}$ where $s_{\Omega^d}$ is defined in \eqref{Def:SOmegaD}; see the introduction and
the forthcoming Proposition \ref{MaxInTorusPartial}.

The main result of this section (Lemma \ref{MainLemma}) is that given a function 
$$F: (x, t) \in \Omega^{d} \times \R \to F(x, t) \in \mathbb{C}$$
we can bound the 
$L^{2}_x(B_1)$ norm of the associated maximal function $\sup_{0 \leq t \leq \delta} |F(x,t)|$ with an appropriate~$X^{s,b}_{\delta}$ norm of $F$; see also \cite{MR1209299}. This embedding is used to obtain pointwise convergence results for solutions to the nonlinear Schr\"odinger equation.

We recall that 
\begin{equation}\nonumber 
\| F \|_{X^{s, b}_{\delta}} := \inf_{ G = F \ \mbox{on} \ t \in [0, \delta]} \| G \|_{X^{s, b}},
\end{equation}
where 
$$
 \| F \|^2_{X^{s, b}} 
 :=  \int_{\R} \sum_{n \in \mathbb{Z}^{d}}  \langle \tau + |n|^2 \rangle^{2b} \langle n \rangle^{2s} | \widehat{F}(n, \tau)|^2 d \tau  
\quad \mbox{if} \quad  \Omega = \T \, ,
$$
 $$
  \| F \|^2_{X^{s, b}} :=  \int_{\R} \int_{\R^d}  \langle \tau + |\xi|^2 \rangle^{2b} \langle \xi \rangle^{2s} | \widehat{F}(\xi, \tau)|^2  d \xi d \tau
  \quad \mbox{if} \quad   \Omega = \R \, , 
$$
$\langle \cdot \rangle := (1 + |\cdot|^2)^{\frac12}$, and $\widehat{F}$ is the space-time Fourier transform of $F$.

The next lemma shows how to embed $X^{s,b}_{\delta}$, $b > \frac12$, into several functional spaces. The proof can be found in \cite[Lemma 2.9]{tao2006nonlinear}, 
in the case $\Omega = \R$. 
The argument adapts to $\Omega = \T$.

\begin{lemma}\label{MainLemmaGeneralized}
Let $b > \frac12$ and let $Y$ be a Banach space of functions 
$$F: (x, t) \in \Omega^{d} \times \R \to F(x, t) \in \mathbb{C} \, .$$
Let $\alpha \in \R$. Assume
\begin{equation}\label{LMERN}
\|  e^{i \alpha t} e^{it\Delta} f(x)    \|_{Y} \leq  C \| f  \|_{H^s(\Omega^d)} \, ,
\end{equation}
with a constant $C >0$ uniform over $\alpha \in \R$. Then
$$
\| F  \|_{Y} \leq C \| F \|_{X^{s,b}} \, .
$$
\end{lemma}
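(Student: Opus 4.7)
The plan is to exploit the standard transference principle that realises any $X^{s,b}$ function as a weighted superposition of modulated free solutions. First, I would start from the space--time Fourier representation of $F$ and perform the change of variables $\sigma := \tau + |n|^2$ (or $\sigma := \tau + |\xi|^2$ in the Euclidean case) to decouple the free symbol from a pure modulation in $t$. Setting
$$
\widehat{g_{\sigma}}(n) := \widehat{F}(n, \sigma - |n|^2),
$$
a direct Fourier computation yields the representation
$$
F(x, t) \;=\; \int_{\R} e^{i \sigma t} \bigl( e^{it\Delta} g_{\sigma} \bigr)(x) \, d\sigma,
$$
together with the identity
$$
\| F \|_{X^{s,b}}^2 \;=\; \int_{\R} \langle \sigma \rangle^{2b} \, \| g_{\sigma} \|_{H^s(\Omega^d)}^2 \, d\sigma,
$$
obtained by writing out both sides against the change of variables.

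Next I would apply the hypothesis \eqref{LMERN} pointwise in $\sigma$ with $\alpha = \sigma$, which gives
$$
\| e^{i \sigma t} e^{it\Delta} g_{\sigma} \|_{Y} \;\leq\; C \, \| g_{\sigma} \|_{H^s(\Omega^d)},
$$
with $C$ independent of $\sigma$. Combining this with Minkowski's inequality (in its Bochner form, applied to the integral representation of $F$) yields
$$
\| F \|_Y \;\leq\; \int_{\R} \| e^{i \sigma t} e^{it\Delta} g_{\sigma} \|_{Y} \, d\sigma \;\leq\; C \int_{\R} \| g_{\sigma} \|_{H^s(\Omega^d)} \, d\sigma.
$$
Finally, I would apply Cauchy--Schwarz against the weight $\langle \sigma \rangle^{b}$: because $b > \tfrac12$, the integral $\int_{\R} \langle \sigma \rangle^{-2b} d\sigma$ is finite, and we conclude
$$
\int_{\R} \| g_{\sigma} \|_{H^s(\Omega^d)} \, d\sigma \;\lesssim_b\; \| F \|_{X^{s,b}}.
$$
This gives $\| F \|_Y \lesssim C \| F \|_{X^{s,b}}$ with the implicit constant depending only on $b$. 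The periodic and non-periodic cases are identical up to replacing $\sum_{n \in \Z^d}$ by $\int_{\R^d} d\xi$.

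The argument is structural and presents no serious analytic difficulty; the one point requiring mild care is the justification of Minkowski's inequality in a general Banach space $Y$. Since $Y$ is only assumed to be a Banach space (rather than an $L^p$-type lattice), I would first establish the estimate for a dense class (say space--time Schwartz functions on $\Omega^d \times \R$), for which the integrand $\sigma \mapsto e^{i\sigma t} e^{it\Delta} g_{\sigma}$ is continuous and compactly supported in $\sigma$, so the Bochner integral and the Minkowski inequality are unambiguous; then one extends to general $F \in X^{s,b}$ by density, using that both sides are continuous in $F$ with respect to the $X^{s,b}$ norm. This density step is the main (and essentially routine) obstacle to writing a fully rigorous proof.
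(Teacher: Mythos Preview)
Your argument is correct and is exactly the standard transference principle the paper invokes by citing \cite[Lemma 2.9]{tao2006nonlinear}; the paper does not give its own proof but defers to that reference, whose proof proceeds precisely via the representation $F(x,t) = \int e^{i\sigma t} e^{it\Delta} g_\sigma(x)\, d\sigma$, Minkowski, the hypothesis, and Cauchy--Schwarz in $\sigma$ using $b>\tfrac12$. Your remark about the density/Bochner step is a fair caveat but is indeed routine.
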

Using Lemma \ref{MainLemmaGeneralized} with
$$
\| F \|_{Y} = \left\| \sup_{0 \leq t \leq \delta} | F(x,t) |  \right\|_{L^{2}_x(B_1)}
$$
and the fact that the maximal estimate \eqref{OmegaMaxEst} hold for $s > s_{\Omega^d}$ ($s \geq \frac14$ if $\Omega^d = \R$), we have the following 
\begin{lemma}\label{MainLemma}
Let $b > \frac12$ and $s > s_{\Omega^d}$ defined in \eqref{Def:SOmegaD}. We have
\begin{equation}\label{BouMaxEmbedding}
\left\| \sup_{0\leq t \leq \delta} | F(x,t) |  \right\|_{L^{2}_x(\Omega^d)} \lesssim 
\| F \|_{X^{s, b}_{\delta} }  \, .
 \end{equation}
If $\Omega^d = \R$ we can relax $s > s_{\R} = \frac14$ to $s \geq \frac14$.
\end{lemma}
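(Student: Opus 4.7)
The plan is to invoke the abstract transference result Lemma \ref{MainLemmaGeneralized} with the concrete Banach space
\[
Y := \Bigl\{ F : \Omega^d \times \R \to \mathbb{C} \ :\ \|F\|_Y := \bigl\| \sup_{0\leq t \leq \delta} |F(x,t)| \bigr\|_{L^2_x(B_1)} < \infty \Bigr\},
\]
and then to pass from the global $X^{s,b}$ bound it yields to the restriction norm $X^{s,b}_\delta$ by a standard extension argument.

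First I would verify the hypothesis \eqref{LMERN} for this choice of $Y$. The key observation is that $|e^{i\alpha t}| = 1$, so
\[
\sup_{0 \leq t \leq \delta} \bigl|e^{i\alpha t} e^{it\Delta} f(x)\bigr| = \sup_{0 \leq t \leq \delta} \bigl|e^{it\Delta} f(x)\bigr|,
\]
and in particular the left-hand side is trivially independent of $\alpha \in \R$. Hence the hypothesis \eqref{LMERN} reduces exactly to the linear maximal estimate \eqref{OmegaMaxEst}, which holds for all $s > s_{\Omega^d}$, and for $s \geq \frac14$ when $\Omega^d = \R$, by the results recalled in the paragraph surrounding Proposition \ref{MaxInTorusPartial}. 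Thus the hypothesis of Lemma \ref{MainLemmaGeneralized} is satisfied with a constant uniform in $\alpha$, for the stated range of $s$.

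Applying Lemma \ref{MainLemmaGeneralized} then yields, for any $b > \frac12$,
\[
\bigl\| \sup_{0 \leq t \leq \delta} |F(x,t)| \bigr\|_{L^2_x(B_1)} \lesssim \|F\|_{X^{s,b}}
\]
for every $F \in X^{s,b}$. To convert this into the restriction-norm bound \eqref{BouMaxEmbedding}, I would take any $G \in X^{s,b}$ with $G = F$ on $\Omega^d \times [0,\delta]$. Then the maximal functions of $F$ and $G$ over $[0,\delta]$ coincide, so
\[
\bigl\| \sup_{0 \leq t \leq \delta} |F(x,t)| \bigr\|_{L^2_x(B_1)} = \bigl\| \sup_{0 \leq t \leq \delta} |G(x,t)| \bigr\|_{L^2_x(B_1)} \lesssim \|G\|_{X^{s,b}},
\]
and taking the infimum over all such extensions $G$ gives the $X^{s,b}_\delta$ norm on the right, which is the claimed inequality.

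There is essentially no serious obstacle here: the entire nontrivial content has been packaged into the linear maximal estimate \eqref{OmegaMaxEst} and into Lemma \ref{MainLemmaGeneralized}. The only mild technical point to keep track of is that the relaxation $s > s_{\R} = \frac14 \rightsquigarrow s \geq \frac14$ in the Euclidean case is inherited from the corresponding endpoint version of \eqref{OmegaMaxEst} (Carleson's theorem), since the transference provided by Lemma \ref{MainLemmaGeneralized} preserves whatever range of $s$ is available for the underlying linear bound.
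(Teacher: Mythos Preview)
Your proposal is correct and follows exactly the approach the paper takes: apply Lemma~\ref{MainLemmaGeneralized} with $\|F\|_Y = \|\sup_{0\le t\le\delta}|F|\|_{L^2_x(B_1)}$, noting that $|e^{i\alpha t}|=1$ reduces the hypothesis to the linear maximal estimate \eqref{OmegaMaxEst}. Your added detail on passing from $X^{s,b}$ to the restriction norm $X^{s,b}_\delta$ by infimum over extensions is standard and correct; the paper leaves this implicit.
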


We also recall several useful estimates, repeatedly used in the paper, that can be obtained using the Strichartz estimates \eqref{FullStrich} and Lemma \ref{MainLemmaGeneralized}.
Let $\operatorname{P}_{N}$ be the frequency projection into the annulus of size $N$, namely $\{ N/2 < |\xi| \leq N \}$.   
Let $\operatorname{P}_{A}$ be the frequency projection into the set $A$. We denote by $Q_N$ a (frequency) cube of side length $N$, centered at any point. Then in the statement of the lemma below we use
$\Gamma_{N}$ to denote either $Q_N$ or the (frequency) annulus of size $N$. Thus $\operatorname{P}_{\Gamma_N}$ is either 
$\operatorname{P}_{Q_N}$ or $\operatorname{P}_{N}$. 
\begin{lemma}
Let 
$p \geq 2 \left(\frac{d+2}{d}\right)$. Then
\begin{equation}\label{ITFCOS2}
\| \operatorname{P}_{\Gamma_N} F \|_{L^{p}_{x, t}(\Omega^{d+1})} \lesssim  
N^{ \frac{d}{2} - \frac{d+2}{p} - s +} \| \operatorname{P}_{\Gamma_N} F \|_{X^{s, \frac12 +}} \, ,
\end{equation}
\begin{equation}\label{Dual1}
\|  \operatorname{P}_{\Gamma_N} F \|_{X^{0, -\frac12++}} 
\lesssim N^{0+}  \| \operatorname{P}_{\Gamma_N} F \|_{L^{2 \frac{d+2}{d+4}+}_{x, t}(\Omega^{d+1})}   \, .
\end{equation}
Let also $s > \frac{d}{2} - \frac{d+2}{p}$, then
\begin{equation}\label{ITFCOS2Summed}
\|  F \|_{L^{2\frac{d+2}{d-2s}-}_{x, t}(\Omega^{d+1})} \lesssim  \| F \|_{X^{s, \frac12+}} 
 \,.
\end{equation}
\end{lemma}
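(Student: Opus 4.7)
The plan is to prove the three estimates successively, with (ITFCOS2) essentially a direct application of Lemma \ref{MainLemmaGeneralized}, (Dual1) a duality consequence, and (ITFCOS2Summed) a Littlewood--Paley summation of (ITFCOS2).

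For (ITFCOS2) I would invoke Lemma \ref{MainLemmaGeneralized} with $Y = L^{p}_{x,t}(\Omega^{d+1})$. The hypothesis (LMERN) demands
$$\bigl\|e^{i \alpha t} e^{it\Delta} \operatorname{P}_{\Gamma_N} f\bigr\|_{L^{p}_{x,t}} \leq C \|\operatorname{P}_{\Gamma_N} f\|_{H^s(\Omega^d)}$$
uniformly in $\alpha \in \R$. Since $|e^{i\alpha t}| = 1$, the left side is independent of $\alpha$, and the Strichartz estimate (FullStrich) applied to $\operatorname{P}_{\Gamma_N} f$ (whose Fourier support is contained in a ball of radius $\lesssim N$, harmlessly enlarging constants in the cubic case) gives the bound $N^{\frac{d}{2} - \frac{d+2}{p} +}\|\operatorname{P}_{\Gamma_N}f\|_{L^2}$. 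Using frequency localization to replace $\|\operatorname{P}_{\Gamma_N}f\|_{L^2}$ by $N^{-s}\|\operatorname{P}_{\Gamma_N}f\|_{H^s}$ produces the constant $N^{\frac{d}{2} - \frac{d+2}{p} - s +}$, and Lemma \ref{MainLemmaGeneralized} yields (ITFCOS2).

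For (Dual1) I would argue by duality: since $X^{0,b}$ and $X^{0,-b}$ are dual via the $L^2_{t,x}$ pairing,
$$\|\operatorname{P}_{\Gamma_N}F\|_{X^{0,-\frac12++}} = \sup_{\|G\|_{X^{0,\frac12--}}\leq 1} \bigl|\langle \operatorname{P}_{\Gamma_N}F, G\rangle_{L^2_{t,x}}\bigr|,$$
and by self-adjointness of $\operatorname{P}_{\Gamma_N}$ we may replace $G$ by $\operatorname{P}_{\Gamma_N}G$. Hölder's inequality with the dual pair $\bigl(2\tfrac{d+2}{d+4}+,\, 2\tfrac{d+2}{d}-\bigr)$ then reduces the claim to
$$\|\operatorname{P}_{\Gamma_N}G\|_{L^{2\frac{d+2}{d}-}_{x,t}} \lesssim N^{0+}\|\operatorname{P}_{\Gamma_N}G\|_{X^{0,\frac12--}}.$$
Since Lemma \ref{MainLemmaGeneralized} requires $b > \tfrac12$, this intermediate inequality cannot be obtained by a direct embedding; instead I would complex-interpolate between the endpoint estimate $\|\operatorname{P}_{\Gamma_N}G\|_{L^{2(d+2)/d}}\lesssim N^{0+}\|\operatorname{P}_{\Gamma_N}G\|_{X^{0,\frac12+}}$ (which is (ITFCOS2) with $s=0$) and the trivial $X^{0,0} = L^2_{t,x}$. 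Choosing an interpolation parameter close to $1$ simultaneously produces a Lebesgue exponent slightly below $2(d+2)/d$ and a Sobolev exponent slightly below $\tfrac12$, while preserving the $N^{0+}$ loss.

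For (ITFCOS2Summed) I would apply a Littlewood--Paley decomposition $F = \sum_N \operatorname{P}_N F$ along dyadic annuli. Setting $p = 2\tfrac{d+2}{d-2s}-$, the exponent of $N$ in (ITFCOS2) becomes $\frac{d}{2} - \frac{d+2}{p} - s + = -\varepsilon$ for some $\varepsilon > 0$, coming from the strict hypothesis $s > \tfrac{d}{2} - \tfrac{d+2}{p}$ together with the small ``$+$'' loss. Since $p \geq 2$, the Littlewood--Paley square function inequality together with Minkowski's inequality $L^{p}(\ell^2) \supset \ell^2(L^p)$ gives
$$\|F\|_{L^{p}_{x,t}} \lesssim \Bigl(\sum_N \|\operatorname{P}_N F\|_{L^{p}_{x,t}}^2\Bigr)^{1/2} \lesssim \Bigl(\sum_N N^{-2\varepsilon} \|\operatorname{P}_N F\|_{X^{s,\frac12+}}^2\Bigr)^{1/2} \lesssim \|F\|_{X^{s,\frac12+}},$$
using that $N^{-2\varepsilon}$ is summable over dyadic $N$ and that $\sum_N \|\operatorname{P}_N F\|_{X^{s,1/2+}}^2 \sim \|F\|_{X^{s,1/2+}}^2$ (orthogonality of the projections in the $X^{s,b}$ inner product).

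The only delicate point is the endpoint-type step in (Dual1): one must be careful that the complex interpolation argument indeed keeps the loss at the dimensionally sharp $N^{0+}$ rather than a larger power, and that the periodic Strichartz endpoint (with its unavoidable $N^{0+}$ in the torus case) is compatible with the ``$--$'' on the Sobolev index. Everything else is a routine bookkeeping of the $+/-$ conventions introduced in the paper.
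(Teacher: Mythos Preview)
Your proposal is correct and follows essentially the same route as the paper: Strichartz plus the transfer principle (Lemma~\ref{MainLemmaGeneralized}) for \eqref{ITFCOS2}, interpolation with $X^{0,0}=L^2_{x,t}$ and duality for \eqref{Dual1}, and dyadic summation for \eqref{ITFCOS2Summed}. The only cosmetic differences are that the paper interpolates before dualizing (rather than after, as you do) and sums via Cauchy--Schwarz on a geometric weight rather than via the square function; both variants are equivalent here.
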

\begin{proof}
By dyadic decomposition in frequency, summing a geometric series, and then using Plancherel, we see that \eqref{FullStrich} implies
\begin{equation}\nonumber
 \| e^{it\Delta} f\|_{L^{p}_{x,t}(\Omega^{d+1} )} \lesssim   \| f \|_{H^{\frac{d}{2} - \frac{d+2}{p} +}(\Omega^d)} \, .
\end{equation}
This and Lemma \ref{MainLemmaGeneralized} imply 
\begin{equation}\label{ITFCOS2Preq}
\|  F \|_{L^{p}_{x, t}(\Omega^{d+1})} \lesssim   \| F \|_{X^{\frac{d}{2} - \frac{d+2}{p} + , \frac12 +}} \, .
\end{equation}
Now the estimate \eqref{ITFCOS2} is an immediate consequence of \eqref{ITFCOS2Preq}.
Letting $p=2 \left(\frac{d+2}{d}\right)$ in \eqref{ITFCOS2} and interpolating it with $\| \cdot \|_{X^{0,0}} = \| \cdot \|_{L^{2}_{x,t} (\Omega^{d+1})}$ we get
\begin{equation}\nonumber
\| \operatorname{P}_{\Gamma_N} F \|_{L^{2 \frac{d+2}{d} -}_{x, t} (\Omega^{d+1})} \lesssim    
N^{0+} \| \operatorname{P}_{\Gamma_N} F \|_{X^{0, \frac12 -- }} \, .
\end{equation}
Dualizing this yields \eqref{Dual1}. 
The estimate \eqref{ITFCOS2Summed} follows from
\eqref{ITFCOS2} by taking $p=2\left(\frac{d+2}{d-2s}\right)-$ and $\operatorname{P}_{\Gamma_N} = \operatorname{P}_N$, 
after again performing a dyadic frequency decomposition, summing a geometric series, and using Plancherel.

\end{proof}
We also recall some well known properties of the $X^{s, b}$ spaces that are repeatedly used in the paper; see for example \cite{MR1357398}.
Hereafter $\eta$ is a smooth cut-off of the unit interval. 
\begin{lemma}
Let $s \in \R$. 
Then
\begin{equation}\label{Basic1}
\| \eta (t) e^{it \Delta} f(x)   \|_{X^{s,  \frac12+}} \lesssim \| f \|_{H^{s}(\Omega^d)} \, ,
\end{equation}
\begin{equation}\label{Basic3} 
\left\| \eta(t)  \int_0^t e^{i(t-t') \Delta} F(\cdot,t')   dt'   \right\|_{X^{s,  \frac12 + }}
\lesssim \|   F     \|_{X^{s,  - \frac12 + }} \, ,
\end{equation}
\begin{equation}\label{Basic2} 
\|  F  \|_{X^{s, - \frac12 + }_{\delta}}
\lesssim \delta^{0+} \|  F  \|_{X^{s, - \frac12 ++}_{\delta}} \, .
\end{equation}
\end{lemma}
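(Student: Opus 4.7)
The plan is to verify each of \eqref{Basic1}, \eqref{Basic3}, \eqref{Basic2} by direct Fourier calculation, treating $\Omega = \R$ and $\Omega = \T$ in parallel (the arguments differ only in replacing $\int d\xi$ by $\sum_n$). These are the standard linear, Duhamel, and time-localization estimates for $X^{s,b}$ spaces, and all three amount to reducing the problem to an elementary one-variable statement about the smooth cutoff $\eta$.

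For \eqref{Basic1}, I would compute the space-time Fourier transform of $\eta(t)e^{it\Delta}f(x)$ and observe that it equals $\hat{\eta}(\tau+|\xi|^2)\hat{f}(\xi)$. Inserting this into the definition of the $X^{s,b}$ norm and performing the change of variables $\tau \mapsto \tau - |\xi|^2$ factorizes the integral as
\[
\|\eta(t)e^{it\Delta}f\|_{X^{s,b}}^2 \;=\; \Big(\int_{\R}\langle\tau\rangle^{2b}|\hat{\eta}(\tau)|^2\,d\tau\Big)\cdot \|f\|_{H^s(\Omega^d)}^2.
\]
Since $\eta$ is Schwartz, $\hat{\eta}$ has rapid decay, so the first factor is finite for any $b$, in particular for $b = \tfrac12 +$.

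For \eqref{Basic3}, I would proceed along the classical route (cf.\ the proof in \cite{tao2006nonlinear}). Write $F(x,t) = \int \widehat{F}(\xi,\tau')e^{i(x\cdot\xi+t\tau')}$ on the Fourier side, plug into Duhamel, and compute the $t'$-integral explicitly, which yields two contributions: one whose space-time Fourier transform is essentially $\widehat{F}(\xi,\tau')/(\tau'+|\xi|^2)$ multiplied by the Fourier transform of $\eta(t)e^{it\Delta}(\cdot)$, and a second of the form $\eta(t)e^{it\Delta}g$ where $g$ has Fourier transform $\int \widehat{F}(\xi,\tau')/(\tau'+|\xi|^2)\,d\tau'$. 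For the first, the symbol manipulation shifts a factor of $\langle\tau'+|\xi|^2\rangle^{-1}$ onto $\widehat{F}$, which is exactly what is needed to pass from $-\tfrac12+$ to $\tfrac12+$. The second piece is handled by \eqref{Basic1} together with a Cauchy--Schwarz bound (in $\tau'$) that converges because $2(\tfrac12-) > 1$. A smooth decomposition of $F$ into high and low modulation with respect to the threshold $|\tau'+|\xi|^2|\sim 1$ keeps both estimates honest.

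For \eqref{Basic2}, I would pick any extension $G$ of $F$ from $[0,\delta]$ to $\R$ which realizes the $X^{s,-\tfrac12++}_\delta$ norm (up to a constant), and then multiply by $\eta(t/\delta)$ to produce an admissible extension for the left-hand side. The key mechanism is that multiplication by $\eta(t/\delta)$ convolves the $\tau$-variable with $\delta\hat\eta(\delta\cdot)$, and Schur/Hölder in $\tau$ against this kernel trades one power of $\langle\tau+|\xi|^2\rangle$ for a factor $\delta^{0+}$; this is the only place where a genuine (though routine) one-variable integral estimate is needed. The main obstacle in writing this cleanly is bookkeeping of the $\varepsilon$'s in the $-\tfrac12+$ vs.\ $-\tfrac12++$ regularities, which I would handle by fixing the precise values at the outset rather than carrying abstract $+$'s through the Fourier computation.
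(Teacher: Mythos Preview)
The paper does not actually prove this lemma; it simply states it as a ``well known'' property of $X^{s,b}$ spaces and refers the reader to \cite{MR1357398} (Kenig--Ponce--Vega). Your outline is the standard argument one finds in that reference and in \cite{tao2006nonlinear}, and it is correct. One small point on \eqref{Basic2}: for $\eta(t/\delta)G$ to still be an extension of $F$ on $[0,\delta]$ you need $\eta \equiv 1$ on $[0,1]$, which is consistent with the paper's convention that $\eta$ is a smooth cut-off of the unit interval; with that in hand, the one-variable estimate you describe (gaining $\delta^{b-b'}$ when passing from $X^{s,b}$ to $X^{s,b'}$ with $b' < b$, here $b = -\tfrac12++$ and $b' = -\tfrac12+$) is exactly the standard mechanism.
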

We end this section with a bilinear estimate in the space $\R^2$ and in $\R$. We first prove the result in $\R^2$. This is the harder case, and a proof has already appeared  in  \cite{MR1828607}; we report it below for completeness.  The analogous result in $\R$ is easier and can be proved with similar techniques. These blinear estimates are used to obtain smoothing results for the nonlinear (Duhamel) part of the solution.

 We start with  some notation. For dyadic numbers 
$M_0, M_1, M_2$ we set $M_*=\min (M_0, M_1, M_2)$ and $M^*=\max (M_0, M_1, M_2)$. We use the notation
$f\chi_{\{|\mu|\sim M\}}=:f_M$ for the restriction to a dyadic annulus. We then define
$$\int_{\tau_0+\tau_1+\tau_2=0, \\ \mu_0+\mu_1+\mu_2=0}=:\int_{*}$$
and 
\begin{equation}\label{C+-}
C_{\mp}(f;g,h)=\int_{*}f(\mu_0,\tau_0)\frac{g(\mu_1,\tau_1)}{(1+|\tau_1-|\mu_1|^2)^b}\frac{h(\mu_2,\tau_2)}{(1+|\tau_2\pm|\mu_2|^2)^b} d \tau_1 d \tau_2 d \mu_1 d \mu_2,
\end{equation}
with $b > \frac12$.

\begin{lemma}[{\cite[Lemma 1]{MR1828607}}]\label{bilinear}  Assume we are in $\R^2$.
The following estimates hold
$$|C_{+}(f_{M_0};g_{M_1},h_{M_2})|\lesssim \left(\frac{M_*}{M^*}\right)^{\frac12}\|f_{M_0}\|_{L^2}\|g_{M_1}\|_{L^2}\|h_{M_2}\|_{L^2},$$
$$|C_{-}(f_{M_0};g_{M_1},h_{M_2})|\lesssim \left(\frac{M_1\wedge M_2}{M_1\vee M_2}\right)^{\frac12}\|f_{M_0}\|_{L^2}\|g_{M_1}\|_{L^2}\|h_{M_2}\|_{L^2}.$$
\end{lemma}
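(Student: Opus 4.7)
I plan to follow the classical reduction of $X^{s,b}$-weighted trilinear forms to bilinear Strichartz-type estimates for free solutions. Since $b > \tfrac12$, the paraboloid weights $(1+|\tau_i\mp|\mu_i|^2|)^{-b}$ can be absorbed by a parameter integration: writing the inverse space-time Fourier transforms of $g/(1+|\tau_1-|\mu_1|^2|)^b$ and $h/(1+|\tau_2\pm|\mu_2|^2|)^b$ as superpositions, indexed by an auxiliary variable $\sigma$, of frequency-localized $L^2$ data propagated by the free Schr\"odinger flow (respectively the complex conjugate flow in the $C_-$ case), Plancherel in $(\xi_0,\tau_0)$ and Cauchy-Schwarz in $\sigma$ (which is summable because $b > \tfrac12$) reduce the estimate to the bilinear bound
\begin{equation*}
\|P_{M_0}(e^{it\Delta}v_1 \cdot e^{\pm it\Delta}v_2)\|_{L^2_{x,t}(\R^2\times\R)} \lesssim C_\pm(M_0,M_1,M_2)\,\|v_1\|_{L^2}\|v_2\|_{L^2},
\end{equation*}
uniformly in the auxiliary parameters, where $\widehat{v_i}$ is supported on $|\mu_i|\sim M_i$, the sign $+$ corresponds to $C_+$, the sign $-$ to $C_-$, and the targets are $C_+ = (M_*/M^*)^{1/2}$ and $C_- = ((M_1\wedge M_2)/(M_1\vee M_2))^{1/2}$.

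\textbf{Geometric reformulation.} By Plancherel in the space-time Fourier variables,
\begin{equation*}
\|P_{M_0}(e^{it\Delta}v_1\cdot e^{\pm it\Delta}v_2)\|_{L^2_{x,t}}^2 \lesssim \int_{|\xi_0|\sim M_0}\!\int_\R |K_\pm(\xi_0,\tau_0)|^2\,d\tau_0\,d\xi_0,
\end{equation*}
where $K_\pm(\xi_0,\tau_0) = \int \widehat{v_1}(\xi_1)\,\widehat{v_2}(\pm(\xi_0-\xi_1))\,\delta(\tau_0-\phi_\pm(\xi_0,\xi_1))\,d\xi_1$ with $\phi_+(\xi_0,\xi_1)=|\xi_1|^2+|\xi_0-\xi_1|^2$ and $\phi_-(\xi_0,\xi_1)=|\xi_1|^2 - |\xi_0-\xi_1|^2 = 2\xi_0\cdot\xi_1-|\xi_0|^2$. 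Rewriting the delta via the coarea formula as $d\mathcal{H}^1(\xi_1)/|\nabla_{\xi_1}\phi_\pm|$ on the level set $\Gamma_\pm = \{\phi_\pm=\tau_0\}$ and applying Cauchy-Schwarz in $\xi_1$ yields
\begin{equation*}
|K_\pm(\xi_0,\tau_0)|^2 \lesssim \left(\int_{\Gamma_\pm\cap S} \frac{d\mathcal{H}^1}{|\nabla_{\xi_1}\phi_\pm|}\right)\left(\int_{\Gamma_\pm\cap S}|\widehat{v_1}(\xi_1)|^2|\widehat{v_2}(\xi_0-\xi_1)|^2\frac{d\mathcal{H}^1}{|\nabla_{\xi_1}\phi_\pm|}\right),
\end{equation*}
where $S=\{|\xi_1|\sim M_1,\ |\xi_0-\xi_1|\sim M_2\}$. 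After integrating in $(\xi_0,\tau_0)$ the second factor becomes $\|v_1\|_{L^2}^2\|v_2\|_{L^2}^2$ by Fubini, so the sharp constant reduces to bounding the geometric quantity $\sup_{\xi_0,\tau_0}\mathrm{length}(\Gamma_\pm\cap S)/|\nabla_{\xi_1}\phi_\pm|$.

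\textbf{Geometric computation and main obstacle.} For $C_+$ the level set $\{\phi_+=\tau_0\}$ is a circle in the $\xi_1$-plane centered at $\xi_0/2$; its arc in the two annuli $S$ has length $\lesssim M_*$, and a dyadic case analysis in the relative sizes of $M_0,M_1,M_2$ shows $|\nabla_{\xi_1}\phi_+|=2|2\xi_1-\xi_0|\gtrsim M^*$ on that arc, yielding the factor $(M_*/M^*)^{1/2}$. For $C_-$ the level set is a straight line orthogonal to $\xi_0$; its portion in $S$ has length $\lesssim M_1\wedge M_2$, while $|\nabla_{\xi_1}\phi_-|=2|\xi_0|\sim M_1\vee M_2$ by conservation of frequency when $M_1\not\sim M_2$ (the case $M_1\sim M_2$ makes the target factor trivial), giving $((M_1\wedge M_2)/(M_1\vee M_2))^{1/2}$. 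I expect the main obstacle to be the case analysis for $C_+$, specifically ruling out the degenerate configuration $\xi_1\approx \xi_0/2$ where $|\nabla_{\xi_1}\phi_+|$ could be small; this is excluded only by carefully exploiting the dyadic annular constraints together with the identity $|\xi_1|^2+|\xi_0-\xi_1|^2 = \tau_0$, which pins the arc geometry to the appropriate portion of the circle in each dyadic regime.
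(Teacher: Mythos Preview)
The paper does not actually supply its own proof of this lemma: despite the phrase ``we report it below for completeness'' preceding the statement, the paper merely states the result with a citation to \cite{MR1828607} and proceeds directly to Corollary~\ref{uuu}. Your approach---reducing via the $b>\tfrac12$ transfer principle to a bilinear estimate for free solutions, then applying Plancherel and the coarea formula so that the constant is governed by $\sup_{\xi_0,\tau_0}\mathrm{length}(\Gamma_\pm\cap S)/|\nabla_{\xi_1}\phi_\pm|$---is correct and is the standard method used in the cited reference.

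Your flagged obstacle for $C_+$, the possible degeneracy $\xi_1\approx\xi_0/2$, is not a genuine difficulty. Writing $2\xi_1-\xi_0=\xi_1-\xi_2$ (with $\xi_2=\xi_0-\xi_1$) shows that smallness here forces $\xi_1\approx\xi_2$, hence $|\xi_2|\sim M_1$ and $|\xi_0|=|\xi_1+\xi_2|\sim M_1$; thus this configuration can occur only when $M_0\sim M_1\sim M_2$, in which case $M_*/M^*\sim 1$ and the bound follows immediately from the $L^4_{x,t}$ Strichartz estimate on $\R^2$ via H\"older, bypassing the coarea argument entirely. In every other dyadic regime the constraint that the two largest of $M_0,M_1,M_2$ are comparable forces $|\xi_1-\xi_2|\gtrsim M^*$, as you indicate.
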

We then have the following corollary.
\begin{cor}\label{uuu} Assume we are in $\R^d$, $d=1,2$,  $b>b'>\frac12$ and $s\geq 0$. Then if $\sigma < \min (2s, 1)$ we have 
$$\||u|^2u\|_{X^{s+\sigma,b'-1}}\lesssim \|u\|_{X^{s,b}}^3.$$
\end{cor}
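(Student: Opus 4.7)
By duality, since $\bigl(X^{s+\sigma,b'-1}\bigr)^{*}=X^{-(s+\sigma),1-b'}$ and $|u|^{2}u=u\cdot u\cdot\bar u$, the corollary is equivalent to the four-linear bound
\[
\left|\int u\cdot u\cdot \bar u\cdot \bar v\,dx\,dt\right|\lesssim \|u\|_{X^{s,b}}^{3}\,\|v\|_{X^{-(s+\sigma),1-b'}}.
\]
I perform a dyadic Littlewood--Paley decomposition $u=\sum_{j=1}^{3}u_{N_{j}}$ and $v=\sum v_{N_{0}}$. The Fourier support condition $\xi_{1}+\xi_{2}-\xi_{3}-\xi_{0}=0$ forces the two largest dyadic scales among $\{N_{0},N_{1},N_{2},N_{3}\}$ to be comparable; by symmetry I may assume $N_{1}\geq N_{2}\geq N_{3}$, leaving a bounded number of interaction geometries to analyze.

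For each geometry I regroup the integrand as a Plancherel/Cauchy--Schwarz pairing of two bilinear products whose factors carry opposite dispersion signs, for example $(u_{N_{1}}\bar u_{N_{2}})\cdot(u_{N_{3}}\bar v_{N_{0}})$, so that the $C_{-}$ case of Lemma \ref{bilinear} applies to each factor. A factor of the form $u\bar u$ is controlled directly:
\[
\|u_{N}\bar u_{N'}\|_{L^{2}_{x,t}}\lesssim \bigl(\tfrac{N\wedge N'}{N\vee N'}\bigr)^{\!1/2}\|u_{N}\|_{X^{0,b}}\|u_{N'}\|_{X^{0,b}}.
\]
A factor of the form $u\bar v$ is subtler, since $v$ has modulation weight $1-b'$ rather than the $b>\tfrac12$ required by Lemma \ref{bilinear}. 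I handle this by a further dyadic modulation decomposition $v_{N_{0}}=\sum_{L_{0}}v_{N_{0},L_{0}}$ together with a dual application of Lemma \ref{bilinear}, trading the missing modulation regularity against the identity $\|v_{N_{0},L_{0}}\|_{L^{2}}\sim N_{0}^{s+\sigma}L_{0}^{-(1-b')}\|v_{N_{0},L_{0}}\|_{X^{-(s+\sigma),1-b'}}$, and using the cushion $b>b'$ to reassemble the $L_{0}$ series.

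Reassembling the Sobolev weights $\langle N_{j}\rangle^{-s}$ from the $u$'s, the weight $\langle N_{0}\rangle^{s+\sigma}$ from the duality, and the bilinear gains $\bigl(\tfrac{N_{1}\wedge N_{2}}{N_{1}\vee N_{2}}\bigr)^{1/2}\bigl(\tfrac{N_{3}\wedge N_{0}}{N_{3}\vee N_{0}}\bigr)^{1/2}$, each dyadic contribution is bounded by the product of the $X^{s,b}$ and $X^{-(s+\sigma),1-b'}$ norms times an explicit function of the frequencies. The bilinear gains furnish effective smoothing of up to $\langle N^{*}\rangle^{-1}$ in high-high interactions; combined with the Sobolev weights, the dyadic geometric series converge precisely when $\sigma<\min(2s,1)$. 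The restriction $\sigma<2s$ arises from balancing $\langle N_{0}\rangle^{s+\sigma}$ against two $\langle N_{j}\rangle^{-s}$ factors in high-output configurations, and $\sigma<1$ saturates the maximal smoothing coming from the product of two bilinear gains. The main technical obstacle is the modulation mismatch on $v$, which is dispatched by the decomposition above; the $d=1$ case proceeds analogously with the corresponding $\R$-bilinear estimate, which is simpler.
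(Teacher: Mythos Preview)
Your approach is essentially the same as the paper's: duality, Littlewood--Paley decomposition, Cauchy--Schwarz into two bilinear factors, and application of Lemma~\ref{bilinear}. The paper organizes the case split differently---it keeps track of whether the conjugated factor $\bar u$ sits at a higher or lower frequency than the leading $u$ and uses the $C_{+}$ pairing $(u_{M_1}u_{M_3})(\bar u_{M_2}\bar v_{M_0})$ in the first case, the $C_{-}$ pairing $(u_{M_1}\bar u_{M_2})(\bar v_{M_0}u_{M_3})$ in the second---whereas you propose to always pair opposite-sign factors and apply $C_{-}$. Both routes yield the same frequency gain $(N_{2}N_{3})^{1/2}/N_{1}$ in the decisive high-output regime $N_{0}\sim N_{1}$, so the bookkeeping leading to $\sigma<\min(2s,1)$ is identical.

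Two cautions. First, your assertion ``by symmetry I may assume $N_{1}\geq N_{2}\geq N_{3}$'' is not quite WLOG: only the two unconjugated $u$'s are interchangeable, so the position of $\bar u$ among the ordered frequencies is an honest case distinction, and your displayed gain $(N_{1}\wedge N_{2}/N_{1}\vee N_{2})^{1/2}(N_{3}\wedge N_{0}/N_{3}\vee N_{0})^{1/2}$ corresponds to one particular placement. The other placements require a different $C_{-}$ pairing (e.g.\ $(u_{N_1}\bar u_{N_3})(u_{N_2}\bar v_{N_0})$), which you hint at with ``for each geometry'' but do not make explicit. Second, your modulation decomposition of $v$ is a correct and more detailed way to handle the weight mismatch $1-b'<\tfrac12$; the paper dispatches this in one line by noting that the cushion $b>b'$ allows one to pretend $v\in X^{0,b}$ at the cost of an $\varepsilon$-loss on the top frequency, which is absorbed by the strict inequality $\sigma<\min(2s,1)$.
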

\begin{proof}
We describe the proof in dimension $d=2$, which is the hardest case. At the end of the proof we comment on the case $d=1$.
Using a dyadic decomposition and duality we need to estimate for $v\in X^{0,1-b'}$
$$M_0^{\sigma + s}\left|\int u_{M_1}\bar u_{M_2} u_{M_3}\bar v_{M_0} dx dt \right|.$$
Without loss of generality we can assume that $M_1\geq M_3$. Also, below we present the calculation as if  $v\in X^{0,b}$, but we can adjust this by possibly losing an $\varepsilon$ on the highest frequency, and this can be done by assuming that $b'<b$. We then consider two cases, when 
$M_1\geq  M_2$ or when $M_1\leq M_2$.  In the first case the most dangerous situation is when 
$M_0\sim M_1$. In this case we have 
$$M_0^{\sigma+s}\left|\int u_{M_1}\bar u_{M_2} u_{M_3}\bar v_{M_0} dxdt\right|\lesssim M_0^{\sigma+s}\|u_{M_1}u_{M_3}\|_{L^2}
\|\bar u_{M_2} \bar v_{M_0}\|_{L^2}.$$
After renormalizing and using the lemma above with $C_{+}$ we can continue with 
\begin{eqnarray*}&\lesssim& M_0^{\sigma+s} M_1^{-\frac12}M_3^{\frac12}M_0^{-\frac12}M_2^{\frac12}\|u_{M_1}\|_{X^{0,b}}\|u_{M_2}\|_{X^{0,b}}\|u_{M_3}\|_{X^{0,b}}\|v_{M_0}\|_{X^{0,b}}\\
&\lesssim& \label{R^2-Smoothing}
M_0^{\sigma-1}M_3^{\frac12-s}M_2^{\frac12-s}\|u_{M_1}\|_{X^{s,b}}\|u_{M_2}\|_{X^{s,b}}\|u_{M_3}\|_{X^{s,b}}\|v_{M_0}\|_{X^{0,b}}. 
\end{eqnarray*}
If $s\leq \frac12$ then we need $\sigma-2s<0$ and hence $\sigma<2s$. If $s> \frac12$ then we need
$\sigma<1$.
Assume  now that  $M_1\leq M_2$ and that again $M_0\sim M_2$. With a similar argument we  estimate 
$$M_0^{\sigma+s}\left|\int u_{M_1}\bar u_{M_2} u_{M_3}\bar v_{M_0} dxdt\right|\lesssim M_0^{\sigma+s}\|u_{M_1}\bar u_{M_2}\|_{L^2}
\|\bar v_{M_0}  u_{M_3}\|_{L^2}.$$
After renormalizing and using the lemma above with $C_{-}$ we obtain a similar  result.

The previous argument (and Lemma \ref{bilinear}) adapts to the case $d=1$; see also \cite{MR1357398}. Alternatively one can deduce them from  
the case $d=2$ using 
Lemmata 3.1 and 3.6 in \cite{MR1854113}.

\end{proof}

\section{Deterministic Results} 

\subsection{The Linear Schr\"odinger Equation on $\mathbb{T}^d$}\label{Deterministic ResultsLinear}

\

 In this section we focus on maximal estimates of the linear Sch\"odinger flow
\begin{equation}\label{MaxInTorus}
\left\| \sup_{0 \leq t \leq 1} |e^{it\Delta } f(x) | \right\|_{L^2_x(\mathbb{T}^d)} \lesssim \| f(x) \|_{H^{s}_x(\mathbb{T}^d)} \, .
\end{equation}
As mentioned in Section 2, it is standard that this estimate implies pointwise convergence $e^{it\Delta } f(x) \to f(x)$ as $t \to 0$ for almost every
$x \in \mathbb{T}^d$. The problem of identifying the minimal regularity $s$ for which   
\eqref{MaxInTorus} holds is still open. The following result has been proved in \cite{MR2409184} when $d=1$ and in \cite{WangZhang} when $d\geq2$. The proof is based on Strichartz estimates. 
However, comparing with \cite{WangZhang}, the exponent in the next proposition is better for $d \geq 3$ due to the use of the optimal periodic Strichartz estimates 
from \cite{BourgainDemeter}. Thus, we recall the proof for the sake of completeness.

\begin{proposition}\label{MaxInTorusPartial}
The inequality \eqref{MaxInTorus} holds for all $s > \frac{d}{d+2}$.  
\end{proposition}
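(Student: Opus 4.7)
The plan is to decompose $f$ dyadically in frequency, control the maximal function on each Littlewood--Paley block by combining the critical periodic Strichartz inequality \eqref{FullStrich} with a fundamental theorem of calculus identity, and then sum the resulting block estimates. Since $\mathbb{T}^d$ is compact, $\|\cdot\|_{L^2_x(\mathbb{T}^d)} \lesssim \|\cdot\|_{L^p_x(\mathbb{T}^d)}$ for any $p \geq 2$, so one may harmlessly work in $L^p_x$. The natural choice will turn out to be $p = 2(d+2)/d$, the critical periodic Strichartz endpoint, for which \eqref{FullStrich} on frequency-localized data reads $\|e^{it\Delta} P_N f\|_{L^p_{x,t}(\mathbb{T}^{d+1})} \lesssim N^{0+} \|P_N f\|_{L^2}$.

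For a single block $G(x,t) := e^{it\Delta} P_N f(x)$, I would apply the identity
\begin{equation*}
\sup_{t \in [0,1]} |G(x,t)|^p \leq |G(x,0)|^p + p \int_0^1 |G(x,t)|^{p-1} |\partial_t G(x,t)|\, dt,
\end{equation*}
which follows from the fundamental theorem of calculus applied to $|G|^p$ and the chain rule identity $\partial_t |G|^p = p|G|^{p-2}\Re(\overline{G}\partial_t G)$. Integrating in $x$ and applying H\"older yields
\begin{equation*}
\left\| \sup_{t \in [0,1]} |G| \right\|_{L^p_x}^p \lesssim \|P_N f\|_{L^p_x}^p + \|G\|_{L^p_{x,t}}^{p-1}\|\partial_t G\|_{L^p_{x,t}}.
\end{equation*}
Since $G$ is frequency localized on the annulus $|\xi|\sim N$, Bernstein gives $\|\partial_t G\|_{L^p_{x,t}} = \|\Delta G\|_{L^p_{x,t}} \lesssim N^2\|G\|_{L^p_{x,t}}$ and $\|P_N f\|_{L^p_x} \lesssim N^{d/2-d/p}\|P_N f\|_{L^2}$. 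For $p = 2(d+2)/d$ a direct computation yields $d/2 - d/p = d/(d+2)$ and $2/p = d/(d+2)$. Combining these three ingredients with the critical Strichartz bound, both terms on the right balance at $N^{2 + p\varepsilon}\|P_N f\|_{L^2}^p$; taking $p$-th roots and using $L^p_x \hookrightarrow L^2_x$ on $\mathbb{T}^d$ produces
\begin{equation*}
\left\|\sup_{0\leq t \leq 1}|e^{it\Delta} P_N f|\right\|_{L^2_x(\mathbb{T}^d)} \lesssim N^{\frac{d}{d+2}+\varepsilon}\|P_N f\|_{L^2(\mathbb{T}^d)}.
\end{equation*}

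Finally, writing $f = \sum_N P_N f$, using Minkowski, and applying Cauchy--Schwarz with weights $N^{\pm(s-d/(d+2)-\varepsilon)}$ gives $\sum_N N^{d/(d+2)+\varepsilon}\|P_N f\|_{L^2} \lesssim \|f\|_{H^s}$ whenever $s > d/(d+2)$, which yields \eqref{MaxInTorus}. I do not anticipate any serious technical obstacle; the argument is essentially an exponent-matching calculation, and its only delicate point is the recognition that at the critical Strichartz endpoint $p = 2(d+2)/d$ the Bernstein loss $N^{2/p}$ incurred by replacing $\partial_t$ with $\Delta$ coincides exactly with the Bernstein loss $N^{d/2 - d/p}$ from the initial-data term, so no other choice of $p$ would produce a better exponent.
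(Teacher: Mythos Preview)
Your proof is correct and follows essentially the same approach as the paper: dyadic frequency decomposition, a fundamental-theorem-of-calculus identity to control the supremum in $t$ by space-time $L^p$ norms, Bernstein for $\partial_t = i\Delta$, and the critical periodic Strichartz estimate at $p = 2(d+2)/d$. The only cosmetic difference is that the paper uses Lee's inequality $\sup_t|\phi| \lesssim |\phi(0)| + \alpha^{1/p-1}\|\partial_t\phi\|_{L^p_t} + \alpha^{1/p}\|\phi\|_{L^p_t}$ and then optimizes in the parameter $\alpha$ (choosing $\alpha = N^2$), whereas you apply the fundamental theorem of calculus directly to $|G|^p$ and use H\"older, which automatically achieves the same balance without an explicit parameter; the resulting block estimate and the summation over $N$ are identical.
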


\begin{proof}
By dyadic frequency decomposition (here $N \in 2^{\mathbb N}$) it is sufficient to prove that  
\begin{equation}\label{MaxInTorusEquiv}
\left\| \sup_{0 \leq t \leq 1} |e^{it\Delta } \operatorname{P}_N f |\right\|_{L^2(\T^d)} \lesssim N^{s} \| \operatorname{P}_N f \|_{L^{2}(\T^d)} \, 
\end{equation}
holds for all $ s > \frac{d}{d+2}$;
we recall that $\operatorname{P}_N$ is the frequency projection into the annulus of size $N$.

We actually prove the stronger estimate 
\begin{equation}\label{MaxInTorusEquivStrong}
\left\| \sup_{0 \leq t \leq 1} |e^{it\Delta } \operatorname{P}_N f | \right\|_{L^{2\frac{d+2}{d}}(\T^d)} \lesssim N^{\frac{d}{d+2}+} \| \operatorname{P}_N f \|_{L^{2}(\T^d)} \, 
\end{equation}
We use the following inequality (see \cite{MR2264734}), that holds by the Fundamental Theorem of Calculus and H\"older's inequality, 
\begin{equation}\label{LeeIneq}
\sup_{0 \leq t \leq 1} |\phi(t)| \lesssim |\phi(0)| + \alpha^{\frac{1}{p} - 1} \| \partial_t \phi(t) \|_{L^{p}_{t}([0, 1])} 
+ \alpha^{\frac{1}{p}} \|  \phi(t) \|_{L^{p}_{t}([0, 1])}\, , 
\end{equation}
with $\phi(t) = e^{it\Delta } \operatorname{P}_N f(x)$. The parameter $\alpha > 0$ will be chosen later in such a way as to equalize
the second and third term on the right hand side of 
\eqref{LeeIneq}.
Since $\partial_t e^{it\Delta} \operatorname{P}_N f(x) = i \Delta e^{it\Delta} \operatorname{P}_N f(x)$ we obtain 
\begin{align}\label{HWCONVTORUS}
& \sup_{0 \leq t \leq 1} |e^{it\Delta } \operatorname{P}_N f(x)|
\\ \nonumber
&  \lesssim |\operatorname{P}_N f(x)| + \alpha^{\frac{1}{p} - 1} \|   \Delta e^{it\Delta}  \operatorname{P}_N f(x) \|_{L^{p}_{t}([0,1])} 
+ \alpha^{\frac{1}{p}} \|  e^{it\Delta} \operatorname{P}_N f(x) \|_{L^{p}_{t}([0,1])}   
\\ \nonumber
&  \lesssim |\operatorname{P}_N f(x)| + \alpha^{\frac{1}{p} - 1} N^2  \|   e^{it\Delta}  \operatorname{P}_N f(x) \|_{L^{p}_{t}([0,1])} 
+ \alpha^{\frac{1}{p}}  \|  e^{it\Delta} \operatorname{P}_N f(x) \|_{L^{p}_{t}([0,1])}.   
\end{align}  
Specifying $\alpha = N^{2}$ and taking the $L^{p}_x(\T^d)$ norm of \eqref{HWCONVTORUS} we arrive at
\begin{align}
&\left\| \sup_{0 \leq t \leq 1} |e^{it\Delta } \operatorname{P}_N f| \right\|_{L^{p}(\T^d)}
 \lesssim \| \operatorname{P}_N f \|_{L^{p}(\T^d)} +
N^{\frac{2}{p}}   \|   e^{it\Delta}  \operatorname{P}_N f \|_{L^{p}( \T^d \times [0,1]  )}
\\ \nonumber
& \hspace{1in} \lesssim N^{\frac{d}{2} - \frac{d}{p}} \| \operatorname{P}_N f \|_{L^{2}(\T^d)} +
N^{\frac{2}{p}}   \|   e^{it\Delta}  \operatorname{P}_N f \|_{L^{p}( \T^d \times [0,1] )}
 \, ,
\end{align}   
where in the second estimate we used Bernstein's inequality.
Letting $p = 2\left( \frac{d+2}{d}\right)$ and using the Strichartz estimates \eqref{FullStrich} we obtain \eqref{MaxInTorusEquivStrong}.

\end{proof}
In the proof of Proposition \ref{MaxInTorusPartial} one can replace $\T^d$ by $\R^d$. However, in the latter 
case more powerful techniques are available.

We now  analyze the sharpness of the maximal estimate \eqref{MaxInTorus}. We exhibit a counterexample which shows that in fact 
the maximal estimate \eqref{MaxInTorus} fails for sufficiently rough data. Proposition \ref{couter} below  
follows adapting the non periodic counterexamples to the periodic setting. 
Remarkably, it has been proved in \cite{MR2409184} that in the case $d=1$ there are different counterexamples which lead to the 
necessity of $s > 1/4$, exploiting the quadratic Gauss summation. Here we give another one dimensional counterexample for the necessity of $s > 1/4$, 
based on the Galilean 
invariance of the Schr\"odinger equation. It is worth to mention that the quadratic Gauss summation approach is instead related 
to the pseudoconformal invariance. The 
equivalence between these symmetries in the convergence problem has been 
already observed when the (linear) problem was settled on $\R^d$, comparing 
the counterexmples in \cite{1608.07640, MR3613507} and \cite{Bourgain2016, MR3903115}. 

\begin{proposition}\label{couter}
The inequality \eqref{MaxInTorus} fails for all $s < \frac{d}{2(d+1)}$.
 \end{proposition}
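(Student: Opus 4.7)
The plan is to produce counterexamples along two complementary lines: for general $d$, by transferring a Euclidean counterexample to $\T^d$; for $d=1$, by a direct Galilean computation. Bourgain's construction in \cite{Bourgain2016} (or the alternative in \cite{MR3903115}) provides, for each $s<s_0:=\frac{d}{2(d+1)}$, a sequence $\{\tilde f_N\}\subset L^2(\R^d)$ with $\hat{\tilde f}_N$ supported in $\{|\xi|\sim N\}$ and ratio
\[
\frac{\bigl\|\sup_{0\le t\le 1}|e^{it\Delta}\tilde f_N|\bigr\|_{L^2(B_1)}}{\|\tilde f_N\|_{H^s(\R^d)}}\longrightarrow\infty \quad \text{as } N\to\infty.
\]
Rescaling $\tilde f_N$ in space and adjusting the time and frequency scales accordingly, one can arrange that the new function $f_N$ is spatially concentrated inside a fundamental domain of $\T^d$ and that its bad maximal behavior occurs on a time window $[0,\tau_N]$ short enough that the wave packets of $e^{it\Delta_{\R^d}}f_N$ do not wrap around $\T^d$. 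Setting $F_N:=f_N|_{\T^d}$, the Euclidean and periodic flows agree on a large portion of $\T^d\times[0,\tau_N]$, which transfers the blow-up of the ratio to $\T^d$. Since no arithmetic property of $\Z^d$ enters, the same argument works on irrational tori as well.

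For the second route (specific to $d=1$), I would take $f_N$ on $\T$ defined by $\hat f_N(n)=\mathbf{1}_{[N,N+\sqrt N]\cap\Z}(n)$, so that $\|f_N\|_{H^s(\T)}^2\sim N^{2s+1/2}$. Writing $n=N+m$ and factoring out the modulation,
\[
e^{it\Delta}f_N(x)=e^{i(Nx-N^2 t)}\sum_{0\le m\le\sqrt N}e^{i(m(x-2Nt)-m^2 t)}.
\]
For $t\in[0,N^{-1}]$ and any $x\in\T$ lying within torus distance $cN^{-1/2}$ of $2Nt$, both $m(x-2Nt)$ and $m^2 t$ are bounded by an absolute constant uniformly in $m\in[0,\sqrt N]$, so the inner sum has modulus $\gtrsim\sqrt N$. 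As $t$ sweeps $[0,N^{-1}]$, the point $2Nt$ traces an interval of length $2$ on $\T$; thickening by $cN^{-1/2}$ yields a set of Lebesgue measure bounded below by an absolute constant. Therefore $\bigl\|\sup_{0\le t\le 1}|e^{it\Delta}f_N|\bigr\|_{L^2(\T)}^2\gtrsim N$, which together with $\|f_N\|_{H^s(\T)}^2\sim N^{2s+1/2}$ forces $s\ge 1/4=s_0|_{d=1}$ in \eqref{MaxInTorus}. This recovers the sharp one-dimensional threshold via Galilean invariance, complementing the Gauss-sum proof of \cite{MR2409184}.

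The delicate point is the transfer step for $d\ge 2$: one has to choose the rescaling so as to simultaneously fit $f_N$ into a fundamental domain of $\T^d$, prevent wave packets from wrapping over the bad time window, and preserve the blow-up of the $L^2(\T^d)$-maximal to $H^s(\T^d)$ ratio inherited from $\R^d$. The Galilean step in $d=1$ is, by contrast, entirely elementary.
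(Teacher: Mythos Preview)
Your proposal is correct. For general $d$ you follow the same plan as the paper --- transfer the Euclidean counterexamples of \cite{Bourgain2016, MR3903115} to $\T^d$ by exploiting that the bad behavior occurs on a time window $[0,1/R]$ where periodic and non-periodic flows coincide --- and neither you nor the paper supplies full details of that rescaling step.

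Where you diverge is the explicit $d=1$ construction. You take a wave packet $\hat f_N=\mathbf 1_{[N,N+\sqrt N]\cap\Z}$, factor out the Galilean modulation $e^{i(Nx-N^2t)}$, and show that for each $x$ in an interval of fixed length there is a time $t\in[0,N^{-1}]$ with $|e^{it\Delta}f_N(x)|\gtrsim\sqrt N$; this is essentially the Dahlberg--Kenig example placed on $\T$, and is exactly what the paper means by ``adapting the non periodic counterexamples''. The paper instead gives a genuinely different $d=1$ construction: it takes a \emph{sparse} family of frequencies $f=\sum_{|k|\le N^\kappa}e^{iDkx}$ with $D\sim N^{1-\kappa}$, observes that $e^{it\Delta}f$ is large on the arithmetic product set $(D^{-1}\Z+B(0,1/(10N)))\times D^{-2}\Z$, and then applies a \emph{small} Galilean boost $\tilde f=e^{ix}f$ so that the image of this set under $(x,t)\mapsto x+2t$ equidistributes in $\T$. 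Your route is shorter and more elementary; the paper's route exploits a torus-specific recurrence mechanism (the time lattice $D^{-2}\Z$) and is closer in spirit to the Gauss-sum counterexample of \cite{MR2409184}, while still being distinct from it.
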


\begin{proof}
One can adapt the non periodic counterexamples. The maximal inequality \eqref{MaxInTorus} is disproved using a family
of initial data frequency supported on a ball of radius $R > 1$ (letting then $R \to \infty$). The time interval at which the $R$-th member $f_R$
of this family attains the 
$\sup_{t} |e^{it \Delta} f_R(x)|$ is contained in $[0,1/R]$, for all $R >1$. Since in this time/frequency region there is essentially no
difference between periodic and non periodic solutions, the procedure is straightforward.  
 If $d = 1$ one can prove this statement in different ways (see \cite{MR2409184}). Here we also give an alternative proof. 
Let $\kappa \in (0, \frac{1}{2})$, $N \gg 1$ and 
$D=\lfloor N^{1-\kappa} \rfloor$. We first focus on the 
family of initial data
\begin{equation}\label{UnmodInitialData}
f (x)= \sum_{\substack{ k \in \Z \\ |k| \leq  N/D}}  e^{i D k  x}
\end{equation}
and the corresponding solutions
\begin{equation}\label{OurSol}
e^{it\Delta} f (x) = \sum_{|k| \leq N/D} e^{i(  D k  x - D^2 |k|^2 t)}  \, .
\end{equation}
Notice that 
\begin{equation}\label{fegfkdhsd}
|e^{it\Delta} f (x)|  \sim \sum_{|k| \leq N/D} 1  \sim N/D \sim 
N^{\kappa }
\qquad 
\mbox{if}
\qquad
 (x, t) \in X \times T
\, ,
\end{equation}
where 
$$
X  := D^{-1} \mathbb{Z} + B \left(0, \frac{1}{10 N}\right) , 
\qquad
T := D^{-2} \mathbb{Z} \, .
$$
This is because we can write elements $t \in T$ as $\lfloor N^{1-\kappa} \rfloor^{-2} \tau$ with $\tau \in \mathbb{Z}$, so that
$$
D^2 |k|^2 t = D^2 |k|^2  D^{-2} \tau = |k|^2 \tau \in \mathbb{Z}
$$
and
we can write elements $x \in X$ as $x = D^{-1} \ell + \varepsilon$, with $\ell \in \mathbb{Z}$, $\varepsilon \in \R$ with $|\varepsilon| \leq \frac{1}{10N}$,  
so that 
$$
D k  x = D k   ( D^{-1} \ell + \varepsilon) 
=  k    \ell + D k   \varepsilon \in \mathbb{Z} + B \left(0,  D  |k|  |\varepsilon | \right)
$$
and
$$
D  |k|  |\varepsilon | \leq N^{1-\kappa} N^{\kappa} \frac{1}{10 N} \leq \frac{1}{10} \, .
$$
We consider instead the modulated initial data
$$
\tilde f (x) = e^{i x} f(x)  \, ,
$$
where $f$ is chosen as in \eqref{UnmodInitialData}. The corresponding solutions are
$$
e^{it\Delta} \tilde f (x) = e^{i(x   -  t )} (e^{it\Delta} f)(x - 2 t )   \, .
$$
Thus, recalling  
\eqref{fegfkdhsd}, we have
\begin{equation}\label{fegfkdhsdBis} 
\sup_{0 \leq t \leq 1/D} |e^{it\Delta} \tilde f (x) | \gtrsim 
N^{\kappa }
\qquad 
\mbox{ if }
x \in \bigcup_{t \in T \cap [0, 1/D]} X +  t \, ,
\end{equation}
Since the set $\bigcup_{t \in T \cap [0, 1/D]} X +  t$ is equidistributed in $\mathbb{T}$,
its measure is of order  
$1 \wedge ( D^2 N^{-1}) \sim 1 \wedge N^{1- 2 \kappa})$,
 the first factor being the cardinality of $T \cap [0, 1/D]$, the second the cardinality of the small balls of~$X \cap \mathbb{T}$, and the last the volumes of these
balls. Thus the set~$\bigcup_{t \in T \cap [0, 1/D]} X +  t$ has full measure for all large $N$ since we assume~$\kappa < 1/2$.
By~\eqref{fegfkdhsdBis} and noting that 
$$\| \tilde f \|_{L^2(\T)} = \|  f \|_{L^2(\T)} \sim (N/D)^{1/2} \sim N^{\frac{\kappa}{2}} \, ,$$ 
the 
maximal inequality~\eqref{MaxInTorus} implies that
$$
N^{\kappa} \lesssim N^s N^{\frac{\kappa}{2}} \, .
$$
Letting $N \to \infty$ this leads to a contradiction if $s < \frac{\kappa}{2}$. Since we have restricted to~$\kappa < \frac{1}{2}$
we have disproved the inequality \eqref{MaxInTorus} for all $s < \frac{1}{4}$.

\end{proof}

\subsection{The NLS Equation on $\T^d$ and $\R^d$ (Theorem \ref{MainTHM1})}\label{Sec:ProofOfTheorem1}

\

In this section we prove Theorem \ref{MainTHM1}. Thus we focus on the NLS equation \eqref{NLSeq} on $\Omega^d$, where $\Omega = \R$
or $\Omega=\T$. The nonlinearity is $\mathcal{N}(z) = \pm |z|^{p-1}z$ with $p \geq 3$. We focus on initial data in $H^{s}(\Omega^d)$ 
with 
\begin{equation}\label{SubriticalExp}
s > \max \left( 0, \frac{d}{2} - \frac{2}{p-1} \right) \, .
\end{equation}
For such data the flow is locally well defined; see \cite{MR1383498, tao2006nonlinear} for the non-periodic case and \cite{MR1209299} for the periodic one.  
Let $\Phi^N_t $ be the flow associated to the truncated NLS equation 
\begin{equation}\label{TruncatedNLS}
i \partial_t  \Phi^N_t f + \Delta \Phi^N_t f = \operatorname{P}_{\leq N}  \mathcal{N} (\Phi^N_t f  )\, ,
\end{equation}
with initial datum $\Phi^N_0 f  := \operatorname{P}_{\leq N} f$.
As usual $\operatorname{P}_{\leq N}$ denotes the frequency projection on the ball of radius $N$ centered in the origin.
We write $\Phi_t f := \Phi^\infty_t  f$ for the flow of the NLS equation with initial datum $f = \operatorname{P}_{\infty} f$.
We also denote $\operatorname{P}_{>N} := \operatorname{P}_{\infty} - \operatorname{P}_{\leq N}$ and as alreday mentioned $\operatorname{P}_{N} := \operatorname{P}_{\leq N} - \operatorname{P}_{\leq N/2}$. 

The following 
maximal estimate ensures a.e. pointwise convergence to the data. 
This is the nonlinear analog of the maximal estimate \eqref{OmegaMaxEst}.

\begin{proposition}\label{MaxEstObv}
Let $f \in L^2(\Omega^d)$ be such that 
\begin{equation}\label{MaxInNonlinearFull}
\lim_{N \to \infty} \| \sup_{0 \leq t \leq \delta} | \Phi_t f(x) - \Phi^N_t f(x)   | \|_{L^2_x(B_1)} = 0 
\end{equation}
for any $B_1 \subset \Omega^d$. Then
$\Phi_t f(x) \to f(x) $ as $t \to 0$ for almost every $x \in \Omega^d$. 
\end{proposition}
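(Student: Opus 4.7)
The plan is to run a standard triangle-inequality maximal argument: control the error between the true flow and the truncated flow by hypothesis, dispose of the error between $\Phi^N_t f$ and its initial datum by soft continuity (since $\Phi^N_t f$ lives in a Fourier-band-limited space and is therefore smooth and continuous in $t$), and dispose of the error between $\operatorname{P}_{\leq N} f$ and $f$ by $L^2$ convergence of partial Fourier sums.

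More precisely, fix a ball $B_1 \subset \Omega^d$ and $\lambda > 0$. I would decompose
\begin{equation} \nonumber
\Phi_t f(x) - f(x) = \bigl[ \Phi_t f(x) - \Phi_t^N f(x) \bigr] + \bigl[ \Phi_t^N f(x) - \operatorname{P}_{\leq N} f(x) \bigr] + \bigl[ \operatorname{P}_{\leq N} f(x) - f(x) \bigr],
\end{equation}
and bound the measure
\begin{equation} \nonumber
\Bigl| \bigl\{ x \in B_1 \, : \, \limsup_{t \to 0} |\Phi_t f(x) - f(x)| > \lambda \bigr\} \Bigr|
\end{equation}
by the sum of the measures of the three analogous sets associated to the three summands (with $\lambda/3$ in place of $\lambda$).

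For the first summand, Chebyshev's inequality together with the hypothesis \eqref{MaxInNonlinearFull} gives
\begin{equation} \nonumber
\Bigl| \bigl\{ x \in B_1 \, : \, \sup_{0 \leq t \leq \delta} |\Phi_t f(x) - \Phi_t^N f(x)| > \lambda/3 \bigr\} \Bigr| \lesssim \lambda^{-2} \bigl\| \sup_{0 \leq t \leq \delta} |\Phi_t f - \Phi_t^N f| \bigr\|_{L^2_x(B_1)}^2 \xrightarrow[N \to \infty]{} 0.
\end{equation}
For the second summand, I would use that $\Phi_t^N f$ has Fourier support in $\{|\xi| \leq N\}$ for all $t$ (because both $\Delta$ and the truncated nonlinearity $\operatorname{P}_{\leq N} \mathcal{N}$ preserve this band limitation, and the initial datum does), so $\Phi_t^N f$ is smooth in $x$ and continuous in $t$ with values in any $H^s$. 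Thus $\Phi_t^N f(x) \to \operatorname{P}_{\leq N} f(x)$ as $t \to 0$ uniformly in $x \in B_1$, and the corresponding exceptional set is empty for every $\lambda > 0$ and every $N$. For the third summand, Chebyshev again gives
\begin{equation} \nonumber
\Bigl| \bigl\{ x \in B_1 \, : \, |\operatorname{P}_{\leq N} f(x) - f(x)| > \lambda/3 \bigr\} \Bigr| \lesssim \lambda^{-2} \| \operatorname{P}_{\leq N} f - f \|_{L^2(\Omega^d)}^2 \xrightarrow[N \to \infty]{} 0.
\end{equation}

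Combining these bounds and letting $N \to \infty$ shows that the $\limsup$-exceptional set has measure zero for every $\lambda > 0$, hence $\limsup_{t \to 0} |\Phi_t f(x) - f(x)| = 0$ for a.e. $x \in B_1$. Since $\Omega^d$ is covered by countably many such balls, the conclusion follows. The only delicate point is ensuring that the truncated flow $\Phi_t^N f$ really is continuous in $t$ down to $t = 0$ pointwise in $x$; but this is essentially immediate from the observation that $\Phi_t^N f$ solves an ODE (in the periodic case literally finite-dimensional, in the Euclidean case on the space of functions band-limited to $|\xi|\leq N$) with smooth right-hand side, and admits smooth representatives in $x$ by the Fourier support constraint.
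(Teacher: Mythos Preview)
Your proof is correct and follows essentially the same approach as the paper: the three-term decomposition $\Phi_t f - f = (\Phi_t f - \Phi_t^N f) + (\Phi_t^N f - \operatorname{P}_{\leq N} f) + (\operatorname{P}_{\leq N} f - f)$, Chebyshev on the first and third terms, and soft continuity of the truncated flow to kill the middle term. The only cosmetic difference is that the paper first passes to the $\limsup_{t\to 0}$ (which eliminates the middle term outright) and then applies Chebyshev with $\lambda/2$, whereas you apply Chebyshev to all three pieces with $\lambda/3$; this is immaterial.
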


From the proof it will be clear that in \eqref{MaxInNonlinearFull} we can replace the $L^2$ norm with the (smaller) $L^1$ norm. 
However is usually convenient to work in $L^2$ setting.

\begin{proof}
To prove Proposition \ref{MaxEstObv} we
decompose the difference as follows:

\begin{align}\label{DecompLikeThis}
| \Phi_t f(x) - f(x) | &
 \leq 
| \Phi_t f(x) - \Phi^N_t f(x) |
+
| \Phi^N_t f(x) - \operatorname{P}_{\leq N} f(x) |
+  
| \operatorname{P}_{> N} f(x) |
\end{align}
and pass to the limit $t \to 0$. The second term on the right hand side is zero. In fact, since 
$\operatorname{P}_{\leq N} f$ is smooth once has immediately that
$$
\lim_{t \to 0}  \Phi^N_t f(x) =  \operatorname{P}_{\leq N} f(x) \, ,
$$  
for all $x \in \Omega^d$.
So we arrive at\footnote{Hereafter we remove the $x$ variable in the argument of decompositions like \eqref{DecompLikeThis} to simplify the notation.}
$$
\limsup_{t \to 0} |\Phi_t f - f | \leq 
\limsup_{t \to 0} | \Phi_t f - \Phi^N_t f |
+  
| \operatorname{P}_{> N} f | \, .
$$
Let $\lambda >0$. Using the Chebyshev inequality 
\begin{align}\nonumber
| \{ x \in B_1 : &\limsup_{t \to 0} | \Phi_t f - f | > \lambda \} |
 \leq | \{ x \in B_1 : \sup_{0 \leq t \leq \delta} | \Phi_t f - \Phi^N_t f | > \lambda/2 \} |
\\ \nonumber
& \hspace{2.5in}
+
| \{ x \in B_1 : | \operatorname{P}_{> N} f  | > \lambda/2 \} |
\\ \nonumber
&
\hspace{1in} \lesssim
  \lambda^{-2} \left( \left\| \sup_{0 \leq t \leq \delta} | \Phi_t f - \Phi^N_t f | \right\|^2_{L^2(B_1)} 
+
  \| \operatorname{P}_{> N} f  \|^2_{L^2(B_1)} \right) \, ,
\end{align}
where $|\cdot|$ is the Lebesgue measure.
On the other hand we have 
 $ \|\operatorname{P}_{> N} f    \|_{L^2(\Omega^d)}  
 \to 0 $ as $N \to \infty$ (since $f \in L^{2}(\Omega^d)$)
and
$$
\lim_{N \to \infty} \left\| \sup_{0 \leq t \leq \delta} | \Phi_t f - \Phi^N_t f | \right\|_{L^2(B_1)} = 0 
$$
by assumption \eqref{MaxInNonlinearFull}. Thus we arrive to
$$
| \{ x \in B_1 : \limsup_{t \to 0} | \Phi_t f - f | > \lambda \} | 
= 0 
$$ 
and the statement follows taking the union over $\lambda>0$ and covering $\Omega^d$ with a countable collection of balls $B_1$.

\end{proof}

We combine the following lemma with the embedding contained in Lemma \ref{MainLemma} to verify the maximal 
estimate hypothesis of Proposition \ref{MaxEstObv} in concrete situations.  

\begin{lemma}\label{LocalLemma}
Let $p \geq 3$ and  $s > \max ( 0, \frac{d}{2} - \frac{2}{p-1} )$. Then 
\begin{equation}\label{MainTechEst1}
\|   \mathcal{N}  (u) -  \mathcal{N} (v)    \|_{X^{s, - \frac12 + + }}
\lesssim \left( \| u \|^{p-1}_{X^{s, \frac12 + }} + \| v \|^{p-1}_{X^{s, \frac12 + }} \right) \| u - v \|_{X^{s, \frac12 + }}
\end{equation}
\end{lemma}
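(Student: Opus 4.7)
The plan is to proceed by duality, combining the dual Strichartz embedding \eqref{Dual1} with the forward embedding \eqref{ITFCOS2Summed} via a fractional chain rule applied to $\mathcal{N}(z) = \pm|z|^{p-1}z$.

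First, by the fundamental theorem of calculus
\[
\mathcal{N}(u) - \mathcal{N}(v) = (u-v)\int_0^1 \partial_z\mathcal{N}(w_\theta)\,d\theta + \overline{(u-v)}\int_0^1 \partial_{\bar z}\mathcal{N}(w_\theta)\,d\theta,
\]
with $w_\theta := v + \theta(u-v)$, so that pointwise $|\mathcal{N}(u) - \mathcal{N}(v)| \lesssim (|u|^{p-1}+|v|^{p-1})|u-v|$, with analogous pointwise control for first $x$-derivatives. After a Littlewood--Paley decomposition, the dual embedding \eqref{Dual1} together with dyadic summation (absorbing the $N^{0+}$ losses into the strict subcritical gap $s > s_c$) reduces the problem to bounding
\[
\bigl\|\langle D\rangle^s(\mathcal{N}(u) - \mathcal{N}(v))\bigr\|_{L^{q'+}_{x,t}(\Omega^{d+1})}, \qquad q' := \tfrac{2(d+2)}{d+4}.
\]
A Kato--Moser fractional chain rule (applicable since $\mathcal{N} \in C^1$ with $|\nabla\mathcal{N}(z)| \lesssim |z|^{p-1}$ and $s \geq 0$) distributes the derivative and yields a sum of terms schematically of the form $(|u|^{p-1}+|v|^{p-1})|\langle D\rangle^s(u-v)|$ and $(|\langle D\rangle^s u|+|\langle D\rangle^s v|)(|u|^{p-2}+|v|^{p-2})|u-v|$.

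Next I would apply H\"older's inequality with the exponent split $\tfrac{p-1}{r} + \tfrac{1}{\beta} = \tfrac{1}{q'}$, taking $\beta := \tfrac{2(d+2)}{d}-$ and $r := \tfrac{(p-1)(d+2)}{2}+$. The forward Strichartz embedding \eqref{ITFCOS2Summed} then yields
\[
\|\langle D\rangle^s u\|_{L^\beta_{x,t}} \lesssim \|u\|_{X^{s,1/2+}}, \qquad \|u\|_{L^r_{x,t}} \lesssim \|u\|_{X^{s,1/2+}},
\]
the latter being valid precisely when $r \leq \tfrac{2(d+2)}{d-2s}$, which is equivalent to $(p-1)(d-2s) \leq 4$, i.e.\ to $s \geq \tfrac{d}{2} - \tfrac{2}{p-1}$. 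This matches the hypothesis \eqref{SubriticalExp}, and collecting the contributions (with an extra $\|u-v\|_{X^{s,1/2+}}$ factor from the difference term in each case) produces exactly the bound \eqref{MainTechEst1}.

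The main obstacle I anticipate is the validity of the fractional chain rule for $\mathcal{N}(z)=|z|^{p-1}z$ when $p$ is not an odd integer, since then $\mathcal{N}$ is merely $C^1$ (not smooth) at the origin. For odd integer $p$ the nonlinearity is polynomial in $u,\bar u$ and the analysis reduces to iterated fractional Leibniz rules together with the embedding \eqref{ITFCOS2Summed}. For general $p \geq 3$ one must invoke the generalized Moser / Kato estimates (see e.g.~\cite{MR1383498, tao2006nonlinear}), whose applicability is ensured precisely by $s > 0$. Beyond this, the remainder is careful bookkeeping of H\"older exponents together with the dyadic summation, all routine given the strict subcriticality $s > s_c$ in \eqref{SubriticalExp}.
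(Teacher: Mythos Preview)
Your strategy—reduce to a space-time Lebesgue norm via the dual embedding \eqref{Dual1}, then apply fractional Leibniz/chain rules and close with \eqref{ITFCOS2Summed}—is the standard well-posedness route on $\R^d$ and is different from what the paper does. The paper instead carries out an explicit Bony paraproduct decomposition of $(u-v)\zeta_1(u,v)$ according to whether $\zeta_1$ has frequency $\gtrsim N$ or $\ll N$ (with $N$ the output frequency); in the high--low piece it further localizes $P_{\sim N}(u-v)$ to cubes $Q_{N,N_1}$ of side $N_1$ equal to the frequency of $\zeta_1$, using the Galilean invariance of the Strichartz estimate \eqref{ITFCOS2} on cubes together with almost-orthogonality; see \eqref{WSTCubeGain}--\eqref{CubeLocGain2}.

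The reason for that extra machinery is a point you have glossed over on $\T^d$. The endpoint Strichartz loss means both \eqref{Dual1} and \eqref{ITFCOS2} carry unavoidable $N^{0+}$ factors, and in the Kato--Ponce term where the full $\langle D\rangle^s$ lands on $u-v$ (the high--low paraproduct piece) this yields $\|u-v\|_{X^{s+,1/2+}}$ rather than $\|u-v\|_{X^{s,1/2+}}$. The subcritical gap $s>s_c$ only provides slack on the $p-1$ undifferentiated factors, not on the one carrying $\langle D\rangle^s$, so your ``absorbing the $N^{0+}$ losses'' claim does not work as written. The paper's cube decomposition is precisely the device that converts the $N^{0+}$ loss into an $N_1^{0+}$ loss (cube side), which is then beaten by the $N_1^{0-}$ gain from the frequency-localized chain-rule estimate \eqref{Plug1} on $P_{N_1}\zeta_1$. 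Your argument could be repaired on $\T^d$ by invoking a splitting-type Leibniz rule that places $\langle D\rangle^{s-\epsilon}$ on $u-v$ and $\langle D\rangle^{\epsilon}$ on $\zeta_1$, but this is not the plain Kato--Moser estimate you cite and would need to be justified separately. On $\R^d$ there is no endpoint loss and your outline is correct.
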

We postpone the proof of Lemma \ref{LocalLemma} to the end of the section. 

We denote $R_0 = \| f \|_{H^s(\Omega^d)}$. Recall that $\eta$ is a smooth cut--off of $[0,1]$. Taking $\delta = \delta(R_0) <1$ sufficiently small
and combining \eqref{Basic1}, \eqref{Basic3}, \eqref{Basic2} and Lemma \ref{LocalLemma} 
one can show that the map
\begin{equation}\label{Contraction1}
\Gamma (u(x,t)) = \eta(t) e^{it \Delta} \operatorname{P}_{\leq N} f(x) - i \eta(t) \int_{0}^{t} e^{i(t-t')\Delta} \operatorname{P}_{\leq N} \mathcal N (u(x,t')) dt'
\end{equation}
is a contraction on the ball $\{ u \; : \; \| u \|_{X^{s, \frac12+}_{\delta}}\leq 2R_0\}$, for all~$N \in 2^{\mathbb{N}} \cup \{ \infty \}$. 
This is a standard argument, so we omit the proof (see for instance \cite[Section 3.5.1]{ETBook}). Moreover, a similar computation is part of the proof of Theorem \ref{MainTHM1}.   
However, we stress that the value of $\delta$ is uniform
in~$N \in 2^{\mathbb{N}} \cup \{ \infty \}$. In particular we have 
\begin{equation}\label{NUniformBound}
\| \Phi^N_t f \|_{X^{s, \frac12+}_{\delta}} \leq 2 R_0, 
\qquad \mbox{for all } \, N \in 2^{\mathbb{N}} \cup \{ \infty \} \, .
\end{equation}
We are now ready to prove Theorem \ref{MainTHM1}.
\subsubsection*{Proof of Theorem \ref{MainTHM1}}   
We first prove the a.e. convergence statement \eqref{AlmostEverywhereConv}.
By Lemma \ref{MainLemma} we have 
  \begin{equation}\nonumber
 \left\| \sup_{0 \leq t \leq \delta} | \Phi_t f(x) - \Phi^N_t f(x)   | \right\|_{L^2_x(B_1)}  \lesssim 
 \|  \Phi_t f - \Phi^N_t f    \|_{X^{s,\frac12+}_{\delta}} \, .
 \end{equation} 
Thus using Proposition \ref{MaxEstObv} it suffices to show that the right hand side goes to zero as $N \to \infty$. 
For $t \in [0, \delta]$ we have (see \eqref{Contraction1})
\begin{align}\nonumber
& \Phi_t f(x) - \Phi^N_t f(x) 
\\ \nonumber
& = \eta(t) e^{it \Delta} \operatorname{P}_{>N} f(x) - i \eta(t) \int_{0}^{t} e^{i(t-t')\Delta} \left( \mathcal N (\Phi_{t'} f(x)) - \operatorname{P}_{\leq N} \mathcal N (\Phi^N_{t'} f(x)) \right) dt'.
\end{align}
Then using \eqref{Basic1} and  \eqref{Basic3} we have 
\begin{align}
 \|  \Phi_t f - \Phi^N_t f    \|_{X^{s,\frac12+}_{\delta}}
\lesssim
\|  \operatorname{P}_{>N} f   \|_{H^{s}(\Omega^d)}
 + 
\|  \mathcal N (\Phi_t f) - \operatorname{P}_{\leq N} \mathcal N (\Phi^N_t f)     \|_{X^{s, -\frac12+}_{\delta}} \, .
\end{align}
To handle the nonlinear contribution we further decompose
$$
\mathcal N (\Phi_t f) - \operatorname{P}_{\leq N} \mathcal N (\Phi^N_t f )
= \operatorname{P}_{\leq N}
 \left( \mathcal N (\Phi_t f) -  \mathcal N (\Phi^N_t f ) \right)
 +
\operatorname{P}_{>N} \mathcal N (\Phi_t f) \, 
$$
so that 
\begin{align}\label{Plug1B}
 \|  \Phi_t f - \Phi^N_t f    \|_{X^{s,\frac12+}_{\delta}}
 & \lesssim
\|  \operatorname{P}_{>N} f   \|_{H^{s}(\Omega^d)}
 +
 \| \operatorname{P}_{>N}  \mathcal N (\Phi_t f)      \|_{X^{s, -\frac12+}_{\delta}} 
\\ \nonumber
& \qquad + 
 \| \operatorname{P}_{\leq N} \left(  \mathcal N (\Phi_t f) - \mathcal N (\Phi^N_t f)  \right)   \|_{X^{s, -\frac12+}_{\delta}} \, .
\end{align}
Then by \eqref{Basic2},
Lemma \ref{LocalLemma},
and \eqref{NUniformBound}, we get
\begin{align}\label{Plug1A}
  &  \|  \operatorname{P}_{\leq N} \left( \mathcal N (\Phi_t f) -  \mathcal N (\Phi^N_t f ) \right)  \|_{X^{s, -\frac12+}_{\delta}} 
  \lesssim \delta^{0+} R_{0}^{p-1}
  \|  \Phi_{t} f -  \Phi^N_{t} f    \|_{X^{s,  \frac12+}_{\delta}} \, ,
\end{align}
where we recall $R_{0} = \| f \|_{H^s(\Omega^d)}$.
Plugging \eqref{Plug1A} into \eqref{Plug1B}, taking $\delta=\delta(R_0)$ small enough and absorbing 
$$
 \delta^{0+} R_{0}^{p-1}
  \|  \Phi_{t} f -  \Phi^N_{t} f    \|_{X^{s,  \frac12+}_{\delta}}  \leq \frac12  
  \|  \Phi_{t} f -  \Phi^N_{t} f    \|_{X^{s,  \frac12+}_{\delta}}
  $$ 
into the left hand side, we arrive to 
\begin{align}\label{TRHSOTA}
 \|  \Phi_t f - \Phi^N_t f    \|_{X^{s,\frac12+}_{\delta}}
&  \lesssim
\|  \operatorname{P}_{>N} f   \|_{H^{s}(\Omega^d)}
 +
 \| \operatorname{P}_{>N}  \mathcal N (\Phi_{t} f)      \|_{X^{s, -\frac12+}_{\delta}} 
\end{align}
The  right hand side of \eqref{TRHSOTA} goes to zero as $N \to \infty$
since $f \in H^{s}(\Omega^d)$ and 
$\mathcal N (\Phi_{t} f) \in X^{s, -\frac12+}_{\delta}$; in fact applying Lemma \ref{LocalLemma} with $v=0$ 
and recalling~\eqref{NUniformBound} we have 
\begin{equation}\nonumber
\|  \mathcal N (\Phi_{t} f)      \|_{X^{s, -\frac12+}_{\delta}}  \lesssim
\| \Phi_{t} f      \|^p_{X^{s, \frac12+}_{\delta}} \lesssim 
 R_0^p \, .
\end{equation}
This concludes the proof of \eqref{AlmostEverywhereConv}.

To prove \eqref{EverywhereConv} it is enough to show that if $d=1,2$ and $ s>d/6$ then 
\begin{equation}\label{ProofOfEverywhereConv}
\left\|\int^t_0 e^{i(t-t')\Delta}|\Phi_{t'}f |^2 \Phi_{t'}f dt'\right\|_{X^{\frac{d}{2}+,\frac12+}_{\delta}} \lesssim \| \Phi_{t}f \|_{X^{s,\frac12++}_{\delta}}^3
\lesssim R_0^3 \, .
\end{equation}
Indeed then we would have  
$\Phi_{t}f - e^{it\Delta} f \in X^{\frac{d}{2}+,\frac12+}_{\delta}$ and we can use 
$X^{\frac{d}{2}+,\frac12+}_{\delta} \hookrightarrow C_t ([0,\delta] ;  H^{\frac{d}2+}(\Omega^d))$ and $H^{\frac{d}{2}+}(\Omega^d) \hookrightarrow C_x(\Omega^d)$ 
(Sobolev embedding) 
to get \eqref{EverywhereConv}.
On the other hand \eqref{ProofOfEverywhereConv} follows by \eqref{Basic3}, Corollary \ref{uuu} and \eqref{NUniformBound}, so we are done.

\hfill $\Box$

We conclude this  section with the proof of  Lemma \ref{LocalLemma} and the statement of a similar one -- an analog for functions with frequencies restricted to dyadic annuli.
These kind of results are now very well understood; however we report the proof for the sake of completeness.
\subsubsection*{Proof of Lemma \ref{LocalLemma}}
We consider the case $\Omega = \T$. The proof in the case $\Omega = \R$ requires some modification. It is in fact easier, since there is no loss 
in the endpoint case $p=2\left(\frac{d+2}{s}\right)$ of the Strichartz estimates \eqref{FullStrich}. 
We abbreviate everywhere in the proof 
$$
L^{q}_{x,t}(\T^{d+1})
\quad \mbox{to} \quad 
L^{q}_{x,t} \, .
$$
Recalling $\mathcal N (z) := |z|^{p-1} z$ and using the Fundamental Theorem of Calculus we can represent 
\begin{align}
& \mathcal N  ( u ) -  \mathcal{N} (v) 
= \int_0^1 \frac{d}{d \rho} ( \mathcal N (v + \rho (u-v) ) ) d \rho
\\ \nonumber
& = (u-v) \int_0^1 (\partial_{z} \mathcal N)(v + \rho (u-v) ) d\rho + (\overline{u-v} ) \int_0^1 (\partial_{\bar z} \mathcal N)(v + \rho  (u-v) ) d \rho
\\ \nonumber
&
=: (u-v) \zeta_1(u,v) + (\overline{u-v} ) \zeta_2(u,v) \, .
\end{align}
Notice that $\partial_z \mathcal N$ and $\partial_{\bar z} \mathcal N$ are continuous functions since $p \geq 3$.
For simplicity we only show that the 
$X^{s, - \frac12 ++}$ norm of $(u-v) \zeta_1(u,v)$ is bounded by the right hand side of \eqref{MainTechEst1}. The proof that 
the same holds for $(\overline{u-v} ) \zeta_2(u,v)$ is identical. 
Let decompose dyadically      
\begin{equation}
\|  (u-v) \zeta_1(u,v) \|_{X^{s, - \frac12++ }}^2 = \sum_{N} N^{2s} \| \operatorname{P}_{N} ( (u-v) \zeta_1(u,v) ) \|_{X^{0, - \frac12++ }}^2  
\end{equation}
and estimate
\begin{align}\label{Bony}
 N^s \| \operatorname{P}_{N} ( (u-v) \zeta_1(u,v) ) \|_{X^{0, - \frac12++ }}
& \lesssim  N^s \| \operatorname{P}_{N} ( (u-v)  \operatorname{P}_{\ll N } \zeta_1(u,v)  ) \|_{X^{0, - \frac12++ }}
\\ \nonumber
& + 
\sum_{N_1 \gtrsim N} N^{s} \| \operatorname{P}_{N} ( (u-v) \operatorname{P}_{N_1} \zeta_1(u,v) )  \|_{X^{0, - \frac12++ }} \, .
\end{align}
We first focus on the second term  on the right hand side of \eqref{Bony}. This one is the easiest to bound, since the restriction on frequencies 
 $N_1 \gtrsim N$ gives a gain once we estimate 
the norm of $\operatorname{P}_{N_1}  \zeta_1(u,v) $.
Using \eqref{Dual1}, H\"older's inequality, and \eqref{ITFCOS2} we get 
\begin{align}\label{Plug0} 
& \sum_{N_1 \gtrsim N} N^{s} \| \operatorname{P}_{N} ( (u-v) \operatorname{P}_{N_1} \zeta_1(u,v) )  \|_{X^{0, - \frac12++ }} 
\\ \nonumber
& 
\lesssim \sum_{N_1 \gtrsim N} N^{s +}  \| (u-v) \operatorname{P}_{N_1} \zeta_1(u,v)   \|_{L^{2 \left(\frac{d+2}{d+4}\right)+}_{x, t}}
\\ \nonumber
&
\lesssim 
\sum_{N_1 \gtrsim N} N^{s +}  \| u-v   \|_{L^{2 \left(\frac{d+2}{d}\right)}_{x, t}}
\| \operatorname{P}_{N_1}  \zeta_1(u,v)   \|_{L^{ \frac{d+2}{2}+}_{x, t}}
\\ \nonumber
&
\lesssim 
\sum_{N_1 \gtrsim N} N^{0+}  \| u-v   \|_{ X^{s,  \frac12+  }}
\| \operatorname{P}_{N_1}  \zeta_1(u,v)   \|_{L^{ \frac{d+2}{2}+}_{x, t}}.
\end{align}

Recalling the 
definition of $\zeta_1(u,v)$ and using Minkowski's inequality and $L^p$ estimates for nonlinear operators of power type (see for instance \cite[Proposition 2.3]{MR3947642})
we have  
\begin{align}\label{TBecomes}
& \| \operatorname{P}_{N_1}  \zeta_1(u,v)   \|_{L^{ \frac{d+2}{2}+}_{x, t}} 
 \lesssim 
N_1^{0+} \left( \| u \|^{p-2}_{L^{2\left(\frac{d+2}{d-2s}\right)-}_{x,t}} + \| v \|^{p-2}_{L^{2\left(\frac{d+2}{d-2s}\right)-}_{x,t}} \right) 
\\ \nonumber
& \times
\sum_{N_2} 
\min \left( 1, \frac{N_2}{N_1}\right)
\left( \| \operatorname{P}_{N_2} u \|_{L^{ \frac{2(d+2)}{4 - (p-2)(d-2s)}+}_{x, t}} 
+ \| \operatorname{P}_{N_2} v\|_{L^{ \frac{2(d+2)}{4 - (p-2)(d-2s)}+}_{x, t}} \right) .
\end{align}
Notice that \eqref{ITFCOS2} gives
\begin{equation}\label{2Ways}
\| \operatorname{P}_{N_2} F(x,t) \|_{L^{  \frac{2(d+2)}{4 - (p-2)(d-2s)} +}_{x, t}} 
\lesssim      N_2^{ 0 --- } \| \operatorname{P}_{N_2} F(x, t) \|_{X^{s, \frac12+}} 
 \, .
\end{equation}
Indeed, if $ \frac{2(d+2)}{4 - (p-2)(d-2s)} \geq \frac{2(d+2)}{d}$ we can use \eqref{ITFCOS2} and we get a factor
$N_2^{ \frac{(p-1)(d-2s) -4}{2}+}$. Since $\frac{(p-1)(d-2s) -4}{2}+ < 0$ for $s >\frac{d}{2} - \frac{2}{p-1}$ we get \eqref{2Ways}.
If $ \frac{2(d+2)}{4 - (p-2)(d-2s)} \leq \frac{2(d+2)}{d}$ we can bound the 
$L^{ \frac{2(d+2)}{4 - (p-2)(d-2s)}}_{x,t}$ norm with the  $L^{\frac{2(d+2)}{d}}_{x,t}$ norm and use \eqref{ITFCOS2}, to get a factor
$N_2^{- s + }$. Again, since $s > 0 $ we get \eqref{2Ways}.
Using \eqref{2Ways} and 
\eqref{ITFCOS2Summed} the estimate \eqref{TBecomes} becomes
\begin{align}\label{Plug1}
 \| \operatorname{P}_{N_1}  \zeta_1(u,v) &  \|_{L^{ \frac{d+2}{2}+}_{x, t}} 
 \lesssim 
N_1^{0+} \left( \| u \|^{p-2}_{ X^{s, \frac12+} } + \| v \|^{p-2}_{ X^{s, \frac12+ } } \right) 
\\ \nonumber
& \times
\sum_{N_2} 
\min \left( 1, \frac{N_2}{N_1}\right) N_2^{ 0 -- } 
\left( \| \operatorname{P}_{N_2} u \|_{X^{s, \frac12+}} + \| \operatorname{P}_{N_2} v \|_{X^{s, \frac12+}} \right) 
\\ \nonumber 
& 
\lesssim
N_1^{0--}  
\left( \|  u \|^{p-1}_{X^{s, \frac12+}} + \|  v \|^{p-1}_{X^{s, \frac12+}} \right) 
\end{align}
Plugging \eqref{Plug1} into \eqref{Plug0} 
we obtain (recall $N_1 \gtrsim N$)
\begin{align}\nonumber
& \sum_{N_1 \gtrsim N} N^{s} \| \operatorname{P}_{N} ( (u-v) \operatorname{P}_{N_1} \zeta_1(u,v) )  \|_{X^{0, - \frac12++ }} 
\\ \nonumber
&
\lesssim 
\sum_{N_1 \gtrsim N}  N_1^{0-}    \| u-v   \|_{ X^{s,  \frac12+  }}
\left( \|  u \|^{p-1}_{X^{s, \frac12+}} + \|  v \|^{p-1}_{X^{s, \frac12+}} \right) 
\\ \nonumber
&
\lesssim
N^{0-}  \| u-v   \|_{ X^{s,  \frac12+  }}
\left( \|  u \|^{p-1}_{X^{s, \frac12+}} + \|  v \|^{p-1}_{X^{s, \frac12+}} \right) \, .
\end{align}
Summing the square of this inequality over $N$, we have handled the contribution of the second term on the 
right hand side of~\eqref{Bony}. To handle the first term  we note that
\begin{align}\label{TLPIDet}
N^s \| \operatorname{P}_{N} ( (u-v)  \operatorname{P}_{\ll N } \zeta_1(u,v)  ) \|_{X^{0, - \frac12++ }}
\lesssim
N^s \|  (\operatorname{P}_{\sim N} (u-v) )  \operatorname{P}_{\ll N } \zeta_1(u,v)   \|_{X^{0, - \frac12++ }} 
\end{align}
and decompose   
\begin{align}\label{WSTCubeGain}
(\operatorname{P}_{\sim N} (u-v) )  \operatorname{P}_{\ll N } \zeta_1(u,v) 
& =
\sum_{N_1 \ll N} \sum_{Q_{N, N_1}} ( \operatorname{P}_{Q_{N, N_1}} (u-v)  ) \operatorname{P}_{N_1} \zeta_1(u,v) 
\\ &
=
\sum_{N_1 \ll N} \sum_{Q_{N, N_1}} \operatorname{P}_{100 Q_{N, N_1}} \left( ( \operatorname{P}_{Q_{N, N_1}} (u-v)  ) \operatorname{P}_{N_1} \zeta_1(u,v) \right)
\, ,
\end{align}
where $Q_{N, N_1}$ is a partition of the annulus 
of size $N$ into cubes of side $N_1$ (this is possible since $N_1 < N$).  In the second identity we used that 
the support of $( \operatorname{P}_{Q_{N, N_1}} F  ) \operatorname{P}_{N_1} G$ is contained in 
$100 Q_{N, N_1}$. Since for different $N, N_1$ the projections $\operatorname{P}_{Q_{N, N_1}}$ are (almost) orthogonal, 
squaring  \eqref{WSTCubeGain}
we get
 \begin{align}\label{CubeLocGainPreq}
 N^{2s} \| &\operatorname{P}_{N} ( (u-v)  \operatorname{P}_{\ll N } \zeta_1(u,v)  ) \|_{X^{0, - \frac12++ }}^2
\\ \nonumber
& 
= 
 N^{2s} \sum_{N_1 \ll N} 
 \sum_{Q_{N, N_1}} \| \operatorname{P}_{100 Q_{N, N_1}} \left( ( \operatorname{P}_{Q_{N, N_1}} (u-v)  ) \operatorname{P}_{N_1} \zeta_1(u,v) \right)
\|_{X^{0, - \frac12++ }}^2 \, .
 \end{align}
Proceeding exactly as before we get 
\begin{align}\label{CubeLocGain}
  \| \operatorname{P}_{100 Q_{N, N_1}}  & \left( ( \operatorname{P}_{Q_{N, N_1}} (u-v)  ) \operatorname{P}_{N_1} \zeta_1(u,v) \right)
   \|_{X^{0, - \frac12++ }} 
\\ \nonumber 
& \lesssim
N_1^{0-}  \| \operatorname{P}_{Q_{N, N_1}} ( u-v )   \|_{ X^{0,  \frac12+  }}
\left( \|  u \|^{p-1}_{X^{s, \frac12+}} + \|  v \|^{p-1}_{X^{s, \frac12+}} \right) \, ;
\end{align}
notice that since the side of $Q_{N, N_1}$ is $N_1$ we had only powers of $N_1$ in this computation.
Thus 
\begin{align}\label{CubeLocGain2}
 N^{2s} \| \operatorname{P}_{N} & ( (u-v)  \operatorname{P}_{\ll N } \zeta_1(u,v)  ) \|_{X^{0, - \frac12++ }}^2
\lesssim 
\\ \nonumber
&
\left( \|  u \|^{p-1}_{X^{s, \frac12+}} + \|  v \|^{p-1}_{X^{s, \frac12+}} \right)^2 
N^{2s} \sum_{N_1 \ll N} 
 \sum_{Q_{N, N_1}} 
 N_1^{0-}  \| \operatorname{P}_{Q_{N, N_1}} ( u-v )   \|^2_{ X^{0,  \frac12+  }}
 \, .
 \end{align}
Summing the square of \eqref{CubeLocGain2} over $Q_{N, N_1}$ (recall that these cubes are a partition of the annulus of size $N$) 
and later over $N_1$, we obtain (after taking the square root) 
$$
N^{s} \| \operatorname{P}_{N} ( (u-v)  \operatorname{P}_{\ll N } \zeta_1(u,v)  ) \|_{X^{0, - \frac12++ }}
\lesssim
\left( \|  u \|^{p-1}_{X^{s, \frac12+}} + \|  v \|^{p-1}_{X^{s, \frac12+}} \right)
\| \operatorname{P}_{N} ( u-v )   \|_{ X^{s,  \frac12+  }}
  \, ,
$$
which gives the correct control also on the first term  on the right hand side of~\eqref{Bony}. This concludes the proof.

\hfill $\Box$

Later we will also need the following Lemma, whose proof is a straightforward adaptation of the previous argument.
\begin{lemma}
Let $d=2$ and $s >0$.   
Let $M_1 \geq M_2 \geq M_3$
 be dyadic scales. Then
 \begin{multline}\label{TrilinearStrich}
 \| (\operatorname{P}_{M_1}F) (\operatorname{P}_{M_2} G) (\operatorname{P}_{M_3} H)   \|_{X^{s, -\frac12++}}  \\ \lesssim 
  \| \operatorname{P}_{M_1} F \|_{X^{s, \frac12+}}  \| \operatorname{P}_{M_2} G  \|_{X^{0+, \frac12+}} \| \operatorname{P}_{M_3} H  \|_{X^{0, \frac12+}}.
\end{multline}
\end{lemma}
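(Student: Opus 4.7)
My plan is to follow the argument of Lemma \ref{LocalLemma}, specialized to $d=2$ and to a trilinear expression whose three factors are already frequency-localized at scales $M_1 \geq M_2 \geq M_3$. I would dyadically decompose the output,
\begin{equation*}
\|(\operatorname{P}_{M_1}F)(\operatorname{P}_{M_2}G)(\operatorname{P}_{M_3}H)\|_{X^{s,-\frac12++}}^2 = \sum_N N^{2s}\|\operatorname{P}_N[(\operatorname{P}_{M_1}F)(\operatorname{P}_{M_2}G)(\operatorname{P}_{M_3}H)]\|_{X^{0,-\frac12++}}^2,
\end{equation*}
and observe that Fourier-support considerations force $N \lesssim M_1$, with $N \sim M_1$ whenever $M_1 \gg M_2$. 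I would then split the analysis into the two regimes $N \sim M_1$ and $N \ll M_1$, exactly as with the two summands of \eqref{Bony}.

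For the \emph{paraproduct regime} $N \sim M_1$, I would apply \eqref{Dual1} to pass from $X^{0,-\frac12++}$ to $L^{4/3+}_{x,t}$, then H\"older with three factors at $L^{p_i}_{x,t}$ exponents slightly above $4$ (with $\sum p_i^{-1}=3/4-$), and finally \eqref{ITFCOS2} on each $\operatorname{P}_{M_i}$ factor with Sobolev weights $s$, $0+$, and $0$, respectively. Using $N^s \leq M_1^s$ and the ordering $M_3 \leq M_2 \leq M_1$, the bookkeeping mirrors \eqref{Plug0}--\eqref{Plug1}: the positive power $M_1^s$ coming from the weight on $F$ cancels the corresponding Strichartz loss on $\operatorname{P}_{M_1}F$, the $0+$ weight on $G$ supplies a small gain $M_2^{-0+}$ which absorbs the residual $+$-losses from the $\T^2$-endpoint Strichartz on the other factors, and everything sums in $N$ thanks to the strict positivity of $s$ and the flexibility in the $0+$.

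For the \emph{high-high-to-low regime} $N \ll M_1$, Fourier support forces $M_1 \sim M_2$. Setting $R:=\max(N,M_3)$, I would partition the annulus $\{|\xi|\sim M_1\}$ into cubes $\{Q\}$ of side $R$, write $\operatorname{P}_{M_1}F = \sum_Q \operatorname{P}_Q F$ and similarly for $G$, and note that for the product $(\operatorname{P}_Q F)(\operatorname{P}_{Q'}G)(\operatorname{P}_{M_3}H)$ to contribute at output frequency $\sim N$ one needs $Q' \approx -Q$ up to $O(R)$, so that $O(1)$ values of $Q'$ contribute per $Q$ and the triple product is frequency-supported in a cube of side $\lesssim R$. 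I would then rerun the paraproduct argument using the \emph{cube-localized} form of \eqref{ITFCOS2} (that is, with $\Gamma_N$ a cube of side $R$ instead of an annulus of size $M_1$), which produces powers of $R$ rather than $M_1$ on the $F$- and $G$-factors. Squaring and summing in $Q$ by almost-orthogonality (as in \eqref{CubeLocGainPreq}--\eqref{CubeLocGain2}) recovers the full norms $\|\operatorname{P}_{M_1}F\|_{X^{s,1/2+}}$ and $\|\operatorname{P}_{M_2}G\|_{X^{0+,1/2+}}$, and $N^s \leq M_1^s$ supplies the last factor of $M_1^s$.

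The main obstacle will be the standard one for these estimates: the bookkeeping of the many small positive $+$-losses coming from \eqref{Dual1} and the $\T^2$-endpoint of \eqref{ITFCOS2} must be carried out carefully enough that no net positive power of $M_1$ survives. The positivity of $s$ and the slack provided by the $0+$ regularity on $G$, combined with the ordering $M_3 \leq M_2 \leq M_1$, are exactly what is needed to absorb these losses, and no new technique beyond those already used in the proof of Lemma \ref{LocalLemma} should be required.
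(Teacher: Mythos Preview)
Your overall plan is correct and is exactly what the paper has in mind: the lemma is stated as ``a straightforward adaptation'' of the proof of Lemma \ref{LocalLemma}, and your dyadic output decomposition plus two-regime split is the right skeleton.

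There is, however, a genuine gap in your treatment of the paraproduct regime $N\sim M_1$. Applying \eqref{Dual1} on the annulus $|\xi|\sim N\sim M_1$ costs $M_1^{0+}$, and applying \eqref{ITFCOS2} to $\operatorname{P}_{M_1}F$ on the same annulus costs another $M_1^{0+}$. Neither the positivity of $s$ nor the $0+$ on $G$ can absorb these: the $N^s\sim M_1^s$ weight on the left is exactly matched by $\|\operatorname{P}_{M_1}F\|_{X^{s,\frac12+}}\sim M_1^s\|\operatorname{P}_{M_1}F\|_{X^{0,\frac12+}}$ on the right, and the slack $M_2^{0+}$ from $G$ is useless against powers of $M_1$ when $M_1\gg M_2$. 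So your sentence ``the $0+$ weight on $G$ supplies a small gain $M_2^{-0+}$ which absorbs the residual $+$-losses'' is false in this regime as written.

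The fix is the one you already use in the other regime: cube-decompose also in the paraproduct case. Partition the $M_1$-annulus into cubes $Q$ of side $\sim M_2$; then $(\operatorname{P}_Q F)(\operatorname{P}_{M_2}G)(\operatorname{P}_{M_3}H)$ has output Fourier support in a cube of side $\lesssim M_2$, so both \eqref{Dual1} and the cube form of \eqref{ITFCOS2} produce only $M_2^{0+}$ losses, and almost-orthogonality in $Q$ recovers $\|\operatorname{P}_{M_1}F\|_{X^{0,\frac12+}}$. This is precisely how the paper handles the ``first term'' in \eqref{Bony} (see \eqref{TLPIDet}--\eqref{CubeLocGain2}, where the cubes have side $N_1$, the scale of the low-frequency factor). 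Your cube argument for the high--high--to--low regime with $R=\max(N,M_3)$ is fine; you just need to run the same idea in both regimes.
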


\section{Probabilistic Results}

\subsection{The Linear Schr\"odinger Equation on $\T^d$ with Random Data}\label{linSchrTd}

\
Here we prove almost surely uniform convergence of the randomized Schr\"odinger flow to the initial datum, at the $H^{0+}$ level. More precisely, we show that 
$e^{it \Delta} f^{\omega} \to f^{\omega}$ as $t \to 0$ uniformly over $x \in \T^d$ and   
$\mathbb{P}$-almost surely for data $f^\omega$ defined as
\begin{equation}\label{RandInitData}
f^\omega(x) = \sum_{n \in \mathbb{Z}^d} \frac{g_n^\omega}{\lb n \rb^{\frac{d}{2}+ \alpha}} e^{i n \cdot x}, \qquad x\in \T^d \, ,
\end{equation} 
where $\alpha >0$ and
each $g_n^\omega$ is complex and independently drawn from a standard normal distribution. In fact, the argument we present works 
for independent $g_n^\omega$ drawn from any distribution with sufficiently strong decay properties. We present the standard normal case for definiteness. 
Fix $t \in \R$. We have that $\mathbb{P}$-almost surely   
\[ e^{it \Delta} f^\omega \in \bigcap_{s <\alpha} H^s(\T^d). \] 
This is an immediate consequence of \eqref{HyperInitialdataPreq} below, taking the union over~$\varepsilon >0$.
Moreover, for all $t \in \R$ the $e^{it\Delta}f^{\omega}$ are $\mathbb{P}$-almost surely continuous functions\footnote{In fact they 
belong to $\bigcap_{s <\alpha} C^s(\T^d)$ $\mathbb{P}$-almost surely, 
but we will never need this stronger information.} of the $x$ variable. 
This is a consequence of the higher integrability property \eqref{LInftyHigherIntegrability} below, from which one can easily deduce 
uniform convergence as $N \to \infty$
of the sequence~$P_{\leq N} f^{\omega}$, with probability larger than~$1-\varepsilon$. So the limit~$f^{\omega}$ is continuous 
with the same probability, and the almost sure continuity follows taking the union over~$\varepsilon >0$.

Now we prove the first part of Theorem \ref{MainTHM2}, namely

\begin{proposition}\label{RandomLinCOnv1}
Let $\alpha >0$. 
For $\mathbb{P}$-almost every $f^\omega$ of the form \eqref{ReprInitialData} we
have that  
$$
e^{it\Delta} f^\omega(x) \to f^\omega(x) \quad \mbox{as $t \to 0$} 
$$
for every $x \in \T^d$ and uniformly. 
\end{proposition}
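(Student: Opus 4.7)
The plan is to show that $t\mapsto e^{it\Delta}f^\omega$ is $\mathbb{P}$-almost surely continuous as a map from $[-1,1]$ into $C(\T^d)$; since $e^{i0\cdot\Delta}f^\omega=f^\omega$, this yields uniform convergence at $t=0$. The main step is to establish the Hölder-type increment bound
\[
\bigl(\mathbb{E}\|e^{it\Delta}f^\omega-e^{is\Delta}f^\omega\|_{L^\infty_x(\T^d)}^p\bigr)^{1/p}\lesssim_{p,\theta}|t-s|^\theta,
\]
for every $\theta\in(0,\alpha/2)$ and $p\geq 2$, so that Kolmogorov's continuity criterion, applied with $\theta p>1$ to this Banach-valued process, produces an a.s.\ continuous modification. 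Since for each fixed $t$ the $x$-continuity of $e^{it\Delta}f^\omega$ has already been recorded in the paragraph preceding the proposition, and for each fixed $x$ the map $t\mapsto e^{it\Delta}f^\omega(x)$ is smooth, this modification agrees with the original process on a set of full measure.

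To prove the increment bound I would decompose dyadically $f^\omega=\sum_N P_N f^\omega$ and estimate each block. The difference
\[
G_N^\omega(x):=(e^{it\Delta}-e^{is\Delta})P_N f^\omega(x)=\sum_{|n|\sim N}\frac{g_n^\omega(e^{-i|n|^2 t}-e^{-i|n|^2 s})}{\langle n\rangle^{d/2+\alpha}}e^{in\cdot x}
\]
is a centered Gaussian at every fixed $x$ with variance
\[
\sigma_N^2=\sum_{|n|\sim N}\frac{|e^{-i|n|^2 t}-e^{-i|n|^2 s}|^2}{\langle n\rangle^{d+2\alpha}}\lesssim|t-s|^{2\theta}\,N^{4\theta-2\alpha},
\]
where the last bound uses $|e^{ia}-1|\lesssim|a|^\theta$ for any $\theta\in[0,1]$. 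Khintchine's inequality combined with Fubini gives $(\mathbb{E}\|G_N^\omega\|_{L^q_x}^q)^{1/q}\lesssim\sqrt{q}\,\sigma_N$. Since $G_N^\omega$ has Fourier support in $|n|\sim N$, Bernstein's inequality yields $\|G_N^\omega\|_{L^\infty_x}\lesssim N^{d/q}\|G_N^\omega\|_{L^q_x}$, so selecting $q=\max(p,\log N)$ makes $N^{d/q}=O(1)$ and, after monotonicity of $L^p_\omega$ norms,
\[
(\mathbb{E}\|G_N^\omega\|_{L^\infty_x}^p)^{1/p}\lesssim\sqrt{\log N}\,|t-s|^\theta\,N^{2\theta-\alpha}.
\]
Summing over dyadic $N$ via Minkowski's inequality in $L^p_\omega$ converges whenever $\theta<\alpha/2$, giving the increment bound.

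The main obstacle is balancing the Bernstein cost against the Khintchine gain: the deterministic passage $L^q_x\to L^\infty_x$ at frequency $N$ costs a factor $N^{d/q}$, which is catastrophic for fixed $q$, but the probabilistic $\sqrt{q}$ improvement makes the choice $q\sim\log N$ profitable, replacing $N^{d/q}$ by $O(1)$ at the price of only a $\sqrt{\log N}$ factor. The surviving polynomial decay $N^{2\theta-\alpha}$ with any $\theta<\alpha/2$ then forces summability of the dyadic series and delivers the Hölder-in-$t$ estimate needed to run Kolmogorov. This uses only half of the available regularity $\alpha$, which is comfortable since the statement claims nothing quantitative about the rate.
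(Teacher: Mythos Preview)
Your approach via Kolmogorov's continuity criterion is sound and genuinely different from the paper's. The paper argues by a high--low frequency split: it invokes the bound $\|e^{it\Delta}P_N f^\omega\|_{L^\infty_{x,t}(\T^{d+1})}\lesssim N^{-\alpha+}(-\ln\varepsilon)^{1/2}$ (obtained, like your estimate, from Khintchine plus Bernstein, but in the joint $(x,t)$ variables) to make the tail $e^{it\Delta}P_{>N^*}f^\omega$ and $P_{>N^*}f^\omega$ uniformly small, and then controls the finite low-frequency piece $e^{it\Delta}P_{\le N^*}f^\omega-P_{\le N^*}f^\omega$ by the crude bound $|e^{-i|n|^2t}-1|\le |t||n|^2$. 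No Kolmogorov, no modification issue. Your route is more quantitative---it yields H\"older continuity of $t\mapsto e^{it\Delta}f^\omega$ as a $C(\T^d)$-valued path---at the price of the identification step.

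On that step, your justification is incorrect: the claim that for fixed $x$ the map $t\mapsto e^{it\Delta}f^\omega(x)$ is smooth fails for small $\alpha$, since differentiating in $t$ inserts a factor $|n|^2$ and the resulting series need not converge (indeed the defining series is not even absolutely convergent pointwise when $\alpha\le d/2$). The clean fix is to observe that $t\mapsto e^{it\Delta}f^\omega$ is, for every $\omega$, strongly continuous into $L^2(\T^d)$ by unitarity of the group. The Kolmogorov modification is continuous into $C(\T^d)\hookrightarrow L^2(\T^d)$ and agrees with the original at each rational $t$ almost surely; two $L^2$-continuous paths agreeing on a dense set coincide everywhere, so almost surely the modification \emph{is} the original for all $t$. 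With this correction your argument goes through.
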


This proposition proves the first part of Theorem \ref{MainTHM2}. Its proof  appears at the end of this section after we establish few lemmata.

We start recalling the following well--known concentration bound:
\begin{lemma}[{\cite[Lemma 3.1]{BurqTzvet}}] \label{largedev}There exists a constant $C$ such that 
\begin{equation}\label{LargDevB} 
\left\| \sum_{n \in \Z^d} g_n^\omega \; a_n \right\|_{L^r_\omega} \leq C r^\frac12 \| a_n\|_{\ell^2_n(\Z^d)}
\end{equation}
for all $r \geq 2$ and $\{ a_n \} \in \ell^2(\Z^d)$. 
\end{lemma}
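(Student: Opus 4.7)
The plan is to combine a reduction to finite sums with the standard moment bounds for Gaussian random variables. First, set $S_N := \sum_{|n| \leq N} g_n^\omega a_n$. Since $\{a_n\} \in \ell^2$, the partial sums form an $L^2_\omega$-Cauchy sequence (by independence and orthogonality) and converge $\mathbb{P}$-almost surely to $S := \sum_n g_n^\omega a_n$. By Fatou's lemma applied in $\omega$, it therefore suffices to establish the bound $\|S_N\|_{L^r_\omega} \leq C r^{1/2} \|a\|_{\ell^2}$ with a constant independent of $N$.

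The crucial observation is that, because the $g_n^\omega$ are independent complex Gaussians and the coefficients $a_n$ are deterministic, the finite sum $S_N$ is itself a centered complex Gaussian random variable. Writing $g_n^\omega = \tfrac{1}{\sqrt{2}}(X_n + i Y_n)$ with $X_n, Y_n$ independent real standard Gaussians, we have $S_N = A_N + i B_N$ where $A_N$ and $B_N$ are centered real Gaussians with variance $\sigma_N^2 = \tfrac12 \sum_{|n|\leq N} |a_n|^2 \leq \tfrac12 \|a\|_{\ell^2}^2$.

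It then remains to recall that for a centered real Gaussian $X$ with variance $\sigma^2$, a direct computation against the Gaussian density yields
\begin{equation*}
\mathbb{E}\bigl[|X|^r\bigr]^{1/r} \;=\; \sigma \left( \frac{2^{r/2}\, \Gamma((r+1)/2)}{\sqrt{\pi}} \right)^{1/r},
\end{equation*}
and by Stirling's formula this is bounded by $C r^{1/2} \sigma$ uniformly for $r \geq 2$. Applying this bound to $A_N$ and $B_N$ and using the triangle inequality in $L^r_\omega$ gives $\|S_N\|_{L^r_\omega} \leq C r^{1/2} \|a\|_{\ell^2}$, uniformly in $N$, as required.

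There is essentially no hard step here: the only subtlety is ensuring the limiting argument passes through (handled by Fatou) and that one has the sharp $\sqrt{r}$ dependence in the Gaussian moment bound rather than, say, a bound of order $r$. This sharp dependence is precisely what allows the later applications of this lemma to sum over dyadic decompositions via Chebyshev in the Gaussian tail regime.
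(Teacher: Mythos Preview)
Your argument is correct. The paper does not actually prove this lemma: it is quoted verbatim from \cite[Lemma~3.1]{BurqTzvet} and introduced as a ``well-known concentration bound,'' so there is no in-paper proof to compare against. Your route---reducing to finite sums, observing that a finite linear combination of independent complex Gaussians is again complex Gaussian, and then reading off the $r^{1/2}$ moment growth from the explicit Gamma-function formula and Stirling---is the standard direct proof in the Gaussian case and is entirely sound. One small remark: you do not need (and do not use) independence of $A_N$ and $B_N$; the triangle inequality in $L^r_\omega$ is enough, exactly as you wrote. The original Burq--Tzvetkov lemma is stated for more general distributions (sub-Gaussian tails suffice), and in that generality one typically argues via Khintchine-type inequalities rather than by identifying the sum as Gaussian, but for the purposes of this paper---which works with standard normal $g_n^\omega$---your argument is both shorter and sharper.
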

Using \eqref{LargDevB} with $a_n = e^{i n \cdot x - i |n|^2 t} \lb n \rb^{-\frac{d}{2} - \alpha}$ we obtain  for $r \geq 2$ that
for $f^\omega$ an in~\eqref{RandInitData}
\begin{equation}
 \| \operatorname{P}_N\! e^{it \Delta} f^\omega \|_{L^r_\omega} \leq C r^{\frac12} N^{-\alpha} \, , 
 \end{equation}
with a constant uniform in $t \in \R$. 
From this, we also have improved $L^p_x$ estimates for randomized data.

\begin{lemma}\label{LDBMink}
Let $p \in [2,\infty)$. Assume $f^\omega$ is as in \eqref{RandInitData}. There exists constants $C$ and $c$, 
independent of $t \in \R$, 
such that
\begin{equation}\label{SubGaussianTail} 
\mathbb{P}(\| \operatorname{P}_N\! e^{it \Delta} f^\omega\|_{L^p_x(\T^d)} > \lambda) \leq Ce^{-cN^{2\alpha}\lambda^2} \, .
\end{equation}
In particular, for any $\varepsilon >0$ sufficiently small, we have 
\begin{equation}\label{HigherIntegrability}
 \| \operatorname{P}_N\! e^{it \Delta} f^\omega \|_{L^p_x(\T^d)} \lesssim N^{-\alpha} \left( - \ln \varepsilon \right)^{1/2},
 \quad N \in 2^{\Z} \cup \{ \infty \} \, , 
 \end{equation}
with probability at least $1 - \varepsilon$. 
Thus 
\begin{equation}\label{LInftyHigherIntegrability}
 \| \operatorname{P}_N\! e^{it \Delta} f^\omega \|_{L^{\infty}_x(\T^d)} \lesssim N^{-\alpha +} \left( - \ln \varepsilon \right)^{1/2} ,
 \quad N \in 2^{\Z} \cup \{ \infty \} \, ,
\end{equation}
with probability at least $1 - \varepsilon$.
\end{lemma}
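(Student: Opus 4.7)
The plan is to derive the sub-Gaussian tail bound \eqref{SubGaussianTail} from an $L^r_\omega L^p_x$ moment estimate (itself a consequence of Lemma \ref{largedev}), and then specialize to get \eqref{HigherIntegrability} and \eqref{LInftyHigherIntegrability}.

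\smallskip

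\noindent \textbf{Step 1: Moment bound.} For $r \geq \max(p,2)$ I will first show
\begin{equation}\label{MomBnd}
\| \operatorname{P}_N e^{it\Delta} f^\omega \|_{L^r_\omega L^p_x(\T^d)} \lesssim r^{1/2} N^{-\alpha},
\end{equation}
uniformly in $t \in \R$. Since $r \geq p$, Minkowski's integral inequality yields
$\|\cdot\|_{L^r_\omega L^p_x} \leq \|\cdot\|_{L^p_x L^r_\omega}$.
Writing $\operatorname{P}_N e^{it\Delta} f^\omega(x) = \sum_{n\sim N} g_n^\omega\, a_n(x,t)$ with
$a_n(x,t) = \langle n\rangle^{-d/2-\alpha} e^{i n\cdot x - i |n|^2 t}$ and applying Lemma~\ref{largedev} pointwise in $x$, I get
$\|\operatorname{P}_N e^{it\Delta} f^\omega(x)\|_{L^r_\omega} \lesssim r^{1/2} \big(\sum_{n\sim N} \langle n\rangle^{-d-2\alpha}\big)^{1/2} \lesssim r^{1/2} N^{-\alpha}$,
uniformly in $x$ and $t$. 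Taking the $L^p_x(\T^d)$ norm (which costs only a constant since $\T^d$ has finite measure) gives \eqref{MomBnd}.

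\smallskip

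\noindent \textbf{Step 2: Sub-Gaussian tail.} Chebyshev's inequality together with \eqref{MomBnd} gives, for any $r \geq \max(p,2)$,
\begin{equation}\label{ChebStep}
\mathbb{P}\bigl(\| \operatorname{P}_N e^{it\Delta} f^\omega \|_{L^p_x} > \lambda\bigr) \leq \lambda^{-r}\, \mathbb{E}\|\operatorname{P}_N e^{it\Delta} f^\omega\|_{L^p_x}^r \leq \bigl(\lambda^{-1} C\, r^{1/2} N^{-\alpha}\bigr)^r.
\end{equation}
I then optimize by choosing $r := c\, N^{2\alpha} \lambda^2$ with $c>0$ small enough that $\lambda^{-1} C r^{1/2} N^{-\alpha} \leq e^{-1/2}$. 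Provided this value of $r$ satisfies $r \geq \max(p,2)$, the right-hand side of \eqref{ChebStep} becomes $\leq e^{-r/2} = e^{-c N^{2\alpha}\lambda^2/2}$, which is \eqref{SubGaussianTail} up to relabeling constants. When instead $c N^{2\alpha}\lambda^2 < \max(p,2)$, the claimed bound $C e^{-cN^{2\alpha}\lambda^2}$ is $\gtrsim 1$ by taking $C$ large, so the inequality is trivial.

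\smallskip

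\noindent \textbf{Step 3: High-probability bounds.} For \eqref{HigherIntegrability}, I pick $\lambda = K N^{-\alpha}(-\ln\varepsilon)^{1/2}$ with $K$ sufficiently large so that $C e^{-c N^{2\alpha}\lambda^2} = C e^{-cK^2(-\ln\varepsilon)} \leq \varepsilon$, which proves the estimate holds with probability at least $1-\varepsilon$. The case $N = \infty$ (no projection) is handled by summing the dyadic bounds since the series $\sum_N N^{-\alpha}$ converges. Finally, \eqref{LInftyHigherIntegrability} follows from Bernstein's inequality: for $p$ chosen large enough that $N^{d/p} \leq N^{0+}$, one has $\|\operatorname{P}_N e^{it\Delta} f^\omega\|_{L^\infty_x} \lesssim N^{d/p} \|\operatorname{P}_N e^{it\Delta} f^\omega\|_{L^p_x}$, and applying \eqref{HigherIntegrability} gives the claimed $N^{-\alpha+}$ bound with probability at least $1-\varepsilon$.

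\smallskip

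The only genuinely delicate point is the optimization of $r$ in Step 2, but this is a standard Khintchine-to-sub-Gaussian conversion; all other steps are routine combinations of Minkowski, Chebyshev, and Bernstein.
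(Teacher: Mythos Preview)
Your proof is correct and follows essentially the same approach as the paper: Minkowski to swap $L^r_\omega$ and $L^p_x$, then Lemma~\ref{largedev} pointwise, then the standard moment-to-tail conversion, and finally Bernstein for the $L^\infty$ bound. The paper's proof is simply more terse, stating the moment bound and then asserting that this ``is enough to conclude'' the sub-Gaussian tail, whereas you spell out the Chebyshev-plus-optimization step explicitly.
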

\begin{proof}
We prove \eqref{HigherIntegrability}, then \eqref{LInftyHigherIntegrability} follows by Bernstein inequality.
By Minkowski's inequality and Lemma \ref{largedev} above, we have for any $r \geq p\geq2$
\begin{align*}
\left( \int  \|\operatorname{P}_N\! e^{it \Delta} f^\omega\|_{L^p_x(\T^d)}^r d \mathbb{P}(\omega) \right)^\frac1r 
 \leq \Bigl\|  \| \operatorname{P}_N\! e^{it \Delta} f^\omega\|_{L^r_\omega} \Bigr\|_{L^p_x(\T^d)} 
\leq C N^{-\alpha}r^\frac12.
\end{align*}
which is enough to conclude that $\|\operatorname{P}_N\! e^{it \Delta} f^\omega\|_{L^p_x(\T^d)}$ is a sub-Gaussian random variable
satisfying the tail bound \eqref{SubGaussianTail}.
%

\end{proof}
%
Proceeding as in the proof of Lemma \ref{LDBMink} we also obtain
improved Strichartz estimates for randomized data. 
\begin{lemma} 
Let $p \in [2,\infty)$. Assume $f^\omega$ is as in \eqref{RandInitData}. Then we have, for some constants $C$ and~$c$, independent of 
$t \in \R$ 
the bound
\[ \mathbb{P}\left( \| e^{it\Delta} \operatorname{P}_N\! f^\omega \|_{L^p_{x,t}(\T^{d+1})} > \lambda \right) \leq C e^{-cN^{2\alpha}\lambda^2}.\]
In particular, for any $\varepsilon >0$ sufficiently small, we have 
\begin{equation}\label{ImprStrIneq}
 \| e^{it\Delta} \operatorname{P}_N\! f^\omega \|_{L^p_{x,t}(\T^{d+1})} \lesssim N^{-\alpha} \left( - \ln \varepsilon \right)^{1/2} ,
  \quad N \in 2^{\Z} \cup \{ \infty \} \, ,
 \end{equation}
with probability at least $1 - \varepsilon$. Thus
\begin{equation}\label{LInftyImprStrIneq}
 \| e^{it\Delta} \operatorname{P}_N\! f^\omega \|_{L^\infty_{x,t}(\T^{d+1})} \lesssim N^{-\alpha +} \left( - \ln \varepsilon \right)^{1/2} , 
 \quad N \in 2^{\Z} \cup \{ \infty \} \, ,
 \end{equation}
with probability at least $1 - \varepsilon$. 
\end{lemma}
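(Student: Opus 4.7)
The plan is to mirror the strategy used in the proof of Lemma \ref{LDBMink}, replacing the spatial norm by the space--time norm. First I would fix $(x,t) \in \T^{d+1}$ and apply Lemma \ref{largedev} directly to the Gaussian series
\[
e^{it\Delta}\operatorname{P}_N f^\omega(x) = \sum_{|n|\sim N} g_n^\omega\, \frac{e^{in\cdot x - i|n|^2 t}}{\langle n\rangle^{d/2+\alpha}},
\]
with coefficients $a_n = \langle n\rangle^{-d/2-\alpha}e^{in\cdot x - i|n|^2 t}$. Since $\|\{a_n\}\|_{\ell^2(|n|\sim N)}\lesssim N^{-\alpha}$ uniformly in $(x,t)$, Lemma \ref{largedev} gives
\[
\|e^{it\Delta}\operatorname{P}_N f^\omega(x)\|_{L^r_\omega}\leq C r^{1/2} N^{-\alpha}
\quad\text{for every }r\geq 2.
\]

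Next, for any $r\geq p$ I would apply Minkowski's inequality to exchange the $L^r_\omega$ and $L^p_{x,t}$ norms, exactly as in Lemma \ref{LDBMink}:
\[
\bigl\|\|e^{it\Delta}\operatorname{P}_N f^\omega\|_{L^p_{x,t}(\T^{d+1})}\bigr\|_{L^r_\omega}\leq \bigl\|\|e^{it\Delta}\operatorname{P}_N f^\omega\|_{L^r_\omega}\bigr\|_{L^p_{x,t}(\T^{d+1})} \leq C r^{1/2} N^{-\alpha}.
\]
This shows that the random variable $\|e^{it\Delta}\operatorname{P}_N f^\omega\|_{L^p_{x,t}}$ is sub-Gaussian with parameter $\lesssim N^{-\alpha}$. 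The tail bound
\[
\mathbb{P}\bigl(\|e^{it\Delta}\operatorname{P}_N f^\omega\|_{L^p_{x,t}} > \lambda\bigr) \leq C e^{-c N^{2\alpha}\lambda^2}
\]
then follows by Chebyshev optimized in $r$ (choosing $r\sim N^{2\alpha}\lambda^2$, valid when this is $\geq p$, and absorbing the small-$\lambda$ range into the constant $C$). Setting $\lambda = C N^{-\alpha}(-\ln\varepsilon)^{1/2}$ yields \eqref{ImprStrIneq}.

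Finally, to derive the $L^\infty_{x,t}$ bound \eqref{LInftyImprStrIneq} I would apply Bernstein's inequality in the joint space--time frequency variable. The key observation is that $e^{it\Delta}\operatorname{P}_N f^\omega$ is supported in spatial frequencies $|n|\sim N$ and in temporal frequencies $|\tau|\sim N^2$, so its space--time frequency support has measure $\lesssim N^{d+2}$. Hence Bernstein gives
\[
\|e^{it\Delta}\operatorname{P}_N f^\omega\|_{L^\infty_{x,t}} \lesssim N^{(d+2)/p}\,\|e^{it\Delta}\operatorname{P}_N f^\omega\|_{L^p_{x,t}}\lesssim N^{(d+2)/p - \alpha}(-\ln\varepsilon)^{1/2},
\]
and taking $p$ sufficiently large absorbs $N^{(d+2)/p}$ into $N^{0+}$, yielding \eqref{LInftyImprStrIneq} with probability at least $1-\varepsilon$. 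The only mild subtlety here is justifying Bernstein in the joint variable, which amounts to convolving with a Schwartz kernel adapted to the rectangle $\{|n|\sim N\}\times\{|\tau|\sim N^2\}$ and is entirely routine; the main substantive step is the Minkowski/concentration argument, which is essentially identical to the earlier lemma.
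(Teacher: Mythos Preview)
Your proposal is correct and follows exactly the approach indicated in the paper, which simply states that one proceeds ``as in the proof of Lemma \ref{LDBMink}'' (Minkowski to swap $L^r_\omega$ and $L^p_{x,t}$, then the sub-Gaussian tail bound) and then passes to $L^\infty_{x,t}$ via Bernstein. Your explicit justification of the space--time Bernstein step, using that the space--time Fourier support lies in a box of measure $\lesssim N^{d+2}$ and letting $p\to\infty$, is precisely the intended argument.
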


Fix $t \in \R$. Later we will also need the following bound (with high probability) for the $H^{s}$ norm of $e^{it \Delta}f^{\omega}$
with $s < \alpha$. This is a well know fact that we recall applying again 
\eqref{LargDevB} with $a_n = e^{i n \cdot x - |n|^2 t} \lb n \rb^{-\frac{d}{2} - \alpha +s}$, so that we get for $r \geq 2$ 
\[ 
\|  \operatorname{P}_N \langle D \rangle^s \! e^{it\Delta} f^\omega \|_{L^r_\omega} \leq C r^{\frac12} N^{s-\alpha}, 
\qquad s <\alpha \, .
\]
Here $\lb D \rb$ denotes the Fourier multiplier operator $\lb n \rb$. 
Proceeding as in the proof of Lemma \ref{LDBMink} we also obtain
\[
 \mathbb{P}\left( \|  \langle D \rangle^s \operatorname{P}_N\! e^{it\Delta} f^\omega \|_{L^2_{x}(\T^d)} > \lambda \right) \leq C e^{-cN^{2(\alpha-s)}\lambda^2}, 
\qquad s <\alpha 
\]
and in particular, for any $\varepsilon >0$ sufficiently small 
\begin{equation}\label{HyperInitialdataPreq}
 \|  e^{it \Delta} f^\omega \|_{H^s_{x}(\T^d)} \lesssim  \left( - \ln \varepsilon \right)^{1/2} 
 \qquad  s < \alpha, \quad t \in \R
 \, , 
 \end{equation}
with probability at least $1 - \varepsilon$. Again the constant is uniform on $t \in \R$.

We thanks Chenjie Fan for sharing with use the argument we used in the next proof,  and that strengthens   our original a.e. convergence to a uniform one.

\subsubsection*{Proof of Proposition \ref{RandomLinCOnv1}}

Let us decompose 
\begin{equation}\label{PlugInUniform}
| e^{it\Delta} f^\omega - f^\omega | \leq 
| e^{it\Delta} \operatorname{P}_{>N} f^\omega |
+
| e^{it\Delta} \operatorname{P}_{\leq N} f^\omega - \operatorname{P}_{\leq N} f^\omega |
+  
| \operatorname{P}_{>N} f^\omega |.
\end{equation}
We fix $\lambda >0$ and $\varepsilon > 0$ sufficiently small. Using \eqref{LInftyImprStrIneq} we see that 
\begin{equation}\label{PlugInUniform1}
\| e^{it\Delta} \operatorname{P}_{>N} f^\omega \|_{L^{\infty}_{x, t}(\T^{d+1})} + \| \operatorname{P}_{>N} f^\omega \|_{L^{\infty}_{x}(\T^d)} <  \lambda/2
\end{equation}
holds
for all $N$ sufficiently large, depending on $\lambda, \varepsilon$, with probability larger than $1 - \varepsilon$. 
Let us fix such $N^* = N^* (\lambda, \varepsilon)$. We also fix $s^* >  \frac{d}{2}$. Since
$$
e^{it\Delta} \operatorname{P}_{\leq N^*} f^\omega - \operatorname{P}_{\leq N*} f^\omega
= \sum_{|n| \leq N^*} (e^{-it |n|^2} - 1) e^{in \cdot x} \hat{f^{\omega}}(n) , \, , 
$$
using Cauchy--Schwartz, the summability of $\langle n \rangle^{-2s^*}$ and \eqref{HyperInitialdataPreq} with $s=0$, $t=0$ (in the last inequality) we get
\begin{align}\nonumber
\| e^{it\Delta} \operatorname{P}_{\leq N^*} f^\omega - \operatorname{P}_{\leq N*} f^\omega \|_{L^{\infty}_{\T^d}}
&  
\lesssim \sup_{|n| \leq N^{*} } | e^{-it |n|^2} - 1  |  \left( \sum_{|n| \leq N^{*}} \langle n \rangle^{2s^*} |\hat{f^{\omega}}(n)|^2 \right)^{1/2} 
\\ 
&
\lesssim  |t| (N^{*})^{s^*+2}  \| f^{\omega} \|_{L^2}  \lesssim   |t|(N^{*})^{s^* + 2} (- \ln \varepsilon)^{1/2}  \, ,
\end{align} 
with probability larger than $1 - \varepsilon$. 
Thus
we can find $t^*$ sufficiently small, depending only on $N^{*}$ and $\varepsilon$, such that 
for all $t \in (0, t^*)$ we have
\begin{equation}\label{PlugInUniform2}
\| e^{it\Delta} \operatorname{P}_{\leq N^*} f^\omega - \operatorname{P}_{\leq N^*} f^\omega \|_{L^{\infty}_x (\T^d)} < \lambda / 2
\end{equation}
with probability larger than $1 - \varepsilon$. 
Plugging \eqref{PlugInUniform1}, \eqref{PlugInUniform2} into \eqref{PlugInUniform} we see the following. Given $\lambda > 0$, we 
have found $t^* = t^* (\lambda, \varepsilon)$ such that 
\begin{equation}
\| e^{it\Delta} f^\omega - f^\omega \|_{L^{\infty}_x(\T^d)} < \lambda, \qquad \mbox{for all $t \in (0, t^*)$} \, ,
\end{equation}
with probability larger than $1 - \varepsilon$.
Namely for all $\varepsilon > 0$ we have 
uniform (in $x$) convergence 
$e^{it\Delta} f^\omega \to f^{\omega}$ as $t \to 0$ for all $\omega \in A_{\delta}$ with $\mathbb{P}(A_{\delta}) > 1-2\varepsilon$. 
This means that the uniform convergence fails only if $\omega$ belongs to any of the complementary sets
$A_{\varepsilon}^C$, thus in particular for $\omega \in \bigcap_{k \in \N} A_{1/k}$. Since 
Since $\mathbb{P}\left(  \bigcap_{k \in \N} A_{1/k} \right) = \lim_{k \to 0} \mathbb{P}(A_{1/k}) = 0$ the statement follows. 

\hfill $\Box$

\subsection{The Linear Schr\"odinger Equation on $\R^d$ with Random Data}

\label{SectionlinRandRd}

\

For the linear Schr\"odinger equation on $\R^d$, randomization arguments
similar to those in Section \ref{linSchrTd} can be applied. 
Given $s >0$ and $f \in H^{s}(\R^d)$, we work with the randomized data $f^{\omega}$ defined in \eqref{eq:RdRand}.
As in the periodic case, this argument works for any independent random variables  whose distribution functions 
decay sufficiently rapidly. We work with the standard normal distribution for the sake of definiteness.

 By arguments almost identical to those for the periodic case, we have for any $p \in [2,\infty)$
\begin{align*}
\| P_N e^{it\Delta} f^\omega\|_{L^p(\R^d \times [0,1])} \lesssim N^{-s} (- \ln \varepsilon)^{1/2} \| P_N f \|_{L^2(\R^d)} , 
 \quad N \in 2^{\Z} \cup \{ \infty \} \, ,
\end{align*}
for $\omega$ in a set of probability at least $1-\varepsilon$.
Thus the Bernstein inequality gives  
\begin{align}
\| P_N e^{it\Delta} f^\omega\|_{L^{\infty}(\R^d \times [0,1])} \lesssim N^{-s +} (- \ln \varepsilon)^{1/2} \| P_N f \|_{L^2(\R^d)} ,
 \quad N \in 2^{\Z} \cup \{ \infty \} \, , \label{ASCOBern}
\end{align}
for $\omega$ in a set of probability at least $1-\varepsilon$. In particular
$e^{it \Delta} f^{\omega}$ is $\mathbb{P}$-almost surely continuous\footnote{In fact one can show it is $\mathbb{P}$-almost surely in $C^{s'}_x$ for all $s' < s$.}. 

Moreover one has uniform in $t \in \R$ bounds for 
the $H^s$ norm
\begin{equation}\label{HsRandomizNormOnR}
\| e^{it \Delta} f^\omega \|_{H^s(\R^d)} \lesssim (-\ln \varepsilon)^{1/2} \| f \|_{H^s(\R^d)} \, ,
\end{equation}
for $\omega$ in a set of probability at least $1-\varepsilon$. 
For more general versions of these estimates we refer to \cite[Lemmata 2.1 \& 2.3]{OhOkamPoco}.

Using these estimates and proceeding exactly as in the periodic case, we can establish the first part of Theorem \ref{MainTHM3}, namely
\begin{proposition}\label{RandomLinCOnv2}
Let $s >0$ and $f\in H^s(\R^d)$. For $\mathbb{P}$-almost every $f^\omega$ of the form \eqref{eq:RdRand} we
have  
$$
e^{it\Delta} f^\omega(x) \to f^\omega(x) \quad \mbox{as $t \to 0$} 
$$
for every $x \in \R^d$ and uniformly. 
\end{proposition}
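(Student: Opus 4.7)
The plan is to mimic the argument for Proposition \ref{RandomLinCOnv1} almost verbatim, with the discrete Fourier sum replaced by a continuous integral argument and with estimates \eqref{ASCOBern} and \eqref{HsRandomizNormOnR} playing the roles of \eqref{LInftyImprStrIneq} and \eqref{HyperInitialdataPreq}. Fix $\lambda, \varepsilon >0$. I would first decompose
\begin{equation*}
|e^{it\Delta} f^\omega(x) - f^\omega(x)| \leq |e^{it\Delta} \operatorname{P}_{>N} f^\omega(x)| + |\operatorname{P}_{>N} f^\omega(x)| + |e^{it\Delta} \operatorname{P}_{\leq N} f^\omega(x) - \operatorname{P}_{\leq N} f^\omega(x)|
\end{equation*}
for a dyadic parameter $N$ to be chosen.

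For the first two summands, a dyadic decomposition and summation of \eqref{ASCOBern} over all shells $M \geq N$ gives
$$\| e^{it\Delta} \operatorname{P}_{>N} f^\omega \|_{L^\infty_{x,t}(\R^d \times [0,1])} + \| \operatorname{P}_{>N} f^\omega \|_{L^\infty_x(\R^d)} \lesssim N^{-s+} (-\ln \varepsilon)^{1/2} \| f \|_{H^s(\R^d)}$$
on a set of probability at least $1-\varepsilon$ (the second term comes from the $t=0$ case, possibly enlarging the exceptional set). Since $s > 0$, I can therefore pick $N^* = N^*(\lambda,\varepsilon)$ so that this contribution is at most $\lambda/2$, uniformly in $t \in [0,1]$ and in $x \in \R^d$, on an event of probability $> 1-\varepsilon$.

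The low-frequency term is where the Euclidean argument differs slightly from the torus one. Fix $s^* > d/2$. Because $\widehat{\operatorname{P}_{\leq N^*} f^\omega}$ is supported in $\{|\xi|\leq 2N^*\}$, the Sobolev embedding $H^{s^*}(\R^d) \hookrightarrow L^\infty(\R^d)$ and Plancherel give
\begin{align*}
\| e^{it\Delta} \operatorname{P}_{\leq N^*} f^\omega - \operatorname{P}_{\leq N^*} f^\omega \|_{L^\infty_x(\R^d)}
&\lesssim \| (e^{-it|\xi|^2} - 1) \langle \xi \rangle^{s^*} \widehat{\operatorname{P}_{\leq N^*} f^\omega} \|_{L^2_\xi(\R^d)} \\
&\lesssim |t| \, (N^*)^{s^* + 2 - s} \| f^\omega \|_{H^s(\R^d)},
\end{align*}
and \eqref{HsRandomizNormOnR} bounds the last factor by $C(-\ln\varepsilon)^{1/2} \| f \|_{H^s(\R^d)}$ on an event of probability $> 1-\varepsilon$. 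Choosing $t^* = t^*(\lambda,\varepsilon,N^*)$ sufficiently small makes this contribution less than $\lambda/2$ for every $t \in (0, t^*)$, simultaneously.

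Combining these, with probability at least $1 - 2\varepsilon$ one has $\sup_{x \in \R^d}|e^{it\Delta} f^\omega(x) - f^\omega(x)| < \lambda$ for all $t \in (0, t^*)$. The conclusion then follows exactly as at the end of the proof of Proposition \ref{RandomLinCOnv1}: for each $k\in \mathbb{N}$ denote by $A_{1/k}$ the event on which uniform convergence holds; then $\mathbb{P}(A_{1/k}^c) \leq 2/k$ and the exceptional set is contained in $\bigcap_k A_{1/k}^c$, which has probability zero. The main (mild) obstacle compared to the torus is the passage from the $\ell^2$/Cauchy--Schwarz bound using summability of $\langle n \rangle^{-2s^*}$ to the continuous analogue; the Sobolev embedding step handles this cleanly thanks to the frequency localization to $|\xi| \lesssim N^*$.
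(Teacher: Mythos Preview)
Your proof is correct and follows essentially the same approach as the paper, which explicitly says the argument proceeds ``exactly as in the periodic case'' using \eqref{ASCOBern} and \eqref{HsRandomizNormOnR} in place of their periodic analogues. Your replacement of the discrete Cauchy--Schwarz step by the Sobolev embedding $H^{s^*}(\R^d)\hookrightarrow L^\infty(\R^d)$ together with frequency localization is exactly the natural continuous analogue, and the remaining steps line up verbatim with the proof of Proposition~\ref{RandomLinCOnv1}.
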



\subsection{The Cubic NLS Equation on $\T^d$ ($d=1,2$) with Random Data (Theorem \ref{MainTHM2})}\label{WickOrderedNLS}

\

In this section, we consider the cubic Wick-ordered NLS \eqref{eq:wickNLS} on $\T^d$ ($d=1,2$) as in the work of  
 Bourgain in \cite{bourgain1996invariant}. 
Namely, we look at the nonlinearity 
$$
\mathcal{N}(u) := \pm u \left( |u|^2 - 2 \mu \right), 
\qquad 
\mu := \fint_{\T^d} |u(x, t)|^2 dx \, .
$$
We are interested again in randomized initial data, i.e. $f^{\omega}$ is taken to be of the form \eqref{RandInitData}. Recall (see \eqref{HyperInitialdataPreq}) that such data 
is $\mathbb{P}$-almost surely 
in $H^s$ for all $s < \alpha$  
and
\begin{equation}\label{HyperInitialdata}
\| f^\omega  \|_{H^{s}} \lesssim \left( - \ln \varepsilon \right)^{1/2}, \quad
s < \alpha \, ,
\end{equation}
with probability at least $1 - \varepsilon$, for all $\varepsilon \in (0,1)$ sufficiently small. Since we work with any $\alpha >0$, we 
are considering initial data in $H^{0+}$.
We approximate equation \eqref{eq:wickNLS} as in \eqref{TruncatedNLS}, for all $N \in 2^{\mathbb{N}} \cup \{ \infty \}$. Recall that
$\Phi^{N}_t f^{\omega}$ denotes the associated flow, with initial datum
$$
\Phi^{N}_0 f^{\omega} :=  \operatorname{P}_{\leq N} f^{\omega} =  \sum_{|n| \leq N} \frac{g_n^\omega}{\lb n \rb^{\frac{d}2+ \alpha}} e^{i n \cdot x}  \, .
$$ 
We write $\Phi_t f^{\omega} = \Phi^{\infty}_t f^{\omega}$ for the flow of \eqref{eq:wickNLS} with datum $f^{\omega} = \operatorname{P}_{\infty} f^{\omega}$.   

\begin{proposition}\label{Bourgain1/2}
Let $d=1,2$ and $\alpha >0$. 
Let $N \in 2^{\mathbb{N}} \cup \{ \infty \}$. For all $\sigma \in [0, \frac12)$, the following holds. 
Assume 
\begin{equation}\label{BourgainAssumption}
u = u(I) + u(II), 
\quad u(I) = e^{it \Delta} \operatorname{P}_{\leq N} f^{\omega},
\quad \| u(II) \|_{X^{\alpha + \sigma, \frac12+}}   < 1 
\end{equation}
and the same for $v$.
Then 
\begin{equation}\label{WickThesis1}
\|   \mathcal{N}  (u)   \|_{X^{\alpha + \sigma, - \frac12+}} \lesssim 
\left( - \ln \varepsilon \right)^{3/2}
\end{equation}
\begin{equation}\label{WickThesis2}
\|   \mathcal{N}  (u) -  \mathcal{N} (v)    \|_{X^{\alpha + \sigma, - \frac12++}} \lesssim 
 \left( - \ln \varepsilon \right) 
\| u - v \|_{X^{\alpha + \sigma,  \frac12+}}  
\end{equation}
for initial data of the form \eqref{RandInitData}, with probability at least $1 - \varepsilon$, for all $\varepsilon \in (0,1)$ sufficiently small. 
If we take $u$ as in \eqref{BourgainAssumption} and we instead assume  
$$
v = v(I) + u(II), 
\quad v(I) = e^{it \Delta}  f^{\omega},
\quad \| u(II) \|_{X^{\alpha + \sigma, \frac12+}}   < 1 \, ,
$$ 
we have 
\begin{equation}\label{WickThesis3}
\|   \mathcal{N}  (u) -  \mathcal{N} (v)    \|_{X^{\alpha + \sigma, - \frac12++}} \lesssim N^{-\alpha} \, .
\end{equation}
\end{proposition}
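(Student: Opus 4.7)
The approach follows Bourgain's Wick-ordered analysis in \cite{bourgain1996invariant}. Decompose $u = u_1 + u_2$, where $u_1 := e^{it\Delta} \operatorname{P}_{\leq N} f^{\omega}$ is the rough random linear part and $u_2 := u - u_1$ is the smoother deterministic remainder with $\|u_2\|_{X^{\alpha+\sigma, \frac12+}} < 1$. The nonlinearity $\mathcal{N}(u) = u(|u|^2 - 2\mu)$ expands into eight trilinear pieces which I classify by the number $k \in \{0,1,2,3\}$ of random factors $u_1$; the Wick subtraction $2\mu u$ is split analogously according to which $u$'s contribute to $\mu$ and to the external $u$.

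For $k = 0$ and $k = 1$, the plan is to apply Corollary \ref{uuu} when $d = 1$ and the trilinear estimate \eqref{TrilinearStrich} when $d = 2$, both of which provide up to one derivative of smoothing. Since $\sigma < \frac12$ and $u_1$ belongs to $X^{\alpha-, \frac12+}_\delta$ with probability $1-\varepsilon$ (a restriction-space analogue of \eqref{HyperInitialdataPreq} obtained from Lemma \ref{largedev} together with Lemma \ref{MainLemmaGeneralized} taking $Y = X^{\alpha-, \frac12+}$), the trilinear estimates place every low-randomness term into $X^{\alpha+\sigma, -\frac12++}$ with the required loss $(-\ln\varepsilon)^{k/2}$. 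The Wick corrections $\mu_2 u_2$ and $\mu_2 u_1$ are harmless since $\mu_2 \lesssim \|u_2\|_{L^2}^2$.

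The substantial terms are $k = 2$ and $k = 3$, where the Wick cancellation is essential. In Fourier, $u_1 \bar u_1 u_1$ has coefficient at $(n, \tau)$ given by a sum over $n = n_1 - n_2 + n_3$ of $g_{n_1}^\omega \bar g_{n_2}^\omega g_{n_3}^\omega \prod_i \langle n_i \rangle^{-d/2 - \alpha}$, localized in modulation near $\tau = |n_1|^2 - |n_2|^2 + |n_3|^2$. The two resonant diagonals $\{n_2 = n_1\}$ and $\{n_2 = n_3\}$ contribute exactly the two copies of $\mu_1 u_1$ that are subtracted by $2\mu_1 u_1$ (modulo a harmless triple-diagonal overcount), leaving a non-resonant sum. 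I would decompose dyadically in the scales $\langle n_i\rangle$ and $\langle \tau - |n|^2\rangle$, estimate the Gaussian trilinear form on each block by $(-\ln\varepsilon)^{3/2}$ times its $\ell^2$ norm via Lemma \ref{largedev} (with a union bound over the relevant scales), and then count representations $(n_1, n_2, n_3)$ using Jarnik's theorem on lattice points on convex arcs when $d = 2$, or a divisor bound when $d = 1$. This yields \eqref{WickThesis1}; the $k = 2$ terms follow by the same machinery, pairing one $g$ with a $\bar g$ to trigger the resonant subtraction and placing the single $u_2$ into a deterministic Strichartz slot.

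The difference estimates come from trilinear expansion of $\mathcal{N}(u) - \mathcal{N}(v)$. In \eqref{WickThesis2} the random parts of $u$ and $v$ coincide, so every term in the expansion contains at least one factor of $u_2 - v_2$; reapplying the estimates above with the exponent of random factors reduced by one produces the $(-\ln\varepsilon)^1\|u-v\|_{X^{\alpha+\sigma, \frac12+}}$ bound. For \eqref{WickThesis3} the random linear parts differ by $e^{it\Delta} \operatorname{P}_{>N} f^\omega$, whose $X^{\alpha-, \frac12+}_\delta$ norm is $\lesssim N^{-\sigma}(-\ln\varepsilon)^{1/2}$ with high probability, which together with the cancellation structure yields the factor $N^{-\alpha}$. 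The main obstacle is the $k = 3$ case: the Wick cancellation must be matched with the dyadic modulation decomposition so that Jarnik's bound applies to the correct shifted paraboloid, and it is precisely this lattice-point count that restricts the smoothing to the open range $\sigma \in [0, \tfrac12)$.
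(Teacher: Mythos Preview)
Your overall strategy follows Bourgain and is broadly sound, but your case split by the \emph{number} $k$ of random factors is not the right organizing principle, and it creates a genuine gap in the $k=1$ case. The correct split, which the paper uses, is by whether the \emph{highest-frequency} factor is random (type~I) or deterministic (type~II).

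Concretely: when $k=1$ and the single random factor $u_1$ sits at the top frequency $M_1$, your proposed tools do not close. The estimate \eqref{TrilinearStrich} provides no smoothing whatsoever --- it has the same $s$ on both sides, so if the top-frequency input is only in $X^{\alpha-,\frac12+}$ the output is only in $X^{\alpha-,-\frac12++}$, not $X^{\alpha+\sigma,-\frac12++}$. And Corollary~\ref{uuu} is stated and proved on $\R^d$, not $\T^d$; the periodic $d=1$ analogue (Erdo\u{g}an--Tzirakis) gives at best $\sigma' < \min(2s,\tfrac12)$, which for small $\alpha$ forces $\sigma' < 2\alpha$ and cannot reach the full range $\sigma \in [0,\tfrac12)$ claimed in the proposition. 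The paper instead treats this configuration (its ``Case $J_2 = J_3 = II$'') probabilistically: one exploits the Gaussian coefficient $g_{n_1}^\omega/\langle n_1\rangle^{d/2+\alpha}$ at the top frequency together with Bourgain's lattice-point counts on the resonant set $R_n$. So the probabilistic/lattice-point machinery is needed not only for $k=2,3$ but for \emph{every} configuration in which a random factor carries the top frequency, including $k=1$; conversely, for $k=2$ with the type~II factor on top, \eqref{TrilinearStrich} suffices and no Wick cancellation is invoked.

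Two smaller points. First, your appeal to Lemma~\ref{MainLemmaGeneralized} to place $u_1$ in $X^{\alpha-,\frac12+}$ goes the wrong direction (that lemma bounds a $Y$-norm by an $X^{s,b}$-norm); the paper computes $\|e^{it\Delta}f^\omega\|_{X^{0+,\frac12+}}$ directly from the Fourier support on the paraboloid. Second, for \eqref{WickThesis3} you claim the $X^{\alpha-,\frac12+}$ norm of $e^{it\Delta}\operatorname{P}_{>N}f^\omega$ is $\lesssim N^{-\sigma}$, but that norm does not decay in $N$. The $N^{-\alpha}$ gain in the paper comes from the explicit weight $\langle n_{j^*}\rangle^{-(d/2+\alpha)}$ inside the trilinear sum once one restricts to $|n_{j^*}|>N$, not from an a~priori norm bound on the tail.
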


\begin{remark}
Recall that $\alpha$ indicates  the regularity of the initial datum. We are denoting by $\sigma$ the amount of smoothing we can prove for 
the Wick--ordered cubic nonlinearity $\mathcal N$. 
More precisely, since the initial data \eqref{RandInitData} belongs to $H^{\alpha -}$, we can interpret this statement as saying that, with 
arbitrarily large probability, $\mathcal N$ is~$\sigma +$ smoother than $f^\omega$. Since $\sigma < \frac12$ is permissible, we reach 
$\frac12 -$ smoothing for $\mathcal N$ and, combining with \eqref{Basic3}, also for the Duhamel contribution 
$\Phi_{t}^N f^\omega - e^{it\Delta} P_{\leq N}f^\omega$.   
\end{remark}
We postpone the proof of Proposition \ref{Bourgain1/2} to the end of the section. 
Recall that $\eta$ is a smooth cut-off of the unit interval. 
Let us fix $\alpha > 0$. Using \eqref{Basic3}, \eqref{Basic2} and Proposition \ref{Bourgain1/2} one can show that
for all 
$\delta >0$ sufficiently small the following holds.
For all $N \in 2^{\mathbb{N}} \cup \{ \infty \}$, the map 
\begin{equation}\label{FixedPointDrift}
\Gamma^N(u) :=  \eta(t) e^{it\Delta} \operatorname{P}_{\leq N} f^{\omega} - i \eta(t) \int_0^t e^{i(t-s) \Delta} \operatorname{P}_{\leq N} \mathcal{N} (u (\cdot, s)) \, ds  
\end{equation}
is a contraction on the set
\begin{equation}\label{SetUnifBoundOutsideExceptSet}
\left\{ e^{it \Delta} \operatorname{P}_{\leq N} f^{\omega} + g, \quad \| g \|_{X^{\alpha + \sigma, \frac12+}_{\delta}}  < 1  \right\} 
\end{equation}
equipped with the $X^{\alpha + \sigma, \frac12+}_{\delta}$ norm, outside an exceptional set (we call it a $\delta$--exceptional set) of initial data of 
probability smaller than $e^{-\delta^{-\gamma}}$, with $\gamma >0$
a given small constant. Notice that this holds uniformly over $N \in 2^{\mathbb{N}} \cup \{ \infty \}$.
Again, this is a standard routine calculation that we omit (see for instance \cite[Section 3.5.1]{ETBook}). 
We only explain how to find the relation between the local existence time $\delta$ and the size of the exceptional set. 
Given any $\varepsilon \in (0,1)$ sufficiently small,  
using \eqref{Basic3}, \eqref{Basic2} and Proposition \ref{Bourgain1/2}, we have 
$$
\| \Gamma^N(u) -  \eta(t) e^{it\Delta} \operatorname{P}_{\leq N} f^{\omega} \|_{X^{\alpha + \sigma, \frac12+}_{\delta} }
\lesssim \delta^{0+} \left( - \ln \varepsilon \right)^{3/2} \, ,
$$
for all $f^{\omega}$ outside an exceptional set of probability smaller than $\varepsilon$. Letting $\delta$ such that $\varepsilon = e^{-\delta^{-\gamma}}$  
with $\gamma >0$ a fixed small constant, we have $C \delta^{0+} \left( - \ln \varepsilon \right)^{3/2} < 1$ for all $\delta >0$ sufficiently small.
Note that the measure $e^{-\delta^{-\gamma}}$ of the $\delta$--exceptional set converges to zero as $\delta \to 0$. 
In particular, for $\omega$ outside the $\delta$--exceptional set, the fixed point $\Phi^N_t f^{\omega}$ of the map \eqref{FixedPointDrift} belongs to 
the set \eqref{SetUnifBoundOutsideExceptSet}, namely   
\begin{equation}\label{UnifBoundOutsideExceptSet}
\| \Phi^N_t f^{\omega}
-  e^{it\Delta} \operatorname{P}_{\leq N} f^{\omega} \|_{X^{\alpha + \sigma, \frac12+}_{\delta} }  < 1, 
\qquad N \in 2^{\mathbb{N}} \cup \{ \infty \} \, .
\end{equation}

We are now ready to prove Theorem \ref{MainTHM2}. 
\subsubsection*{Proof of Theorem \ref{MainTHM2}}
Notice that \eqref{r2prob-lin}  is the content of Proposition \ref{RandomLinCOnv1}. To prove \eqref{AlmostEverywhereConvergenceRandom}, 
let us assume that we have proved 
\begin{equation}\label{NonlinearProbMaxEst}
\lim_{N \to \infty} \left\| \sup_{0 \leq t \leq \delta} | \Phi_{t} f^{\omega} (x) - \Phi^{N}_{t} f^{\omega} (x) | \right\|_{L^{2}_x(\T^2)} = 0
\end{equation}  
for all $f^{\omega}$ outside a $\delta$--exceptional set $A_\delta$. This means that given 
$f^{\omega}$ we can find, $\mathbb{P}$-almost surely, a $\delta_{\omega}$ such that \eqref{NonlinearProbMaxEst} is satisfied. Indeed, if we could not do so, this would mean
that $f^{\omega} \in \bigcap_{\delta >0} A_{\delta}$, and the probability of this event is zero, since $\mathbb{P}(A_\delta) \to 0$ as $\delta \to 0$. So, using
Proposition \ref{MaxEstObv} with $\delta = \delta_{\omega}$, we have $\mathbb{P}$-almost surely 
$$
\lim_{t \to 0} \Phi_{t}^{\omega} f^{\omega}(x) - f^{\omega}(x) = 0, 
\qquad \mbox{for a.e. $x \in \T^2$} \, ,
$$ 
as claimed. It remains to prove \eqref{NonlinearProbMaxEst}. We decompose
$$
|\Phi_t f^{\omega} - \Phi^N_t f^{\omega} |
\leq 
|e^{it \Delta} \operatorname{P}_{>N} f^{\omega}| + 
|\Phi_t f^{\omega} - e^{it \Delta}  f^{\omega}   -  (  \Phi^N_t f^{\omega} -  e^{it \Delta}  \operatorname{P}_{\leq N} f^{\omega}   ) | \, ,
$$
Thus, recalling the decay of the high frequency linear term given by \eqref{LInftyImprStrIneq}, it remains to show that
\begin{equation}
\lim_{N \to \infty} \left\| \sup_{0 \leq t \leq \delta}  |\Phi_t f^{\omega} - e^{it \Delta}  f^{\omega}  
 -  (  \Phi^N_t f^{\omega} -  e^{it \Delta}  \operatorname{P}_{\leq N} f^{\omega}   ) | \right\|_{L^{2}(\T^2)} = 0 \, ,
\end{equation}
for all $f^{\omega}$ outside a $\delta$--exceptional set. 

For any $\alpha >0$, we can choose $\sigma$ sufficiently close to $\frac12$ that 
\begin{equation}\label{CompareWithThisS}
s_{\T} < s_{\T^2} = \frac12 < \alpha + \sigma \, .
\end{equation}

Thus,
using the $X^{s,b}$ space embedding from Lemma \ref{MainLemma}, it suffices to prove
\begin{equation}\label{CubicPlugging1}
\lim_{N \to \infty} \left\| w - w^N  \right\|_{X^{\alpha + \sigma,\frac12+}_{\delta}} = 0 \, ,
\end{equation} 
where  
$$
w^N := \Phi^N_t f - e^{it\Delta} \operatorname{P}_{\leq N} f^{\omega}, 
\qquad
w := w^{\infty}   \, .
$$
Notice that by \eqref{UnifBoundOutsideExceptSet} we have
$$
\| w^N \|_{X^{\alpha + \sigma,\frac12+}_{\delta}} < 1, 
\qquad N \in  2^{\mathbb{N}} \cup \{ \infty \} \, .
$$
Since for $t \in [0, \delta]$ we have 
\begin{equation}
w -  w^N
= - i \eta(t) \int_{0}^{t'} e^{i(t-t')\Delta} \left( \mathcal N (\Phi_{t'} f^{\omega}) - \operatorname{P}_{\leq N} \mathcal N (\Phi^N_{t'} f^{\omega}) \right) dt' \, ,
\end{equation}
using \eqref{Basic3}, \eqref{Basic2}, we get 
\begin{equation}\label{CubicPlugging2}
\| w -  w^N  \|_{X^{\alpha + \sigma,\frac12+}_{\delta}}
\lesssim  \delta^{0+} \| \mathcal N (\Phi_{t} f) 
- \operatorname{P}_{\leq N} \mathcal N (\Phi^N_{t} f) \|_{X^{\alpha + \sigma, - \frac12++}_{\delta}} \, .
\end{equation}
%
%
%
We decompose
\begin{align}\label{CubicPlugging2Decomp}
\mathcal N (\Phi_{t} f) 
& 
- \operatorname{P}_{\leq N} \mathcal N (\Phi^N_{t} f) = 
\\ \nonumber
&
\operatorname{P}_{\leq N} \left( \mathcal N  (e^{it\Delta} \operatorname{P}_{\leq N} f^{\omega} + w) 
 -
\mathcal N (e^{it\Delta} \operatorname{P}_{\leq N} f^{\omega} + w^N) \right)
+
\mbox{Remainders} \, ,
\end{align}
where 
$$
\mbox{Remainders} := 
\operatorname{P}_{\leq N} \left( \mathcal N  (e^{it\Delta}  f^{\omega} + w) -
\mathcal N  (e^{it\Delta} \operatorname{P}_{\leq N} f^{\omega} + w) \right)
+
\operatorname{P}_{> N} \mathcal N  (\Phi_{t} f) \, .
$$
Notice that by \eqref{WickThesis1}, \eqref{WickThesis3} we have 
\begin{equation}\label{RemaindersToZero}
\|  \mbox{Remainders} \|_{X^{\alpha + \sigma, -\frac12++}_{\delta}} \to 0 \quad \mbox{as $N \to \infty$} \, ,
 \end{equation}
with probability at least $1 - \varepsilon$.
Using \eqref{WickThesis2} we can estimate
\begin{align}\label{CubicPlugging3}
 \| \operatorname{P}_{\leq N}  \left( \mathcal N  (e^{it\Delta} \operatorname{P}_{\leq N} f^{\omega} + w) 
\right.   
-  
\mathcal N (e^{it\Delta} \operatorname{P}_{\leq N} f^{\omega} + & \left. w_N) \right)    \|_{X^{\alpha + \sigma, -\frac12++}_{\delta}}
 \\ \nonumber
 & 
\lesssim 
\left( - \ln \varepsilon \right) \left\| w - w^N     \right\|_{X^{\alpha + \sigma,\frac12+}_{\delta}}
\end{align}
and \eqref{CubicPlugging2}, \eqref{CubicPlugging2Decomp}, \eqref{CubicPlugging3} give
\begin{equation}
 \left\|   w - w^N     \right\|_{X^{\alpha + \sigma,\frac12+}_{\delta}} 
 \lesssim
\delta^{0+} \left( - \ln \varepsilon \right)   \left\| w - w^N     \right\|_{ X^{\alpha + \sigma,\frac12+}_{\delta}} + 
\left\| \mbox{Remainders} 
  \right\|_{X^{\alpha + \sigma,-\frac12++}_{\delta}}
\end{equation}
with probability at least $1 - \varepsilon$.  
Since with our choice of $\varepsilon = e^{-\delta^{-\gamma}}$ we have $C \delta^{0+} \left( - \ln  \varepsilon \right)^{3/2} < 1$, 
we can absorb the first term on the right hand side into the left hand side and we still have that 
\eqref{RemaindersToZero} holds outside a $\delta$--exceptional set.
Thus letting $N \to \infty$ the proof of~\eqref{AlmostEverywhereConvergenceRandom} 
is complete.

To prove \eqref{EverywhereConvergenceRandom} we  
proceed as before. We show that for any $\delta >0$ sufficiently small we have 
\begin{equation}\label{LastThing}
\Phi_t f^{\omega} -  f^{\omega} \in X^{\frac{d}{2}+, \frac12 +}_{\delta} \, .
\end{equation}
for $f^{\omega}$ outside a $\delta$-exceptional set $A_{\delta}$. 
This means that given 
$f^{\omega}$ we can find, $\mathbb{P}$-almost surely, a $\delta_{\omega}$ such that \eqref{LastThing} is satisfied. Indeed, if we could not do so, this would mean
that $f^{\omega} \in \bigcap_{\delta >0} A_{\delta}$, and the probability of this event is zero, since $\mathbb{P}(A_\delta) \to 0$ as $\delta \to 0$.
Once we know that 
$\Phi_{t}f^{\omega} - e^{it\Delta} f^{\omega} \in X^{\frac{d}{2}+,\frac12+}_{\delta_{\omega}}$ 
we can use 
$X^{\frac{d}{2}+,\frac12+}_{\delta_{\omega}} \hookrightarrow C_t ([0,\delta_{\omega}] ;  H^{\frac{d}2+}(\Omega^d))$ and $H^{\frac{d}{2}+}(\Omega^d) \hookrightarrow C_x(\Omega^d)$ 
(Sobolev embedding) 
to get \eqref{EverywhereConvergenceRandom}, that so holds with probability $=1$.
To prove \eqref{LastThing} we use the smoothing, exactly as before, except that now we 
have to require the stronger inequality
$$
\frac{d}{2} < \alpha + \sigma \, .
$$  
Since we can take $\sigma <\frac12$ for $d = 1, 2$, we see that the previous condition is satisfied as long as $\alpha > \frac{d-1}{2}$.  
This concludes the proof.

\hfill $\Box$
 
 \begin{remark}
It is worthy to remark that, comparing with for instance \cite{bourgain1996invariant}, the procedure which allows to promote a statement valid on a $\delta$-exceptional set $A_\delta$
for arbitrarily small $\delta >0$ to a 
statement which is valid with probability $=1$ is far easier. In particular we only need that $\lim_{\delta \to 0} \mathbb{P}(A_{\delta}) =0$ 
but we do not need any efficient upper bound of the convergence rate. This is because we are considering a property which has to be verified only at 
time $t=0$ a.s.,
instead that in a given small time interval containing $t=0$, as in \cite{bourgain1996invariant}. 
 \end{remark}

We are now ready to prove the smoothing estimates given in Proposition \ref{Bourgain1/2}. 
\subsubsection*{Proof of Proposition \ref{Bourgain1/2}}
 Notice that the Wick--ordered nonlinearity can be written as
\begin{equation}\label{NewSumDiffIdentityPreq}
\mathcal{N}(u(x, \cdot))
= 
\sum_{n_2 \neq n_1, n_3} \widehat{u}(n_1) \widehat{\overline{u}}(n_2) \widehat{u}(n_3) e^{i (n_1 - n_2 + n_3) \cdot x}
- \sum_{n} \widehat{u}(n)  |\widehat{u}(n)|^2 e^{i n \cdot x} 
\end{equation}
where we are looking at the nonlinear term for fixed time and $\widehat{u}(\cdot)$ denotes the space Fourier coefficients.
From \eqref{NewSumDiffIdentityPreq}, exploiting the symmetry $n_1 \leftrightarrow n_3$, we also have the identity 
\begin{align}\label{NewSumDiffIdentity}
& \mathcal{N}(u(x, \cdot)) - \mathcal{N}(v(x, \cdot)) 
\\ \nonumber
& 
=
\sum_{n_2 \neq n_1, n_3} (\widehat{u}(n_1) - \widehat{v}(n_1)) \widehat{\overline{u}}(n_2) \widehat{u}(n_3) e^{i (n_1 - n_2 + n_3) \cdot x}
- \sum_{n} (\widehat{u}(n) - \widehat{v}(n)) |\widehat{u}(n)|^2 e^{i n \cdot x} 
\\ \nonumber
& 
+ \sum_{n_2 \neq n_1, n_3} (\widehat{u}(n_3) - \widehat{v}(n_3)) \widehat{\overline{v}}(n_2) \widehat{v}(n_1) e^{i (n_1 - n_2 + n_3) \cdot x}
- \sum_{n} (\widehat{u}(n) - \widehat{v}(n) )  |\widehat{v}(n)|^2 e^{i n \cdot x} 
\\ \nonumber
& 
+ \sum_{n_2 \neq n_1, n_3} ( \widehat{\overline{u}}(n_2) - \widehat{ \overline{v}}(n_2)) \widehat{v}(n_1) \widehat{v}(n_3) e^{i (n_1 - n_2 + n_3) \cdot x}
- \sum_{n} ( \widehat{\overline{u}}(n) - \widehat{\overline{v}}(n) )  \widehat{u}(n) \widehat{v}(n) e^{i n \cdot x}  \, .
\end{align} 
Using 
\eqref{NewSumDiffIdentity} (and recalling again the symmetry $n_1 \leftrightarrow n_3$), it is 
clear that we can reduce to proving 
the (more general) Lemma \ref{Bourgain1/2General} given below. It implies the desired statement since each summation in the above decomposition can be controlled by letting
$$
u_{j} (n_j) = u (n_j), \;\; v(n_j), \;\; \text{ or } \; u(n_j) - v(n_j) \, .
$$ 

\hfill$\Box$

\
The proof of Lemma \ref{Bourgain1/2General} below follows closely the arguments introduced by Bourgain in \cite{bourgain1996invariant}. We still display  the details
 since we need to quantify the gain of regularity. One will note though that the proof of Lemma \ref{Bourgain1/2General}  
 reported here is much easier than the one presented in \cite{bourgain1996invariant} since in our case $f^\omega$
is more regular, namely we consider $\alpha >0$ instead of $\alpha =0$.

\begin{lemma}\label{Bourgain1/2General}
Let $d=1,2$ and $\alpha >0$.  Let $N \in 2^{\mathbb{N}} \cup \{ \infty \}$. For all $\sigma \in [0, \frac12)$ the following holds. 
Assume for $j = 1, 2, 3$
\begin{equation}\label{DecU_j}
u_j(I) = e^{it \Delta} \operatorname{P}_{\leq N} f^{\omega},
\qquad
\| u_j(II) \|_{X^{\alpha + \sigma, \frac12+} } < 1  .
\end{equation}
Let $J_j \in \{  I, II \}$, $j=1, 2, 3$.
Then, for all
$\varepsilon \in (0,1)$ sufficiently small we have the following 
\begin{equation}\label{WWHTP1}
\|   \mathcal N  (u_1(J_1), \overline{u_2}(J_2), u_3(J_3))  \|_{X^{\alpha + \sigma, - \frac12+}} \lesssim 
  \left( - \ln \varepsilon \right)^{3/2} \, ,
\end{equation}
and more precisely
\begin{equation}\label{WWHTP2}
\|   \mathcal N  (u_1(II), \overline{u_2}(J_2), u_3(J_3))  \|_{X^{\alpha + \sigma, - \frac12++}} \lesssim 
 \left( - \ln \varepsilon \right)  \| u_1(II)  \|_{X^{\alpha + \sigma, \frac12+}}  \, ,
\end{equation}
\begin{equation}\label{WWHTP3}
\|   \mathcal N  (u_1(J_1), \overline{u_2}(II), u_3(J_3))  \|_{X^{\alpha + \sigma, - \frac12++}} \lesssim 
 \left( - \ln \varepsilon \right)  \| u_2(II)  \|_{X^{\alpha + \sigma, \frac12+}}  \, ,
\end{equation}
with probability at least $1 - \varepsilon$.
Moreover, if in \eqref{DecU_j} we replace for some $j=j^*$ the projection operator $\operatorname{P}_{\leq N}$
by $\operatorname{P}_{> N}$, then the estimate \eqref{WWHTP1} with $J_{j^*} = I$ holds with an extra factor $N^{-\alpha}$ on the right hand side. 
\end{lemma}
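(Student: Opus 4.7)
\textbf{Proof plan for Lemma \ref{Bourgain1/2General}.}

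The plan is to split into eight cases according to $(J_1,J_2,J_3)\in\{I,II\}^3$ and to treat separately (i) the cases in which at least one $u_j(J_j)$ is of type $II$, and (ii) the fully random case $(I,I,I)$. In case (i), I would argue deterministically by first performing a standard Littlewood--Paley decomposition and then invoking the trilinear bound \eqref{TrilinearStrich} (for $d=2$; the $d=1$ analogue is simpler), placing the highest-frequency factor at regularity $\alpha+\sigma$. The deterministic inputs are $\|u_j(II)\|_{X^{\alpha+\sigma,\frac12+}}<1$ for the $II$ factor, together with the probabilistic improvements: by Lemma \ref{MainLemmaGeneralized} combined with \eqref{ImprStrIneq} and \eqref{HyperInitialdataPreq}, each $u_j(I)=e^{it\Delta}P_{\leq N}f^\omega$ satisfies $\|u_j(I)\|_{X^{\alpha+\sigma,\frac12+}}\lesssim(-\ln\varepsilon)^{1/2}$ with probability at least $1-\varepsilon$. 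The $II$ factor produces exactly the right-hand norm appearing in \eqref{WWHTP2}--\eqref{WWHTP3}.

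For case (ii) I would follow Bourgain's argument from \cite{bourgain1996invariant}. By duality it suffices to bound
\begin{equation*}
\Bigl|\sum_n \int \langle n\rangle^{\alpha+\sigma}\,\hat F(n,\tau)\,\hat v(n,\tau)\,d\tau\Bigr|
\end{equation*}
for all $v$ with $\|v\|_{X^{0,\frac12--}}\leq1$, where $F=\mathcal{N}(u_1(I),\overline{u_2}(I),u_3(I))$. Using the free evolution the space-time Fourier transform of $F$ reads
\begin{equation*}
\hat F(n,\tau)=\sum_{\substack{n_1-n_2+n_3=n\\ n_2\neq n_1,n_3}}\frac{g^\omega_{n_1}\overline{g^\omega_{n_2}}g^\omega_{n_3}}{\langle n_1\rangle^{\frac d2+\alpha}\langle n_2\rangle^{\frac d2+\alpha}\langle n_3\rangle^{\frac d2+\alpha}}\,\delta\bigl(\tau+|n_1|^2-|n_2|^2+|n_3|^2\bigr),
\end{equation*}
and the Wick restriction $n_2\neq n_1,n_3$ ensures that the resonance function
\begin{equation*}
\tau+|n|^2=|n|^2-|n_1|^2+|n_2|^2-|n_3|^2=2(n_2-n_1)\cdot(n_2-n_3)
\end{equation*}
is never zero, so the weight $\langle\tau+|n|^2\rangle^{-\frac12+}$ furnishes genuine off-diagonal gain.

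I would then apply a hypercontractive/Wiener-chaos tail bound to the trilinear Gaussian sum above: outside an event of probability $\varepsilon$ one absorbs the random weights into the factor $(-\ln\varepsilon)^{3/2}$, reducing matters to the purely deterministic estimate
\begin{equation*}
\sum_n \langle n\rangle^{2(\alpha+\sigma)}\!\!\sum_{\substack{n_1-n_2+n_3=n\\ n_2\neq n_1,n_3}}\!\!\frac{1}{\langle n_1\rangle^{d+2\alpha}\langle n_2\rangle^{d+2\alpha}\langle n_3\rangle^{d+2\alpha}\langle(n_2-n_1)\cdot(n_2-n_3)\rangle^{1-}}<\infty.
\end{equation*}
I expect the main obstacle to be this deterministic counting inequality in dimension $d=2$: after fixing $n$ and $n_2$, the remaining summation counts the number of $n_1\in\Z^2$ on a level set of the quadratic form $(n_2-n_1)\cdot(n_2-n_3)$ with $n_3=n-n_1+n_2$, and this is where Jarn\'\i k's theorem on lattice points on convex arcs enters to provide an almost-sharp upper bound with only a subpolynomial loss. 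The condition $\sigma<\frac12$ is dictated precisely by this counting step, while the constraint $\alpha>0$ provides the slack in the $\langle n_j\rangle$-weights needed to absorb the loss and close the sum. In our setting this step is considerably easier than in \cite{bourgain1996invariant} because there $\alpha=0$, whereas here any $\alpha>0$ is allowed.

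Finally, \eqref{WWHTP2} and \eqref{WWHTP3} follow by the same scheme, one of the $u_j(I)$ factors being replaced by $u_j(II)$ which produces the stated right-hand-side norm, and the last assertion (with $P_{\leq N}$ replaced by $P_{>N}$ in the $j^\ast$-th factor) follows from the same computation after observing that the corresponding weight $\langle n_{j^\ast}\rangle^{-\frac d2-\alpha}$ is now supported on $|n_{j^\ast}|>N$ and hence contributes the extra factor $N^{-\alpha}$ upon rearrangement.
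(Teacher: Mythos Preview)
Your plan has a genuine gap in the treatment of case (i). The claim that $\|u_j(I)\|_{X^{\alpha+\sigma,\frac12+}}\lesssim(-\ln\varepsilon)^{1/2}$ is false: the random datum $f^\omega$ lies almost surely in $H^s$ only for $s<\alpha$, so $e^{it\Delta}P_{\leq N}f^\omega$ is bounded (uniformly in $N$) only in $X^{s,\frac12+}$ for $s<\alpha$, not in $X^{\alpha+\sigma,\frac12+}$. Indeed, the entire point of the lemma is to exhibit $\sigma$ derivatives of smoothing on $\mathcal N$ that the linear evolution itself does \emph{not} possess. Consequently, invoking \eqref{TrilinearStrich} with the highest-frequency factor at regularity $\alpha+\sigma$ works only when that highest-frequency factor happens to be of type $II$. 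When the largest frequency sits on a type-$I$ factor while some lower-frequency factor is of type $II$ (precisely the configurations relevant for \eqref{WWHTP2}--\eqref{WWHTP3}), the deterministic trilinear route collapses.

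The paper's proof is organized around exactly this obstruction. It first uses \eqref{TrilinearStrich} only to dispose of the case where the highest-frequency input $w_1$ is type $II$ (this is the easy reduction), and then devotes the bulk of the argument to the remaining case $w_1=w_1(I)$. There it performs a second reduction to modulations $\langle\tau+|n|^2\rangle\lesssim N^{11/10}$, writes each type-$II$ factor via an integral representation $w(II)=\int\phi(\lambda)\sum_m b_\lambda(m)e^{im\cdot x-i|m|^2t}\,d\lambda$ with $\ell^2$-normalized coefficients $b_\lambda$, and then treats the four subcases $(J_2,J_3)\in\{I,II\}^2$ separately. The mixed cases cannot be closed by hypercontractivity alone (only the Gaussian factors participate in that); instead one applies Cauchy--Schwarz in the deterministic $b_\lambda$-variables and appeals to Bourgain's lattice-point counting lemmas (Lemmas~1 and~2 of \cite{bourgain1996invariant}) to bound the cardinalities of the constrained index sets. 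This is the step your outline is missing, and it is not a routine adaptation of the $(I,I,I)$ case.

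A minor point: your assertion that the Wick restriction $n_2\neq n_1,n_3$ forces the resonance function $2(n_2-n_1)\cdot(n_2-n_3)$ to be nonzero is correct only in $d=1$; in $d=2$ the two vectors may be orthogonal. This does not by itself break the counting, but it does mean the ``off-diagonal gain'' you invoke is not automatic there.
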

Notice that by the symmetry $n_1 \leftrightarrow n_3$ the estimate \eqref{WWHTP2} implies an analogous estimate for $u_3(II)$.
\

Before we pass to the proof we should remark that Lemma \ref{Bourgain1/2General} proves an almost sure gain of smoothness of $\sigma=\frac12-$  for the nonhomogeneous part of the solution of \eqref{eq:wickNLS} with initial data $f^\omega \in H^{\alpha-}, \, \alpha>0$.  This smoothing effect should be compared to the one recorded in Corollary \ref{uuu} proved in a deterministic manner. There we proved that if the  initial data is in $H^{0+}$ then basically there is only a $0++$ smoothing.

\begin{proof}
We prove \eqref{WWHTP1}, \eqref{WWHTP2}, \eqref{WWHTP3} in the case $N = \infty$.
It is then immediate to adapt the proof to $N \in \mathbb{N}$ and to prove the second part of the statement.
Moreover, we first give the proof in dimension $d=2$, which is the hardest case. At the end of the proof we explain how to handle 
the case $d=1$. 
We split the nonlinearity into two parts:
\begin{align*}
 \mathcal{N}_1  (u_1(J_1), \overline{u_2}(J_2), u_3(J_3)) 
 &= 
 \sum_{n_2 \neq n_1, n_3} \widehat{u_1(J_1)}(n_1) \widehat{\overline{u_2}(J_2)}(n_2) \widehat{u_3(J_3)}(n_3) e^{i (n_1 - n_2 + n_3) \cdot x} \, , \\
 \mathcal{N}_2  (u_1(J_1), \overline{u_2}(J_2), u_3(J_3)) 
 &= \quad \sum_{n} \quad \widehat{u_1(J_1)}(n) \widehat{\overline{u_2}(J_2)}(n) \widehat{u_3(J_2)}(n) e^{i n \cdot x} \, .
\end{align*}
We prove \eqref{WWHTP1}, \eqref{WWHTP2}, \eqref{WWHTP3} for $\mathcal N_{1}$, which is the most challenging contribution.   
The proof for $\mathcal{N}_2$ is elementary, so we leave the details to the reader. We decompose over dyadic scales 
$N_1, N_2 , N_3$ in the following way:
\begin{align}\nonumber
&  \|  \mathcal N_{1}  (u_1(J_1), \overline{u_2}(J_2), u_3(J_3))   \|_{X^{\alpha + \sigma, -\frac12++}}
 \\ \nonumber
 &
  \leq    \sum_{N_1, N_2 , N_3} 
  \| \mathcal N_{1}  (\operatorname{P}_{N_1} u_1(J_1), \operatorname{P}_{N_2} \overline{u_2}(J_2), 
  \operatorname{P}_{N_3} u_3(J_3))  \|_{X^{\alpha + \sigma, -\frac12++}}
 \\ \nonumber
& =:  \sum_{M_1, M_2, M_3} \| \mathcal N_{1}  (\operatorname{P}_{M_1} w_1(J_1), 
\operatorname{P}_{M_2} w_2(J_2), \operatorname{P}_{M_3} w_3(J_3) ) \|_{X^{\alpha + \sigma, -\frac12++}}
  \end{align}
where we denoted with $M_1, M_2, M_3$ the decreasing order of $N_1, N_2, N_3$. Notice that in this way  
$w_1$ denotes the $u_j$ supported on the largest frequency. 
We estimate this sum by first doing some reductions and then considering several cases. First we show that we 
can reduce to considering the case where the highest--frequency function is a random linear flow; i.e.
\begin{equation}\label{TheLargeIsRandom}
w_1( J_1) = w_1(I) \, .
\end{equation} 
Indeed if $w_1(J_1) = w_1(II)$ we get, using \eqref{TrilinearStrich}    
\begin{align}\label{SummingThis1}
&  \| \mathcal N_{1}  (\operatorname{P}_{M_1} w_1(II), \operatorname{P}_{M_2} w_2(J_2), \operatorname{P}_{M_3} w_3(J_3) ) \|_{X^{\alpha + \sigma, - \frac12++}}
\\ \nonumber
& \lesssim 
 \|  w_1(II) \|_{X^{\alpha + \sigma, \frac12+}}
 \|   w_2 (J_2) \|_{X^{0+, \frac12+}}  
\|   w_3 (J_3) \|_{X^{0, \frac12+}} 
 \, ,
\end{align}
On the other hand, recalling \eqref{DecU_j} and \eqref{HyperInitialdata} we have  
\begin{equation}\label{w_jBoundedness}
\| w_j (II) \|_{X^{\alpha + \sigma, \frac12+}} < 1, 
\qquad   
\| w_j (I) \|_{X^{0+, \frac12+}}   \lesssim \left( - \ln \varepsilon \right)^{1/2}, 
\end{equation}
where the second inequality holds with probability at least $1 - \varepsilon$. 
Thus, when $w_1( J_1) = w_1(II)$ the estimates \eqref{WWHTP1}, \eqref{WWHTP2}, \eqref{WWHTP3} follow 
summing the square of \eqref{SummingThis1} over $M_1, M_2, M_3$, factorizing the sum, and then using Plancherel and \eqref{w_jBoundedness}.   
To prove the second bound in \eqref{w_jBoundedness} 
one should notice that the space-time Fourier transform of $e^{it \Delta} f^\omega$ is
 $$
 \widehat{e^{it \Delta} f^\omega }(n, \tau) 
 = \frac{g^{\omega}}{\langle n \rangle^{\frac{d}{2} + \alpha} } \delta(\tau + |n|^2) \, ,
 $$ 
 where $\delta$ is the delta function. So a direct computation gives 
 $$
 \| e^{it \Delta} f^\omega  \|_{X^{0 + , \frac{1}{2} + } }^{2}
 = \sum_{n} \frac{|g_n^{\omega}|^2}{\langle n \rangle^{d + 2 \alpha -}}   < \infty \, ,
 $$ 
which using  
\begin{equation}\label{HypercontractivityFinal}
\int g_{n_j}^\omega g_{n_j'}^\omega d \mathbb{P}(\omega) = 0, 
\qquad
\int g_{n_j}^\omega \overline{g_{n_j'}^\omega} d \mathbb{P}(\omega) = \left\{ \begin{array}{lll} 0 & \mbox{if} & j \neq j' \\ 1 & \mbox{if} &  j= j' \end{array} \right. \, , 
\end{equation}
immediately implies  
$$
 \| \| e^{it \Delta} f^\omega  \|_{X^{0 + , \frac{1}{2} + } } \|_{L^{2}_{\omega}}^2 
 \sum_{n} \frac{1}{\langle n \rangle^{d + 2 \alpha -}}   < \infty  \, .
$$
Using the hypercontractivity (basically \eqref{HypercontractivityFinal} many times)
we can upgrade this to an $L^{p}_{\omega}$ bound, for any $p < \infty$, 
with a constant $Cp^{1/2}$ (see \cite[Proposition 4.5]{MR3518561} for details). Proceeding as in the proof of 
Lemma \ref{LDBMink}, this 
implies the second bound in \eqref{w_jBoundedness} 
for all $\omega$ outside a set of probability smaller than $\varepsilon$, as required.

Then we perform  a second reduction to remove frequencies which are far from the paraboloid. More precisely, we 
denote with 
$\operatorname{P}_{A}$ the space-time Fourier projection into the set $A$ and  
our goal is to reduce 
\begin{align}\label{SBBBoundALLTHECUTOFF}
& \sum_{M_1, M_2, M_3} \|  \mathcal N_{1} \left( \operatorname{P}_{M_1} w_1 (I) , 
\operatorname{P}_{M_2} w_2 (J_2) ,  \operatorname{P}_{M_3} w_3 (J_3) \right)  \|_{X^{\alpha + \sigma, -\frac12++}}^2
\\ \nonumber
&=
 \sum_{N, M_1, M_2, M_3} N^{2\alpha + 2\sigma} \| \operatorname{P}_N \mathcal N_{1} \left(  \operatorname{P}_{M_1} w_1 (I) , \operatorname{P}_{M_2} w_2 (J_2)  ,  \operatorname{P}_{M_3} w_3 (J_3)  \right) \|_{X^{0, -\frac12++}}^2
\end{align}
to 
\begin{equation}\label{FarFromParaboloid}
\sum_{N, M_1, M_2, M_3 } 
N^{2\alpha + 2\sigma } \|  \operatorname{P}_{N} \operatorname{P}_{\left\{ \lb \tau + |n|^2 \rb \leq N^{1 + \frac{1}{10}} \right\} } \mathcal N_{1} \left(  
  \operatorname{P}_{M_1} w_1 (I)  \operatorname{P}_{M_2} w_2 (J_2)   \operatorname{P}_{M_3} w_3 (J_3)   \right)  \|_{X^{0, -\frac12++}}^2 
 \end{equation}
(the $\frac{1}{10}$ is removable, however it does not create any problems and facilitates the computations).
To obtain this reduction, it is sufficient to show that projection of the nonlinearity onto the complementary set is appropriately bounded; i.e. that
 \begin{align}\label{FVLReduction}
& \sum_{N, M_1, M_2, M_3 } 
 N^{2\alpha + 2\sigma} \| \operatorname{P}_{N} \operatorname{P}_{\left\{ \lb \tau + |n|^2 \rb > N^{\frac{11}{10}} \right\} }
  \mathcal N_{1} \left(  \operatorname{P}_{M_1} w_1 (I) , \operatorname{P}_{M_2} w_2 (J_2) ,  \operatorname{P}_{M_3} w_3 (J_3)  \right) \|_{X^{0, -\frac12++}}^2
\\ \nonumber
& 
 \qquad \qquad \qquad \qquad
 \lesssim
    \left( - \ln \varepsilon \right)
\| w_2 (J_2) \|^2_{_{X^{0+, \frac12+}}}
\| w_3 (J_2) \|^2_{_{X^{0+, \frac12+}}}
 \end{align}
on a set of probability larger than $1- \varepsilon$. Indeed, recalling \eqref{w_jBoundedness} and summing over $N$, this would imply the validity of 
\eqref{WWHTP1}, \eqref{WWHTP2}, \eqref{WWHTP3} for this term.
We could have required a weaker bound than \eqref{FVLReduction}, replacing the $X^{0+, \frac12+}$ norm with an $X^{\alpha +\sigma, \frac12+}$ norm 
if $J_2 = II$ and with a $\ln \frac{1}{\varepsilon}$ factor if $J_2 = I$.  
However, we are able to prove the stronger  estimate \eqref{FVLReduction}. 
To do so we bound 
\begin{align}\label{PlugCaseEasy1}
&
 \sum_{ M_1, M_2, M_3 } N^{2\alpha + 2\sigma} \| \operatorname{P}_{N} \operatorname{P}_{\left\{ \lb \tau + |n|^2 \rb > N^{\frac{11}{10}} \right\} }  \mathcal N_{1} \left(  \operatorname{P}_{M_1} w_1 (I) , \operatorname{P}_{M_2} w_2 (J_2) ,  \operatorname{P}_{M_3} w_3 (J_3)  \right) \|_{X^{0, -\frac12++}}^2
\\ \nonumber
&
\sim
N^{2\alpha + 2\sigma}  \sum_{ \substack{ M_1, M_2, M_3 \\  n \sim N }  } \int 
\frac{ \chi_{ \{   \langle \tau + |n|^{2} \rangle > N^{\frac{11}{10}} \} } }{  \langle \tau + |n|^{2} \rangle^{1--}} 
\left| \widehat{\mathcal N_{1} (\cdot)} (n, \tau) \right|^2 
 \, d \tau
\\ \nonumber
&
\lesssim
N^{2 \alpha - }  \sum_{ \substack{ M_1, M_2, M_3 \\  n \sim N } } \int 
\left| \widehat{\mathcal N_{1} (\cdot)} (n, \tau) \right|^2 \, d \tau
\\ \nonumber
& \sim
 N^{2 \alpha - }  \sum_{  M_1, M_2, M_3  } 
\| \operatorname{P}_N 
\mathcal N_{1} \left(  \operatorname{P}_{M_1} w_1 (I) , \operatorname{P}_{M_2} w_2 (J_2) , \operatorname{P}_{M_3} w_3 (J_3)  \right) \|_{L^{2}_{x,t}}^2 \, .
\end{align}
Then using H\"older's inequality, the improved Strichartz inequality \eqref{ImprStrIneq} for randomized functions (for the $L^{q}$ norm of $w_1 (I)$),
and the Strichartz inequality~\eqref{ITFCOS2} (for the $L^{4}$ norms of $w_2(J_2)$ and  $w_3(J_3)$), we obtain
\begin{align}\label{PlugCaseEasy2}
\| \operatorname{P}_N 
& \mathcal N_{1} \left( 
  \operatorname{P}_{M_1} w_1 (I) , \operatorname{P}_{M_2} w_2 (J_2) , \operatorname{P}_{M_3} w_3 (J_3)  \right) \|_{L^{2}_{x,t}}^2
\\ \nonumber
& 
\leq \| \operatorname{P}_{M_1} w_1 (I) \|^2_{L^q_{x,t}} \| \operatorname{P}_{M_2} w_2 (J_2) \|^2_{L^{4+}_{x,t} } \| \operatorname{P}_{M_3} w_3 (J_3) \|^2_{L^{4+}_{x,t} }  \, .
\\ \nonumber
& 
\lesssim \left( - \ln \varepsilon \right) M_1^{-2\alpha} \| \operatorname{P}_{M_2} w_2 (J_2) \|^2_{L^{4+}_{x,t} } \| \operatorname{P}_{M_3} w_3 (J_3) \|^2_{L^{4+}_{x,t} }  \, ,
\\ \nonumber
&
\lesssim  \left( - \ln \varepsilon \right) M_1^{-2\alpha} 
 \| \operatorname{P}_{M_2} w_2 (J_2) \|^2_{X^{0+, \frac12+} } \| \operatorname{P}_{M_3} w_3 (J_3) \|^2_{X^{0+, \frac12+} } 
\end{align}
where we are taking $q \gg 1$ sufficiently large. 
This holds 
on a set of probability larger than $1 - \varepsilon$. Since $M_{1} \sim N$ 
once we plug \eqref{PlugCaseEasy2} into  
into \eqref{PlugCaseEasy1} the factor $N^{2\alpha-}$ is absorbed by $M_1^{-2\alpha}$ and we can rewrite the remaining factor as $M_{1}^{0-}N^{0-}$. 
Thus, summing over $N, M_1, M_2, M_3$ we obtain \eqref{FVLReduction}. So we have reduced to \eqref{FarFromParaboloid}. 

To handle this term we need a more explicit expression for the functions $w_j$. If we consider functions of the form $w(I)$ 
(here we omit the subscript $j$ to simplify the notation) we already know
\begin{equation}\label{BouLinRepr1}
w (I)(x,t) = \sum_{m} \frac{g_{m}^\omega}{\lb m \rb^{1+\alpha}} e^{ i m \cdot x - i |m|^2 t} \, .
\end{equation}
We can obtain a similar expression for $w(II)$,
namely
\begin{equation}\label{BouLinRepr2}
 w (II)(x, t) = \int \phi(\lambda) \sum_{m} b_{\lambda}(m) e^{ i m \cdot x - i |m|^2 t} \, d \lambda \,,
\end{equation}
where $\phi$ satisfies 
\begin{equation}\label{FinallyPhi}
\int |\phi (\lambda)| \, d \lambda \lesssim \|  w(II) \|_{X^{\alpha + \sigma, \frac12+}} \,,
\end{equation}
and the coefficients $b_{\lambda}(m)$ satisfy 
\begin{equation}\label{RenormalizationB}
\sum_{m} \lb m \rb^{2 \alpha + 1 - } |b_{\lambda}(m)|^2 = 1 \, .
\end{equation}
To prove \eqref{BouLinRepr2}--\eqref{RenormalizationB} we change variables by setting $\tau' = \tau + |m|^2$:
\begin{align}\nonumber
&  w(II)(x,t) = \sum_{m} \int e^{i x \cdot m + i t \cdot \tau} \widehat{  w(II)}(m, \tau) \, d \tau  
\\ \nonumber
& 
= \sum_{m} \int e^{i t \cdot \tau'} e^{i m \cdot x - i |m|^2 t} \widehat{ w(II)}(m, \tau' - |m|^2) \, d \tau'
\\ \nonumber
& 
=  \int    \left( \sum_{\ell} \ell^{2\alpha + 2 \sigma} |\widehat{  w(II)}(\ell, \tau' - |\ell|^2)|^{2}  \right)^{\frac12}
 e^{i t \cdot \tau'}  \sum_{m}   e^{i m \cdot x - i |m|^2 t} b_{\tau'} (m)  \, d \tau',
\end{align}
where 
we have defined 
\begin{equation}\label{DefinitionB}
b_{\lambda} (m) 
:= \frac{ \widehat{  w(II)}(m, \lambda - |m|^2)} {\left( \sum_{\ell} \lb \ell \rb^{2\alpha + 2 \sigma} |\widehat{  w(II)}(\ell, \lambda - |\ell|^2)|^{2} \right)^{\frac12}} \, .
\end{equation}
Thus \eqref{BouLinRepr2} holds with 
$$
\phi (\lambda) := \left( \sum_{\ell} \lb \ell \rb^{2\alpha + 2 \sigma} |\widehat{  w(II)}(\ell, \lambda - |\ell|^2)|^{2}  \right)^{\frac12}
 e^{i t \cdot \lambda}
$$
Notice that \eqref{RenormalizationB} is immediate by the definition \eqref{DefinitionB}. The property \eqref{FinallyPhi}
follows by the Cauchy--Schwartz inequality and changing variables $\lambda' = \lambda - |\ell|^2$:
\begin{align}\nonumber
\int |\phi(\lambda)|  & \, d\lambda \leq \left( \int \frac{d\lambda}{ \langle \lambda \rangle^{1+} }\right)^{\frac12} 
\left( \langle \lambda \rangle^{1+} \lb \ell \rb^{2\alpha + 2 \sigma} |\widehat{  w(II)}(\ell, \lambda - |\ell|^2)|^{2} \, d \lambda \right)^{\frac12}
\\ \nonumber
&
\lesssim 
\left( \langle \lambda' + |\ell|^2 \rangle^{1+} \lb \ell \rb^{2\alpha + 2 \sigma} |\widehat{  w(II)}(\ell, \lambda'  ) \, d \lambda' \right)^{\frac12}
= \|  w(II) \|_{X^{\alpha + \sigma, \frac12+}} \, .
\end{align}
We now come back to the $u$ functions and introduce the notation
\begin{equation}
a_{u(J), \lambda}(m) := \begin{cases}
\frac{g_{m}^\omega}{\lb m \rb^{1+\alpha}} & \mbox{if }  J=I, \vspace{6pt}
\\
b_{\lambda} (m) & \mbox{if }  J=II \, .  
\end{cases}
\end{equation}
Recalling \eqref{BouLinRepr1} and \eqref{BouLinRepr2}, we have
\begin{align}\label{AfterAveraging}
 \operatorname{P}_N & \operatorname{P}_{\left\{ \lb \tau + |n|^2 \rb \leq N^{2s} \right\} }   
\mathcal N_1 (\operatorname{P}_{N_1} u_1 (I), \operatorname{P}_{N_2} u_2 (J_2), \operatorname{P}_{N_3} u_3 (J_3) )     
\\ \nonumber
&
=  \int    
 \operatorname{P}_N \operatorname{P}_{\left\{ \lb \tau + |n|^2 \rb \leq N^{2s} \right\} }  \left( \sum_{|n_j| \sim N_j}  e^{i x \cdot(n_1 -n_2 + n_3)}  e^{-it (|n_1|^2 - |n_2|^2 + |n_3|^2)} \right)
 \\ \nonumber
&      \qquad  \qquad  \qquad  
   \times \prod_{j=1, 2, 3}  a_{u_j(J_j), \lambda_j} (n_j) \delta_{J_j} \biggl( \phi(\lambda_j)   \, d \lambda_j \biggr)  \, ,
\end{align}
where 
$$
\delta_{J_j}\bigl( \phi(\lambda_j)   \, d \lambda_j \bigr)  = \begin{cases}
 1 & \mbox{if }    J_{j} = I,
\\
 \phi(\lambda_j)   \, d \lambda_j  & \mbox{if }    J_{j} = II.
\end{cases}
$$
Thus using Minkowski's inequality and recalling \eqref{FinallyPhi} 
we see that \eqref{FarFromParaboloid} satisfies the desired inequalities
\eqref{WWHTP1}, \eqref{WWHTP2}, \eqref{WWHTP3} as long as we can bound  
\begin{align}\label{BeforeRapDec}
& 
N^{2\alpha + 2\sigma} \left\| \sum_{N_1, N_2, N_3}   
     \operatorname{P}_N \operatorname{P}_{\left\{ \lb \tau + |n|^2 \rb \leq N^{\frac{11}{10}} \right\} } 
    \left( \sum_{|n_j| \sim N_j}  e^{i x \cdot(n_1 -n_2 + n_3)}  e^{-it (|n_1|^2 - |n_2|^2 + |n_3|^2)} \right)
     \right.
  \\ \nonumber
  &  \qquad \qquad \qquad  \qquad \qquad \qquad  \qquad \quad  \quad 
  \left.    \times
   \prod_{j=1, 2, 3}  a_{u_j(J_j), \lambda_j} \right\|^2_{X^{0, -\frac12++}}
  \lesssim  \left( - \ln \varepsilon \right)^3 N^{0-} \, ,
 \end{align}
uniformly in $\lambda_j$, for all $\varepsilon \in (0,1)$ sufficiently small, on a set of probability larger than $1 - \varepsilon$.
All the following estimates are indeed  uniform in $\lambda_j$ and the exceptional set on which 
\eqref{BeforeRapDec} could be not satisfied is independent of $\lambda_j$. We omit the subscript~$\lambda_j$ to simplify the notation.

Since 
\begin{align}
&  \mathcal{F}\Bigl(  e^{i x \cdot(n_1 -n_2 + n_3)} e^{-it (|n_1|^2 - |n_2|^2 + |n_3|^2)} \Bigr)(n, \tau)
\\ \nonumber
&
\qquad \qquad \qquad \qquad 
 = \sum_{n_1 - n_2 + n_3 = n} \delta (\tau + |n_1|^2 - |n_2|^2 + |n_3|^2)  \, ,
\end{align}
where $\mathcal{F}$ is the space-time Fourier transform
and $\delta$ is the delta function, we reduce~\eqref{BeforeRapDec} to showing that
\begin{multline}\label{IntegrationOverMu}
N^{2\alpha + 2\sigma} \sum_{N_1, N_2, N_3} 
\sum_{|n| \sim N }
\frac{ \chi_{ \{  \langle |n|^2 - |n_1|^2 + |n_2|^2 - |n_3|^2  \rangle \leq N^{\frac{11}{10}} \} } }{  \langle |n|^2 - |n_1|^2 + |n_2|^2 - |n_3|^2  \rangle^{1--}}
\\ 
\times
\left|
 \sum_{ \substack{|n_j| \sim N_j, \, n_2 \neq n_1, n_3 \\  n = n_1 -n_2 + n_3   }} 
\prod_{j=1, 2, 3} a_{u_j(J_j)}(n_j)  \right|^2
\lesssim  \left( - \ln \varepsilon \right)^3 N^{0-}. 
\end{multline}
Letting
$$
\mu =  |n|^2  - |n_1|^2 + |n_2|^2 - |n_3|^2 \, .
$$
we see that \eqref{IntegrationOverMu} follows by 
\begin{multline}\label{AfterIntegrationOverMuPreq}
N^{2\alpha + 2\sigma} \sum_{N_1, N_2, N_3} 
\sum_{\mu \in \Z, \langle \mu \rangle \leq N^{\frac{11}{10}} } \frac{1}{\langle \mu \rangle^{1--}}  
\sum_{|n| \sim N} \left|
 \sum_{ R_{n}(n_1, n_2, n_3) } 
\prod_{j=1, 2, 3} a_{u_j(J_j)}(n_j)  \right|^2
\\
\lesssim  \left( - \ln \varepsilon \right)^3   N^{0-}  \, ,
\end{multline}
where for fixed $n, \mu$ we have denoted 
\begin{align}\label{R_nDef}
 R_{n}(n_1, n_2, n_3) 
 : =  & \Big\{ (n_1, n_2, n_3) \in \mathbb{Z}^3 \, : \,
  |n_j| \sim N_j, j = 1, 2, 3,
   \\ \nonumber
 &  
 n_{2} \neq n_1, n_3, \, 
  n_1 -n_2 + n_3 = n, \, 
  \mu =  |n|^2  -  |n_1|^2 + |n_2|^2 - |n_3|^2 
  \Big\}  \, .
\end{align}
The set $R_{n}(\cdot)$ depends on $\mu$ also (like all the sets we define below). However we omit this dependence to simplify the notation.
Since summing $\langle \mu \rangle^{-1+}$ gives an $N^{0+}$ factor, we reduced to prove
\begin{multline}\label{AfterIntegrationOverMuPreq}
N^{2\alpha + 2\sigma} \sum_{N_1, N_2, N_3} 
\sup_{|\mu| \lesssim N^{\frac{11}{10}} }  
\sum_{|n| \sim N} \left|
 \sum_{ R_{n}(n_1, n_2, n_3) } 
\prod_{j=1, 2, 3} a_{u_j(J_j)}(n_j)  \right|^2
\\
\lesssim  \left( - \ln \varepsilon \right)^3   N^{0-}  \, ,
\end{multline}
Notice that in the definition of $R_{n}(\cdot)$ the condition 
$$
|n|^2  -  |n_1|^2 + |n_2|^2 - |n_3|^2  = \mu 
$$
can be equivalently replaced by
$$
 2 (n_1 - n_2 ) \cdot (n_3 - n_2) = \mu \, .
 $$
Recalling that $M_1, M_2, M_3$ is the decreasing order of $N_1, N_2, N_3$, we now notice that we must 
have $N_1 \sim M_1$ or $N_3 \sim M_1$. Indeed, if we assume that both $N_1 \ll M_1$ and $N_3 \ll  M_1$ we must 
have $N_2 \sim M_1 \sim N$ and $\mu \sim N^2$, which contradicts the fact that $ \mu \sim N^{\frac{11}{10}}$. 
Since the roles of $N_1$ and  $N_3$ are symmetric (they are always the size of the indices of the Fourier coefficents of $u_1, u_3$), hereafter we assume that
$$N_1 = M_1 \sim N
\qquad  \mbox{and so $u_1 = w_1$} \, ;
$$ 
recall that $w_1$ is the $u_j$ supported on the largest frequency, and we have previously 
reduced to considering $w_{1}(J_1) = w_1(I)$; see \eqref{TheLargeIsRandom}. Thus, the argument above allows us to further reduce  
\eqref{AfterIntegrationOverMuPreq} to showing that
\begin{equation}\label{AfterIntegrationOverMuFinal}
N^{2\alpha + 2\sigma} 
\sum_{N_1, N_2, N_3}
\sup_{|\mu| \lesssim N_1^{\frac{11}{10}}}  
\sum_{|n| \sim N_1} \left|
 \sum_{R_{n}(n_1, n_2, n_3)} 
\frac{g_{n_1}^\omega}{\lb n_1 \rb^{1+ \alpha}} a_{u_2(J_2)}(n_2) a_{u_3(J_3)}(n_3)   \right|^2
\lesssim \left( - \ln \varepsilon \right)^3    N^{0-}  \, .
\end{equation}

To estimate \eqref{AfterIntegrationOverMuFinal} we can now distinguish few last possibilities.
It is useful to denote 
\begin{align}\label{SDef}
 S(n_1, n_2, n_3) 
 : =  & \Big\{ (n_1, n_2, n_3) \in \mathbb{Z}^3 \, : \,
  |n_j| \sim N_j, j = 1, 2, 3,
   \\ \nonumber
 &  
 n_{2} \neq n_1, n_3, \, 
   \mu =  2 (n_1 - n_2 ) \cdot (n_3 - n_2) 
  \Big\}  \, .
\end{align}

\subsubsection*{Case $J_2 =  J_3 = I$}
We must show  that
\begin{equation}\label{AllRandom}
N^{2\alpha + 2\sigma}  
\sum_{N_1, N_2, N_3}
\sup_{|\mu| \lesssim N_1^{\frac{11}{10}}}  
\sum_{|n| \sim N_1}
\left|
 \sum_{R_{n}(n_1, n_2, n_3)} 
\frac{g_{n_1}^\omega}{\lb n_1 \rb^{1+\alpha}} 
\frac{ \overline{ g_{n_2}^\omega}}{\lb n_2 \rb^{1+\alpha}} 
\frac{g_{n_3}^\omega}{\lb n_3 \rb^{1+\alpha}}   \right|^2
\lesssim  \left( - \ln \varepsilon \right)^3  N^{0-}  \, .
\end{equation}
Recalling that 
\eqref{HypercontractivityFinal}
along with the fact that the sum is restricted over $n_1, n_3 \neq n_2$ and symmetric under $n_1 \leftrightarrow n_3$, we get
\begin{equation}\nonumber 
 \int
\left|
 \sum_{ R_{n}(n_1, n_2, n_3) }
 \frac{g_{n_1}^\omega}{\lb n_1 \rb^{1+\alpha}} 
\frac{ \overline{ g_{n_2}^\omega}}{\lb n_2 \rb^{1+\alpha}} 
\frac{g_{n_3}^\omega}{\lb n_3 \rb^{1+\alpha}} 
 \right|^2 d \mathbb{P}(\omega)
 = 2
 \sum_{ R_{n}(n_1, n_2, n_3)}
 \frac{1}{\lb n_1 \rb^ {2 \alpha +2}} 
\frac{ 1 }{\lb n_2 \rb^{2 \alpha +2}} 
\frac{1}{\lb n_3 \rb^{2 \alpha +2}} \, .
\end{equation}
In the following bound we first restrict the summation over $(n_1, n_2,n_3) \in R_n(n_1, n_2,n_3)$ such 
that $n_1 \neq n_3$ (with a small abuse of notation we do not introduce additional notation
for this restriction). In this 
case
\begin{align}\label{AfterHyperc}
& \int  \sum_{|n| \sim N_1}
  \left| \sum_{R_{n}(n_1, n_2, n_3)} 
\frac{g_{n_1}^\omega}{\lb n_1 \rb^{1+\alpha}} 
\frac{ \overline{ g_{n_2}^\omega}}{\lb n_2 \rb^{1+\alpha}} 
\frac{g_{n_3}^\omega}{\lb n_3 \rb^{1+ \alpha}} \right|^2  d \mathbb{P}(\omega)
\\ \nonumber
&
\lesssim 
\sum_{|n| \sim N_1} 
 \sum_{ R_{n}(n_1, n_2, n_3)}
 \frac{1}{\lb n_1 \rb^ {2 \alpha + 2}} 
\frac{ 1 }{\lb n_2 \rb^{2 \alpha +2}} 
\frac{1}{\lb n_3 \rb^{2 \alpha + 2}}  
\\ \nonumber
&
\lesssim 
 \sum_{ S(n_1, n_2, n_3)}
 \frac{1}{\lb n_1 \rb^ {2 \alpha +2 }} 
\frac{ 1 }{\lb n_2 \rb^{2 \alpha +2}} 
\frac{1}{\lb n_3 \rb^{2 \alpha +2}}
\\ \nonumber
&
\sim
 \sum_{S(n_1, n_2, n_3)}
 N_1^{-2 \alpha -2 } N_2^{-2\alpha -2} N_3^{-2\alpha-2 }
 \\ \nonumber
&
\lesssim 
 N_1^{-2 \alpha -2  } N_2^{-2\alpha -2} N_3^{-2\alpha -2}
 \# S(n_1, n_2, n_3) 
 \\ \nonumber
&
\lesssim 
 N_1^{-2 \alpha -1  } N_2^{-2\alpha} N_3^{-2\alpha}
    \, ,
 \end{align}
where we used that if $n_1 \neq n_3$, then
$$
 \# S(n_1, n_2, n_3) \lesssim N_1 N_2^2 N_3^2 \, ;
$$
this is because once we have fixed $n_2, n_3$ in $N_2^2 N_3^2$ possible ways, we remain with at most $N_1$ choices for $n_1$
by the relation $\mu = 2 (n_1 - n_2 ) \cdot (n_3 - n_2)$. 
If we sum over $(n_1, n_2,n_3) \in R_n(n_1, n_2,n_3)$ such 
that $n_1 = n_3$, the restriction $\mu = 2 |n_1 - n_2|^2$ implies that once we have chosen $n_2$ in $N_{2}^2$
possible ways, we remain with $\lesssim \mu^{0+} \lesssim N_1^{0++}$ choices for $n_1 = n_3$ (since a circle of radius $\mu$ contains $\lesssim \mu^{0+}$ integer points).
This gives an even better bound than the one above. 
Summing the \eqref{AfterHyperc} over $N_2, N_3$ and recalling that $N_1 \sim N$, we have 
bounded the $L^2_{\omega}$ norm of the left hand side of \eqref{AllRandom}
by
$$
N^{2\alpha + 2\sigma} \sum_{N_1}  N_1^{-2 \alpha -1 } 
\lesssim
N^{2 \sigma - 1} \lesssim N^{0-}  \, ,
$$
where we used $\sigma < \frac12$.
Using the hypercontractivity of the Gaussian variables, we can upgrade this to an $L^{p}_{\omega}$ bound, for any $p < \infty$, 
with a constant $Cp^{3/2}$ (see \cite[Proposition 4.5]{MR3518561} for details). Proceeding as in the proof of 
Lemma \ref{LDBMink}, this 
implies \eqref{AllRandom} 
for all $\omega$ outside a set of probability smaller than $\varepsilon$, as required.

\subsubsection*{Case $J_2 =  J_3 = II$}

We show that
\begin{equation}\label{AllDeterministic}
N^{2\alpha + 2\sigma} 
\sum_{N_1, N_2, N_3}\sup_{|\mu| \lesssim N_1^{\frac{11}{10}}}  
\sum_{|n| \sim N_1} \left|
 \sum_{R_{n}(n_1, n_2, n_3)} 
\frac{g_{n_1}^\omega}{\lb n_1 \rb^{1+\alpha}} 
b_2(n_2) b_3(n_3)  \right|^2
\lesssim  \left( - \ln \varepsilon \right) N^{0-}  \, ,
\end{equation}
which clearly implies \eqref{AfterIntegrationOverMuFinal}.
We denote
$$
 R_{n, n_2}(n_1,n_3) := \{ (n_1, n_3) \in \mathbb{Z}^2 : (n_1, n_2, n_3) \in R_{n}(n_1, n_2, n_3)   \} \, ,
$$
and for $j=2,3$
\begin{equation}\nonumber 
   \| b_j \|_{\ell^{2}_{N_j}}^2 := \sum_{|n_j| \sim N_j}  |b_{j} (n_j)|^2 \, .
\end{equation}
Notice that by \eqref{RenormalizationB} (and recalling the change in notations) we have for $\sigma < 1/2$
\begin{equation}\label{GRenormalization}
\sum_{N_j}   N_j^{2\alpha + 2\sigma}   \| b_j \|_{\ell^{2}_{N_j}}^2 \lesssim 1,
\qquad
\sum_{N_j \lesssim N}  N_j^{2\alpha +1}   \| b_j \|_{\ell^{2}_{N_j}}^2 \lesssim N^{0+}  \, .
\end{equation}
Hereafter all the sums over indexes $n_j$ are restricted to $n_j \sim N_j$. We omit this fact in the subscripts to simplify the notation. 
We estimate
\begin{align}\label{SumoverN+b}
   \left| \sum_{R_{n}(n_1, n_2, n_3)} 
\frac{g_{n_1}^\omega}{\lb n_1 \rb^{1+\alpha}} 
b_{2}(n_2) b_{3}(n_3) \right|^2
 & 
 \leq 
\sum_{n_2}  |b_{2} (n_2)|^2 
\sum_{n_2} \left| \sum_{R_{n, n_2}(n_1,n_3)}  \frac{g_{n_1}^\omega}{\lb n_1 \rb^{1+\alpha}} 
b_{3}(n_3) 
\right|^2  
\\ \nonumber
&   \lesssim \| b_2 \|_{\ell^{2}_{N_2}}^2  \sum_{n_2}
\left| 
\sum_{R_{n, n_2}(n_1,n_3)}  \frac{g_{n_1}^\omega}{\lb n_1 \rb^{1+\alpha}} 
b_{3}(n_3) \right|^2 \, ,
\end{align}
where 
we have used the Cauchy--Schwartz inequality with respect to $n_2$ and \eqref{RenormalizationB}.
We further denote 
$$
 S_{n_2}(n_1,n_3) := \{ (n_1, n_3) \in \mathbb{Z}^2 : (n_1, n_2, n_3) \in S(n_1, n_2, n_3)   \} \, .
$$
We recall the estimate
\begin{equation}\label{GeomBound1}
\#  S_{n_2}(n_1,n_3) \lesssim N_{1}^{0+}
\qquad
\mbox{(Lemma 1 part (i) in \cite{bourgain1996invariant})} \,. 
\end{equation}
Thus, summing the \eqref{SumoverN+b} over $|n| \sim N_1$ yields
\begin{align}\label{Plugging1GeomBound1}
& \sum_n   \left| \sum_{R_{n}(n_1, n_2, n_3)} 
\frac{g_{n_1}^\omega}{\lb n_1 \rb^{1+\alpha}} 
b_{2}(n_2) b_{3}(n_3) \right|^2
\\ \nonumber
&
\lesssim  \| b_2 \|_{\ell^{2}_{N_2}}^2  \sum_{n, n_2}
\left| 
\sum_{R_{n, n_2}(n_1,n_3)}  \frac{g_{n_1}^\omega}{\lb n_1 \rb^{1+\alpha}} 
b_{3}(n_3) \right|^2
\\ \nonumber
&
\lesssim  \left( - \ln \varepsilon \right) \| b_2 \|_{\ell^{2}_{N_2}}^2  \sum_{n_2}
\left( 
\sum_{S_{n_2}(n_1,n_3)}   \frac{|b_{3}(n_3)|}{\lb n_1 \rb^{1+\alpha}} 
 \right)^2
\\ \nonumber
&  \lesssim  \left( - \ln \varepsilon \right)
\| b_2 \|_{\ell^{2}_{N_2}}^2  N_{1}^{0+} 
 \sum_{n_2}  \sum_{S_{n_2}(n_1,n_3)}   \frac{|b_{3}(n_3)|^2}{\lb n_1 \rb^{2\alpha +2}} 
 \\ \nonumber
& 
\lesssim  \left( - \ln \varepsilon \right)
 \| b_2 \|_{\ell^{2}_{N_2}}^2  N_{1}^{0+} N_{1}^{-2\alpha - 2} \sum_{S(n_1, n_2, n_3) }  
|b_{3}(n_3)|^2  \, ,
\end{align} 
where we used
\eqref{GeomBound1} 
 and the fact that 
$$
\sum_{S(n_1, n_2, n_3)} =
\sum_{n_2}  \sum_{S_{n_2}(n_1,n_3)}  \, .
$$
To justify the previous computation, in particular the factor of $(- \ln \varepsilon)$, we should first average over $d \mathbb{P}(\omega)$ and then use the 
hypercontractivity of the Gaussian variables. Since this works exactly as in the previous case ($J_2 =  J_3 = I$), we omit the details. We  do the same in \eqref{HypercontrAgain}. 
Denoting
$$
S_{n_3}(n_1, n_2) := \left\{ (n_1, n_2) \in \mathbb{Z}^3 \, : \, (n_1, n_2, n_3) \in S(n_1, n_2, n_3) \right\} \, ,
$$  
we recall   that
\begin{equation}\label{GeomBound2}
\# S_{n_3}(n_1, n_2) \lesssim N_1^{1+} N_2 
\qquad
\mbox{(Lemma 2 part (i) in \cite{bourgain1996invariant} switching $n_{1}$ and $n_3$)}
\, .
\end{equation}
Since
$$
 \sum_{S(n_1, n_2, n_3)}
= 
\sum_{n_3}
\sum_{S_{n_3}(n_1, n_2)}  \, ,
$$
we have by \eqref{GeomBound2} 
\begin{align}\label{Plugging2GeomBound1}
 \sum_{S(n_1, n_2, n_3) }  
|b_{3}(n_3)|^2 
&
\lesssim   
N_1^{1+} N_2 \sum_{n_3}    |b_3(n_3)|^2
\lesssim N_1^{1+} N_2  \| b_3 \|_{\ell^{2}_{N_3}}^2  \,. 
\end{align}
Plugging \eqref{Plugging2GeomBound1} into \eqref{Plugging1GeomBound1} we see that \eqref{AllDeterministic} is satisfied as long as
$$
N^{2 \alpha + 2\sigma} 
\sum_{N_1, N_2, N_3} N_1 ^{ - 2\alpha - 1 + 0+  }  N_2 \| b_2 \|_{\ell^{2}_{N_2}}^2   \| b_3 \|_{\ell^{2}_{N_3}}^2 
 \lesssim N^{0-} \, .
$$
Recalling \eqref{GRenormalization} and the fact that
$N \sim N_1 \gtrsim N_j$, $j=2, 3$, this is immediately verified for~$\sigma < \frac12$. 

\subsubsection*{Case $J_2 = I,  J_3 = II$}

We show that
\begin{equation}\label{RandomDeterministic}
N^{2\alpha + 2\sigma} 
\sum_{N_1, N_2, N_3}\sup_{|\mu| \lesssim N_1^{\frac{11}{10}}}  
\sum_{|n| \sim N_1} \left|
 \sum_{R_{n}(n_1, n_2, n_3)} 
\frac{g_{n_1}^\omega}{\lb n_1 \rb^{1+\alpha}} 
\frac{g_{n_2}^\omega}{\lb n_2 \rb^{1+\alpha}} 
 b_3(n_3)  \right|^2
\lesssim  \left( - \ln \varepsilon \right)^2 N^{0-}  \, ,
\end{equation}
which clearly implies \eqref{AfterIntegrationOverMuFinal}.
Since
$$
\#  R_{n}(n_1, n_2, n_3)
\lesssim N_2 N_3^{0+}
\qquad
\mbox{(Lemma 1 in \cite{bourgain1996invariant})}\, ,
$$
we can estimate using the Cauchy--Schwartz inequality: 
\begin{align}\label{HypercontrAgain}
 & 
 \left|
 \sum_{R_{n}(n_1, n_2, n_3)} 
\frac{g_{n_1}^\omega}{\lb n_1 \rb^{1+\alpha}} 
\frac{g_{n_2}^\omega}{\lb n_2 \rb^{1+\alpha}} 
 b_3(n_3)  \right|^2
\\ \nonumber
& 
\lesssim  \left( - \ln \varepsilon \right)^2
N_2 N_3^{0+}   
 \sum_{R_{n}(n_1, n_2, n_3)} 
\frac{|b_3(n_3)|^2}{ \lb n_1 \rb^{2+2\alpha} \lb n_2 \rb^{2+2\alpha}} 
   \\ \nonumber
 &
\lesssim  \left( - \ln \varepsilon \right)^2 N_2 N_3^{0+} N_{1}^{-2-2\alpha} N_{2}^{-2-2\alpha}
  \sum_{R_{n}(n_1, n_2, n_3)} 
 |b_3(n_3)|^2  
\\ \nonumber
&
\lesssim  \left( - \ln \varepsilon \right)^2
N_1^{-2-2\alpha} N_2^{-1-2\alpha} N_3^{0+}
  \sum_{R_{n}(n_1, n_2, n_3)} 
 |b_3(n_3)|^2 \, .
\end{align}
Summing this over $|n_1|\sim N_1$ yields
\begin{align}\label{Plugging1Geom2}
& \sum_{|n_1| \sim N_1} \left|
 \sum_{R_{n}(n_1, n_2, n_3)} 
\frac{g_{n_1}^\omega}{\lb n_1 \rb^{1+\alpha}} 
\frac{g_{n_2}^\omega}{\lb n_2 \rb^{1+\alpha}} 
 b_3(n_3)  \right|^2
 \\ \nonumber
 &
\lesssim   \left( - \ln \varepsilon \right)^2
N_1^{-2-2\alpha} N_2^{-1-2\alpha} N_3^{0+}
\sum_{|n_1| \sim N_1}  \sum_{R_{n}(n_1, n_2, n_3)} 
 |b_3(n_3)|^2 
 \\ \nonumber
 &
\lesssim   \left( - \ln \varepsilon \right)^2
N_1^{-2-2\alpha} N_2^{-1-2\alpha} N_3^{0+}
\sum_{S(n_1, n_2, n_3)} 
 |b_3(n_3)|^2  \, .
\end{align}
Then since
$$
 \sum_{S(n_1, n_2, n_3)}  
 = \sum_{n_3}
   \sum_{S_{n_3}(n_1, n_2)}    
$$
and
\begin{equation}\label{CardBefSymmetry}
\# S_{n_3}(n_1, n_2) \lesssim N_1^{1+} N_2,
\qquad 
\mbox{(Lemma 2 part (i) in \cite{bourgain1996invariant})}\, ,
\end{equation}
we have 
\begin{equation}\label{Plugging2Geom2}
 \sum_{S(n_1, n_2, n_3)} 
 |b_3(n_3)|^2 
 N_1^{1+} N_2 
 \sum_{n_3}   |b_3(n_3)|^2 
 \lesssim N_1^{1+} N_2    \| b_3 \|_{\ell^{3}_{N_3}}^2 \, .
\end{equation}
Plugging \eqref{Plugging2Geom2} into \eqref{Plugging1Geom2}
we see that the left hand side of \eqref{RandomDeterministic} is bounded by 
$$
N^{2\alpha + 2\sigma }
\sum_{N_1, N_2, N_3}   N_1^{- 2\alpha - 1 + 0+ } N_2^{-2\alpha} N_{3}^{0+}  \| b_3 \|_{\ell^{3}_{N_3}}^2 \lesssim 
N^{0-} 
$$
as required, where we used $\sigma <\frac12$, \eqref{GRenormalization}, and the fact that
$N \sim N_1 \gtrsim N_3$.

\subsubsection*{Case $J_2 = II,  J_3 = I$}
We proceed exactly as in the case 
$J_2 = I,  J_3 = II$, but we exchange the roles of $n_2$ and $n_3$. Notice that everything works symmetrically
under $n_2 \leftrightarrow n_3$ except the fact that the sets $S_{n_3}(n_1, n_2)$ and $S_{n_2}(n_1, n_3)$ do not coincide. However, in the previous argument, 
we only needed the
estimate \eqref{CardBefSymmetry}. Here we instead use    
$$
\# S_{n_2}(n_1, n_3) \lesssim N_1^{1+} N_3,
\qquad 
\mbox{(Lemma 2 part (ii) in \cite{bourgain1996invariant}) } \, ,
$$
whose right hand side is indeed the same as that of \eqref{CardBefSymmetry} after interchanging $N_2 \leftrightarrow N_3$. This concludes the  
proof of \eqref{WWHTP1}, \eqref{WWHTP2}, \eqref{WWHTP3} in dimension $d=2$.

The case $d=1$ is much easier. One 
can easily check that the previous argument indeed adapts and simplifies. 

\end{proof}

\subsubsection*{The quintic NLS on $\T$}

Here we explain how one can prove an analogous $\frac{1}{2}-$ smoothing result for the quintic NLS ($p=5$) on $\T$, 
after removing certain bad resonances from the nonlinearity, as we have done using the Wick order in the cubic case. We plan to 
study this problem in detail
in a
future work.
We consider\footnote{For more information about why this is the relevant nonlinear term to consider in the quintic case, consult~\cite{NS15}.} 
\begin{equation}\label{WOQuinticNonlinearity}
\mathcal{N}(u) := \pm u \left( |u|^5 - 3 \mu \right), 
\qquad 
\mu = \fint_{\T} |u(x, t)|^4 dx \, ,
\end{equation}
and
\begin{equation}\label{eq:wickNLSQuintic}
\begin{cases}
i \partial_t u + \Delta u = \mathcal{N}(u), \quad x\in\T,\\
u(x,0) = f^{\omega}(x),
\end{cases}
\end{equation}
with randomized initial data 
\begin{equation}\label{QuinticRandInitData}
f^\omega(x) = \sum_{n \in \mathbb{Z}} \frac{g_n^\omega}{\lb n \rb^{\frac{1}{2}+ \alpha}} e^{i n \cdot x} \, .
\end{equation} 
Recall that such data is $\mathbb{P}$-almost surely 
in $H^s$ for all $s < \alpha$ (namely we work at $H^{0+}$ level)  
and satisfies a uniform bound for these $H^{s}$ norms with arbitrarily high probability; see \eqref{HyperInitialdata}.
Proceeding as for the cubic equation before, and focusing only on the fully random evolution, 
namely the case $J_j = I$ for $j=1, \ldots, 5$, we reduce to proving the following fact. We fix 
$$
0<\sigma <  \frac12 .
$$ 
Then 
with probability at least $1 - \varepsilon$ , we have
\begin{multline}\label{AllRandomQuintic}
N^{2\alpha + 2 \sigma}
\sum_{N_1, \ldots, N_5}
\sup_{\mu}  
\sum_{|n| \sim N_1}
\Big|
 \sum_{R_{n}(n_1, \ldots, n_5)} 
\frac{g_{n_1}^\omega}{\lb n_1 \rb^{\frac12+\alpha}} 
\frac{ \overline{ g_{n_2}^\omega}}{\lb n_2 \rb^{\frac12+\alpha}} 
\frac{g_{n_3}^\omega}{\lb n_3 \rb^{\frac12+\alpha}}
\\
\times
 \frac{ \overline{ g_{n_4}^\omega}}{\lb n_4 \rb^{\frac12+\alpha}} 
\frac{g_{n_5}^\omega}{\lb n_5 \rb^{\frac12+\alpha}} 
   \Big|^2
\lesssim  \left( - \ln \varepsilon \right)^5  N^{0-}  \, ,
\end{multline}
where 
\begin{align}
 R_{n}(n_1, \ldots, n_5) 
 : =  & \Big\{ (n_1, \ldots, n_5) \in \mathbb{Z}^3 \, : \,
  |n_j| \sim N_j, j = 1, \ldots, 5,
   \\ \nonumber
 &  
 n_{2}, n_4 \neq n_1, n_3, n_5 \, ,
  n_1 -n_2 + n_3 - n_4 + n_5 = n, \, 
   \\ \nonumber
 &  
  \mu =  |n|^2  - |n_1|^2 + |n_2|^2 - |n_3|^2 + |n_4|^2 - |n_5|^2
  \Big\}  \, 
\end{align}
and we have assumed $N_1 = \max\{N_1, \ldots, N_5\}$, so that we can restrict to the case $N \sim N_1$. 
However, the argument below adapts immediately to the case in which the largest frequency 
is $N_2$ (all the other cases are clearly symmetric). Again, averaging in $d \mathbb{P}(\omega)$ and upgrading the corresponding estimate to any $L^{p}_{\omega}$, 
$p \in [2,\infty)$ by hypercontractivity, we reduce to proving, uniformly over $\mu$, the following:
\begin{align}\label{FINE}
& 
N^{2\alpha + 2 \sigma}
\sum_{N_1, \ldots, N_5}
\sum_{|n| \sim N_1}
 \sum_{R_{n}(n_1, \ldots, n_5)} 
\prod_{j=1, \ldots, 5} \frac{1}{\lb n_j \rb^{1+ 2\alpha}} 
\lesssim    N^{0-}  \, .
\end{align}

In fact, we have
\begin{align}\nonumber
& 
N^{2\alpha + 2 \sigma}
\sum_{N_1, \ldots, N_5}
\sum_{|n| \sim N_1}
 \sum_{R_{n}(n_1, \ldots, n_5)} 
\prod_{j=1, \ldots, 5} \frac{1}{\lb n_j \rb^{1+ 2\alpha}} 
\\ \nonumber
&\lesssim   N^{2\alpha + 2 \sigma}
\sum_{N_1, \ldots, N_5} 
\#R(n_1, \ldots, n_5)N_1^{-(1+ 2\alpha)}N_2^{-(1+ 2\alpha)}N_3^{-(1+ 2\alpha)}N_4^{-(1+ 2\alpha)}N_5^{-(1+ 2\alpha)}
\end{align}
and since  $\#R(n_1, \ldots, n_5)\lesssim N_5N_4N_3N_2$ and $\sigma < \frac12$, the estimate \eqref{FINE} is proved.

\subsection{The Cubic NLS Equation on $\R^d$ ($d=1,2$) with Random Data (Theorem \ref{MainTHM3})}\label{NLSon R^2}

\

We prove Theorem \ref{MainTHM3}. Given $f \in H^{s}(\R^d)$ with $s >0$, we are considering the randomized initial 
data $f^{\omega}$ defined in 
\eqref{eq:RdRand}. Remember that these functions are typically more integrable than $f$. They are $\mathbb{P}$-almost surely in $L^{p}$ for any $p \in [2, \infty)$.
On the other hand, they are not more regular than $f$, but they rather have comparable $H^s$ norms; see \eqref{HsRandomizNormOnR}.
We approximate the equation (here $\mathcal N(z) = \pm |z|^2z$) as in \eqref{TruncatedNLS}, for all $N \in 2^{\mathbb{N}} \cup \{ \infty \}$, and 
$\Phi^{N}_t f^{\omega}$ denotes the associated flow, with initial datum $f^{\omega}$. 

\subsubsection*{Proof of Theorem \ref{MainTHM3}}
Notice that \eqref{r2prob-lin2} is the content of Proposition \ref{RandomLinCOnv2}. To prove \eqref{r2prob-nonlin}, proceeding exactly as in the proof 
of 
Theorem \ref{MainTHM2},
it suffices that given any $\delta >0$ sufficiently small we prove
\begin{equation}\label{NonlinearMaxEstOnR}
\lim_{N \to \infty} \left\| \sup_{0 \leq t \leq \delta} | \Phi_{t} f^{\omega} (x) - \Phi^{N}_{t} f^{\omega} (x) | \right\|_{L^{2}_x(\T^2)} = 0
\end{equation}  
for all $f^{\omega}$ outside a $\delta$--exceptional set $A_\delta$. This can be done using Corollary \ref{uuu} exactly as in the proof of \eqref{NonlinearProbMaxEst}
using Proposition \ref{Bourgain1/2}. In fact, since Corollary \ref{uuu} is a deterministic statement, we can actually prove \eqref{NonlinearMaxEstOnR} 
for all $\omega$.  
To do so we need to require (compare with \eqref{CompareWithThisS})
\begin{equation}\label{CompareWithThisSBis} 
s_{\R^d} = \frac{d}{2(d+1)} < s + \sigma  ,
\qquad 
d=1, 2 \, .
\end{equation}
Since we used Corollary \ref{uuu}, we are allowed to take $\sigma < \min (2s, 1)$ and we see that \eqref{CompareWithThisSBis} 
is satisfied for all $s > \frac{d}{6(d+1)}$. This completes the proof.

\hfill $\Box$


\begin{thebibliography}{10}

\bibitem{MR2754999}
J.~A. Barcel\'{o}, J.~Bennett, A.~Carbery, and K.~M. Rogers.
\newblock On the dimension of divergence sets of dispersive equations.
\newblock {\em Math. Ann.}, 349(3):599--622, 2011.

\bibitem{BOP15}
A.~B\'{e}nyi, T.~Oh, and O.~Pocovnicu.
\newblock Wiener randomization on unbounded domains and an application to
  almost sure well-posedness of {NLS}.
\newblock In {\em Excursions in harmonic analysis. {V}ol. 4}, Appl. Numer.
  Harmon. Anal., pages 3--25. Birkh\"{a}user/Springer, Cham, 2015.

\bibitem{MR1194782}
J.~Bourgain.
\newblock A remark on {S}chr\"{o}dinger operators.
\newblock {\em Israel J. Math.}, 77(1-2):1--16, 1992.

\bibitem{MR1209299}
J.~Bourgain.
\newblock Fourier transform restriction phenomena for certain lattice subsets
  and applications to nonlinear evolution equations. {I}. {S}chr\"{o}dinger
  equations.
\newblock {\em Geom. Funct. Anal.}, 3(2):107--156, 1993.

\bibitem{bourgain1996invariant}
J.~Bourgain.
\newblock Invariant measures for the 2d-defocusing nonlinear Schr{\"o}dinger
  equation.
\newblock {\em Communications in Mathematical Physics}, 176(2):421--445, Mar
  1996.

\bibitem{Bourgain2013}
J.~Bourgain.
\newblock On the {S}chr\"{o}dinger maximal function in higher dimension.
\newblock {\em Tr. Mat. Inst. Steklova}, 280(Ortogonal'nye Ryady, Teoriya
  Priblizheni\u{\i} i Smezhnye Voprosy):53--66, 2013.

\bibitem{Bourgain2016}
J.~Bourgain.
\newblock A note on the {S}chr\"{o}dinger maximal function.
\newblock {\em J. Anal. Math.}, 130:393--396, 2016.

\bibitem{BourgainDemeter}
J.~Bourgain and C.~Demeter.
\newblock The proof of the {$l^2$} decoupling conjecture.
\newblock {\em Ann. of Math. (2)}, 182(1):351--389, 2015.

\bibitem{BurqTzvet}
N.~Burq and N.~Tzvetkov.
\newblock Random data {C}auchy theory for supercritical wave equations. {I}.
  {L}ocal theory.
\newblock {\em Invent. Math.}, 173(3):449--475, 2008.

\bibitem{MR848141}
A.~Carbery.
\newblock Radial {F}ourier multipliers and associated maximal functions.
\newblock In {\em Recent progress in {F}ourier analysis ({E}l {E}scorial,
  1983)}, volume 111 of {\em North-Holland Math. Stud.}, pages 49--56.
  North-Holland, Amsterdam, 1985.

\bibitem{Carleson}
L.~Carleson.
\newblock Some analytic problems related to statistical mechanics.
\newblock In {\em Euclidean harmonic analysis ({P}roc. {S}em., {U}niv.
  {M}aryland, {C}ollege {P}ark, {M}d., 1979)}, volume 779 of {\em Lecture Notes
  in Math.}, pages 5--45. Springer, Berlin, 1980.

\bibitem{MR1828607}
J.~E. Colliander, J.-M. Delort, C.~E. Kenig, and G.~Staffilani.
\newblock Bilinear estimates and applications to 2{D} {NLS}.
\newblock {\em Trans. Amer. Math. Soc.}, 353(8):3307--3325, 2001.

\bibitem{Comp2013}
E.~Compaan.
\newblock A smoothing estimate for the nonlinear {S}chr\"odinger equation. 
\newblock UIUC Research Experiences for Graduate Students report, 2013.

\bibitem{MR729347}
M.~G. Cowling.
\newblock Pointwise behavior of solutions to {S}chr\"{o}dinger equations.
\newblock In {\em Harmonic analysis ({C}ortona, 1982)}, volume 992 of {\em
  Lecture Notes in Math.}, pages 83--90. Springer, Berlin, 1983.

\bibitem{DahlbergKenig}
B.~E.~J. Dahlberg and C.~E. Kenig.
\newblock A note on the almost everywhere behavior of solutions to the
  {S}chr\"{o}dinger equation.
\newblock In {\em Harmonic analysis ({M}inneapolis, {M}inn., 1981)}, volume 908
  of {\em Lecture Notes in Math.}, pages 205--209. Springer, Berlin-New York,
  1982.

\bibitem{1608.07640}
C.~Demeter and S.~Guo.
\newblock Schr\"odinger maximal function estimates via the pseudoconformal
  transformation, 2016.

\bibitem{MR3947642}
Y.~Deng and P.~Germain.
\newblock Growth of solutions to {NLS} on irrational tori.
\newblock {\em Int. Math. Res. Not. IMRN}, (9):2919--2950, 2019.

\bibitem{Du2019}
Du and Zhang.
\newblock Sharp $l^2$ estimates of the Schr\"{o}dinger maximal function in
  higher dimensions.
\newblock {\em Ann. of Math.}, 189(3):837, 2019.

\bibitem{DGL}
X.~Du, L.~Guth, and X.~Li.
\newblock A sharp {S}chr\"{o}dinger maximal estimate in {$\Bbb R^2$}.
\newblock {\em Ann. of Math.}, 186(2):607--640, 2017.

\bibitem{MR3842310}
X.~Du, L.~Guth, X.~Li, and R.~Zhang.
\newblock Pointwise convergence of {S}chr\"{o}dinger solutions and multilinear
  refined {S}trichartz estimates.
\newblock {\em Forum Math. Sigma}, 6:e14, 18, 2018.

\bibitem{ErdoganTzirakis}
M.~Erdo\u{g}an and N.~Tzirakis.
\newblock Talbot effect for the cubic non-linear {S}chr\"{o}dinger equation on
  the torus.
\newblock {\em Mathematical Research Letters}, 20, 03 2013.

\bibitem{ETBook}
M.~B. Erdoğan and N.~Tzirakis.
\newblock Dispersive Partial Differential Equations: Wellposedness and
  Applications.
\newblock London Mathematical Society Student Texts. Cambridge University
  Press, 2016.

\bibitem{MR3518561}
G.~Genovese, R.~Luc\`a, and D.~Valeri.
\newblock Gibbs measures associated to the integrals of motion of the periodic
  derivative nonlinear {S}chr\"{o}dinger equation.
\newblock {\em Selecta Math. (N.S.)}, 22(3):1663--1702, 2016.

\bibitem{MR1383498}
T.~Kato.
\newblock On nonlinear {S}chr\"{o}dinger equations. {II}. {$H^s$}-solutions and
  unconditional well-posedness.
\newblock {\em J. Anal. Math.}, 67:281--306, 1995.

\bibitem{MR1357398}
C.~E. Kenig, G.~Ponce, and L.~Vega.
\newblock Quadratic forms for the {$1$}-{D} semilinear {S}chr\"{o}dinger
  equation.
\newblock {\em Trans. Amer. Math. Soc.}, 348(8):3323--3353, 1996.

\bibitem{MR3512894}
R.~Killip and M.~Vi\c{s}an.
\newblock Scale invariant {S}trichartz estimates on tori and applications.
\newblock {\em Math. Res. Lett.}, 23(2):445--472, 2016.

\bibitem{MR2264734}
S.~Lee.
\newblock On pointwise convergence of the solutions to {S}chr\"{o}dinger
  equations in {$\Bbb R^2$}.
\newblock {\em Int. Math. Res. Not.}, pages Art. ID 32597, 21, 2006.

\bibitem{Luc2018}
R.~Luc{\`{a}} and K.~Rogers.
\newblock Average decay of the fourier transform of measures with applications.
\newblock {\em Journal of the European Mathematical Society}, 21(2):465--506,
  Oct. 2018.

\bibitem{MR3613507}
R.~Luc\`a and K.~M. Rogers.
\newblock Coherence on fractals versus pointwise convergence for the
  {S}chr\"{o}dinger equation.
\newblock {\em Comm. Math. Phys.}, 351(1):341--359, 2017.

\bibitem{MR3903115}
R.~Luc\`a and K.~M. Rogers.
\newblock A note on pointwise convergence for the {S}chr\"{o}dinger equation.
\newblock {\em Math. Proc. Cambridge Philos. Soc.}, 166(2):209--218, 2019.

\bibitem{LuhrMend}
J.~L\"{u}hrmann and D.~Mendelson.
\newblock Random data {C}auchy theory for nonlinear wave equations of
  power-type on {$\Bbb {R}^3$}.
\newblock {\em Comm. Partial Differential Equations}, 39(12):2262--2283, 2014.

\bibitem{MR1671214}
A.~Moyua, A.~Vargas, and L.~Vega.
\newblock Restriction theorems and maximal operators related to oscillatory
  integrals in {$\bold R^3$}.
\newblock {\em Duke Math. J.}, 96(3):547--574, 1999.

\bibitem{MR2409184}
A.~Moyua and L.~Vega.
\newblock Bounds for the maximal function associated to periodic solutions of
  one-dimensional dispersive equations.
\newblock {\em Bull. Lond. Math. Soc.}, 40(1):117--128, 2008.

\bibitem{NS15}
A.~R. Nahmod and G.~Staffilani.
\newblock Almost sure well-posedness for the periodic 3{D} quintic nonlinear
  {S}chr\"{o}dinger equation below the energy space.
\newblock {\em J. Eur. Math. Soc. (JEMS)}, 17(7):1687--1759, 2015.

\bibitem{MR0343091}
E.~M. Niki\v{s}in.
\newblock A resonance theorem and series in eigenfunctions of the {L}aplace
  operator.
\newblock {\em Izv. Akad. Nauk SSSR Ser. Mat.}, 36:795--813, 1972.

\bibitem{OhOkamPoco}
T.~Oh, M.~Okamoto, and O.~Pocovnicu.
\newblock On the probabilistic well-posedness of the nonlinear {S}chr\"odinger
  equations with non-algebraic nonlinearities.
\newblock {\em arXiv preprint arXiv:1708.01568}, 2017.

\bibitem{MR2337486}
K.~M. Rogers and P.~Villarroya.
\newblock Global estimates for the {S}chr\"{o}dinger maximal operator.
\newblock {\em Ann. Acad. Sci. Fenn. Math.}, 32(2):425--435, 2007.

\bibitem{MR997967}
P.~Sj\"{o}gren and P.~Sj\"{o}lin.
\newblock Convergence properties for the time-dependent {S}chr\"{o}dinger
  equation.
\newblock {\em Ann. Acad. Sci. Fenn. Ser. A I Math.}, 14(1):13--25, 1989.

\bibitem{MR904948}
P.~Sj\"{o}lin.
\newblock Regularity of solutions to the {S}chr\"{o}dinger equation.
\newblock {\em Duke Math. J.}, 55(3):699--715, 1987.

\bibitem{10.2307/1970308}
E.~M. Stein.
\newblock On limits of sequences of operators.
\newblock {\em Ann. of Math.}, 74(1):140--170, 1961.

\bibitem{MR512086}
R.~S. Strichartz.
\newblock Restrictions of {F}ourier transforms to quadratic surfaces and decay
  of solutions of wave equations.
\newblock {\em Duke Math. J.}, 44(3):705--714, 1977.

\bibitem{MR1854113}
T.~Tao.
\newblock Multilinear weighted convolution of {$L^2$}-functions, and
  applications to nonlinear dispersive equations.
\newblock {\em Amer. J. Math.}, 123(5):839--908, 2001.

\bibitem{MR2033842}
T.~Tao.
\newblock A sharp bilinear restrictions estimate for paraboloids.
\newblock {\em Geom. Funct. Anal.}, 13(6):1359--1384, 2003.

\bibitem{tao2006nonlinear}
T.~Tao and C.~B. of~the Mathematical~Sciences.
\newblock {\em Nonlinear Dispersive Equations: Local and Global Analysis}.
\newblock Conference Board of the Mathematical Sciences. Regional conference
  series in mathematics. American Mathematical Society, 2006.

\bibitem{MR1748920}
T.~Tao and A.~Vargas.
\newblock A bilinear approach to cone multipliers. {I}. {R}estriction
  estimates.
\newblock {\em Geom. Funct. Anal.}, 10(1):185--215, 2000.

\bibitem{MR1748921}
T.~Tao and A.~Vargas.
\newblock A bilinear approach to cone multipliers. {II}. {A}pplications.
\newblock {\em Geom. Funct. Anal.}, 10(1):216--258, 2000.

\bibitem{Vega}
L.~Vega.
\newblock Schr\"{o}dinger equations: pointwise convergence to the initial data.
\newblock {\em Proc. Amer. Math. Soc.}, 102(4):874--878, 1988.

\bibitem{WangZhang}
X.~Wang and C.~Zhang.
\newblock Pointwise convergence of solutions to the schrödinger equation on
  manifolds.
\newblock {\em Canadian Journal of Mathematics}, 71(4):983–--995, 2019.

\bibitem{ZhangFang}
T.~Zhang and D.~Fang.
\newblock Random data {C}auchy theory for the generalized incompressible
  {N}avier-{S}tokes equations.
\newblock {\em J. Math. Fluid Mech.}, 14(2):311--324, 2012.

\end{thebibliography}

\end{document}